\newcommand\supp{\mathrm{supp}}
\newtheorem{theoreme}{Theorem}[section] %
\newtheorem{proposition}[theoreme]{Proposition} %
\newtheorem{corollary}[theoreme]{Corollary} %
\newtheorem{lemme}[theoreme]{Lemma} %
\newtheorem{definition}{Definition}[section] %
\newtheorem{remark}[theoreme]{Remark} %
\newtheorem{remarque}[theoreme]{Remark} %
\newcommand\mk{\medskip}
\newcommand\sk{\smallskip}
\newcommand\N {\mathbb{N}}
\newcommand\R{\mathbb{R}} 
\newcommand\dimm{\underline{\dim}_H}
\author{Edouard Daviaud}
\newcommand\ep{\varepsilon}
\renewcommand\widering[1]{\ring{#1}}
\begin{document}
\title[Extraction of sequences of balls, and applications]{Extraction of  optimal subsequences of sequence of balls, and application to optimality estimates of  mass transference principles}
\maketitle
\begin{abstract}
In this article, we prove that from any sequence of balls whose
associated limsup set has full $\mu$-measure, one can extract a well-distributed
subsequence of balls. From this, we deduce the optimality of various lower bounds
for the Hausdorff dimension of limsup sets of balls obtained by mass transference
principles. One also gives a version of Borel-Cantelli lemma suitable for limsup sets of balls of full measure.
\end{abstract}
\section{Introduction}

Investigating Hausdorff dimensions of sets of points approximable at certain ``speed rate'' by a given sequence of points $(x_n)_{n\in\mathbb{N}}$  is an important topic  in Diophantine approximation (see \cite{BV} and \cite{BS2} among other references), in dynamical systems \cite{HV,LS,PR} and in multifractal analysis \cite{Ja,BS3,BS4}. These studies consists in general, knowing that $\mu(\limsup_{n\rightarrow+\infty}B_n:=B(x_n ,r_n))=1$ for a certain measure $\mu$ and a sequence of radius $(r_n)_{n\in\mathbb{N}}$,  in investigating the Hausdorff dimension of $\limsup_{n\rightarrow+\infty}U_n$ where  $U_n\subset B_n $. Typically  $U_n$ is a contracted ball inside $B_n$, but recently, general sets $U_n$ have been considered  \cite{KR,rect,ED1}.The so-called ubiquity theorems or mass transference principles mainly focus on finding a lower bound, using an adequate measure $\mu$, for the Hausdorff dimension (or Hausdorff measure) of those sets. However it is key in many situations to understand whether this lower bound is optimal or not. This article is dedicated to this problem.

 \sk

In order to do so, we introduce, given a probability measure $\mu$ on $\mathbb{R}^d$, the concept of $\mu$-asymptotically covering sequence of balls. This notion is a generalization of a covering property used in the $KGB$ Lemma stated in \cite{BV}.
For a sequence $(B_n)_{n\in\mathbb{N}}$, verifying this condition will be proved to be almost equivalent to verify $\mu(\limsup_{n \rightarrow+\infty}B_n)=1 $ (it is equivalent if the measure is doubling for instance, so that working under this settings is very reasonable).
 
As said above,  given a sequence of balls $(B_n)_{n\in\mathbb{N}}$ and another $(U_n \subset B_n)_{n\in\mathbb{N}}$, ubiquity theorems or mass transference principles give lower bounds for the Hausdorff dimension of $\limsup_{n\rightarrow+\infty}U_n$ when, roughly speaking, some information is known about the geometry of $ \limsup_{n\rightarrow+\infty}B_n.$ Of course, there is no reason in general for a lower-bound for $\dim_H (\limsup_{n\rightarrow+\infty}U_n)$ obtained only knowing that $\mu(\limsup_{n\rightarrow+\infty}B_n )=1$, to be sharp (i.e $=\dim_H (\limsup_{n\rightarrow+\infty}U_n)$). If one hopes such a lower-bound to be accurate,  the measure $\mu$ has to be particularly adapted, in some sense, to the pair of sequences  $((B_n)_{n\in\mathbb{N}},(U_n)_{n\in\mathbb{N}}).$ The approach adopted  in this article is to extract some sub-sequences from $(B_n)_{n\in\mathbb{N}}$ which are still $\mu$-a.c (or still verifies $\mu(\limsup_{n\rightarrow+\infty}B_n)=1$) but are adapted to the measure $\mu$. Applying mass transference principles (which are proved only for measures presenting enough self-similarity) to those sub-sequences, it will be proved that Hausdorff dimension of the limsup set associated with the corresponding $U_n$'s is given by the lower-bounds found in \cite{ED3} (so that it is also the case for the lower-bounds given in \cite{ED1, BS2, rect}).

This shows that those lower-bounds are sharp in a strong sens: for any self-similar measure $\mu$, any $\mu$-a.c sequence $(B_n)_{n\in\mathbb{N}}$, if one only considers the balls that are relevant for the measure $\mu$, the limsup set obtained by considering the sub-sequence of the  corresponding $U_n$'s ($(U_n)_{n\in\mathbb{N}}$ being also the sequence of sets involved in the articles mentioned)  has the expected dimension.

More precisely, it will be proven first that, under those very weak condition over a $\mu$-a.c  sequence of balls $(B_n)_{n\in \mathbb{N}}$, it is always possible to extract a sub-sequence $(B_{\phi(n)})_{n\in\mathbb{N}}$, still $\mu$-ac, "weakly redundant" (see Definition \ref{wr}) and such that the balls $(B_{\phi(n)})_{n\in\mathbb{N}}$ have prescribed behavior with respect to the measure $\mu$, roughly meaning (see Theorem \ref{theoremextra}) that the balls $(B_{\phi(n)})_{n\in\mathbb{N}}$ satisfies 
\begin{equation}
\label{roughcondi}
\vert B_{\phi(n)} \vert^{\overline{\dim}_P (\mu)} \lessapprox \mu(B_{\phi(n)})\lessapprox\vert B_{\phi(n)} \vert^{\underline{\dim}_H (\mu)}.
\end{equation}
In a second time, it will be proved that, for weakly redundant sequences  satisfying \eqref{roughcondi},  the Hausdorff dimension of $\limsup$ set associated with any sequence of shrunk balls or very thin rectangles $(R_n \subset B_n)_{n\in\mathbb{N}}$ (see Theorem \ref{rectssmajo}) can be bounded by above precisely by the lower-bound given in \cite{ED3}, which proves the optimality of those bounds.

\section{Notation and definition}
Let $d$ $\in\mathbb{N}$. \smallskip

For $n\in\mathbb{N}$, the set of dyadic cubes of generation $n$ of $\mathbb{R}^d$ is denoted $\mathcal{D}_n (\mathbb{R}^d)$ and defined as $\mathcal{D}_n (\mathbb{R}^d) = \left\{\prod_{i=1}^d [\frac{k_i}{2^{n}},\frac{k_{i}+1}{2^{n}})\right\}_{(k_i)_{i\in\mathbb{Z}}\in\mathbb{Z}^{\mathbb{Z}}}.$

For $x\in\mathbb{R}^{d}$, $r>0$,  $B(x,r)$ stands for the closed ball of ($\mathbb{R}^{d}$,$\vert\vert \ \ \vert\vert_{\infty}$) of center $x$ and radius $r$.
 Given a ball $B$, $\vert B\vert$ is the diameter of $B$.
 
For $t\geq 0$, $\delta\in\mathbb{R}$ and $B:=B(x,r)$,   $t B$ stand for $B(x,t r)$, i.e. the ball with same center as $B$ and radius multiplied by $t$,   and the  $\delta$-contracted  $B^{\delta}$ is  defined by $B^{\delta}=B(x ,r^{\delta})$.
\smallskip

Given a set $E\subset \mathbb{R}^d$, $\overset{\circ}{E}$ stands for the  interior of the $E$, $\overline{E}$ its  closure and $\partial E$ is the boundary of $E$, i.e, $\partial E =\overline{E}\setminus \overset{\circ}{E}.$
\smallskip

The $\sigma$-algebra of  Borel sets of $\mathbb{R}^d$ is denoted by  $\mathcal{B}(\mathbb{R}^d)$,
$\mathcal{L}^d$ is the Lebesgue measure on $\mathcal{B}(\mathbb{R}^d)$ and
$\mathcal{M}(\mathbb{R}^d)$ stands for the set of Borel probability measure over $\mathbb{R}^d.$
\smallskip

For $\mu \in\mathcal{M}(\mathbb{R}^d)$,   $\supp(\mu):=\left\{x: \ \forall r>0, \ \mu(B(x,r))>0\right\}$ is the topological support of $\mu.$
\smallskip

 Given $E\in\mathcal{B}(\mathbb{R}^d)$, $\dim_{H}(E)$ and $\dim_{P}(E)$ denote respectively  the Hausdorff   and the packing dimension of $E$.
\smallskip

\subsection{Definition and recalls}
\begin{definition}
\label{hausgau}
Let $\zeta :\mathbb{R}^{+}\mapsto\mathbb{R}^+$ be an increasing mapping  verifying $\zeta (0)=0$. The Hausdorff measure at scale $t\in(0,+\infty)$ associated with $\zeta$ of a set $E$ is defined by 
\begin{equation}
\label{gaug}
\mathcal{H}^{\zeta}_t (E)=\inf \left\{\sum_{n\in\mathbb{N}}\zeta (\vert B_n\vert) : \ (B_n)_{n\in \N}  \mbox{ closed  balls, }\vert B_n \vert \leq t \text{ and } E\subset \bigcup_{n\in \mathbb{N}}B_n\right\}.
\end{equation}
The Hausdorff measure associated with $\zeta$ of a set $E$ is defined by 
\begin{equation}
\mathcal{H}^{\zeta} (E)=\lim_{t\to 0^+}\mathcal{H}^{\zeta}_t (E).
\end{equation}
\end{definition}

For $t\in (0,+\infty)$, $s\geq 0$ and $\zeta:x\to x^s$, one simply uses the usual notation $\mathcal{H}^{\zeta}_t (E)=\mathcal{H}^{s}_t (E)$ and $\mathcal{H}^{\zeta} (E)=\mathcal{H}^{s} (E).$ In particular, the $s$-dimensional Hausdorff outer measure at scale $t\in(0,+\infty]$  of the set $E$ is defined by
\begin{equation}
\label{hcont}
\mathcal{H}^{s}_{t}(E)=\inf \left\{\sum_{n\in\mathbb{N}}\vert B_n\vert^s : \ (B_n)_{n\in \N}  \mbox{ closed  balls, }\vert B_n \vert \leq t \text{ and } E\subset \bigcup_{n\in \mathbb{N}}B_n\right\}. 
\end{equation}

\begin{definition} 
\label{dim}
Let $\mu\in\mathcal{M}(\mathbb{R}^d)$.  
For $x\in \supp(\mu)$, the lower and upper  local dimensions of $\mu$ at $x$ are   
\begin{align*}
&\underline\dim (\mu,x)=\liminf_{r\rightarrow 0^{+}}\frac{\log\mu(B(x,r))}{\log r} \\
 \mbox{ and } \ \ \ \  & \overline\dim (\mu,x)=\limsup_{r\rightarrow 0^{+}}\frac{\log \mu(B(x,r))}{\log r}.
 \end{align*}
Then, the lower and upper dimensions of $\mu$  are defined by 
\begin{equation}
\label{dimmu}
\dimm(\mu)=\mbox{infess}_{\mu}(\underline\dim (\mu,x))  \ \ \mbox{ and } \ \ \overline{\dim}_P (\mu)=\mbox{supess}_{\mu}(\overline\dim (\mu,x)).
\end{equation}
\end{definition}

It is known that  (for more details see \cite{F})
$$\dimm(\mu)=\inf_{E\in\mathcal{B}(\mathbb{R}^d):\ \mu(E)>0}\dim_{H}(E)  \ \ \ \mbox{ and } \ \ \overline{\dim}_P (\mu)=\inf_{E\in\mathcal{B}(\mathbb{R}^d):\ \mu(E)=1}\dim_{P}(E).$$ 

A measure verifying $\dimm(\mu)=\overline{\dim}_P (\mu):=\alpha$ will be called an $\alpha$ exact dimensional measure. From Definition \ref{dim}, such measures verify, for $\mu$-almost every $x\in \mathbb{R}^d$, $\lim_{r\rightarrow 0^{+}}\frac{\log \mu(B(x,r))}{\log r}=\alpha.$

\subsection{Main statements}

Before stating the Theorems proved in this article, one starts by recalling the following definition, introduced in \cite{BS}. 

\begin{definition}
\label{wr}
Let $\mathcal{B}=( B _n =:B(x_n ,r_n))_{n\in\mathbb{N}}$ be a family of balls in $\mathbb{R}^d$. Denote by $\mathcal{T}_k(\mathcal{B})=\left\{B_n \ :  \   2^{-k-1}< r_{n}\leq 2^{-k}\right\}.$ 
The family $\mathcal{B}$ is said to be weakly redundant when for all $k$, there exists an integer $J_k$ and $\mathcal{T}_{k,1}(\mathcal{B}),..,\mathcal{T}_{k,J_k}(\mathcal{B})$ a partition of $\mathcal{T}_k(\mathcal{B})$ such that:\medskip
\begin{itemize}
 \item[\textbf{$(C_1)$}] $\mathcal{T}_k(\mathcal{B})=\bigcup_{1\leq j\leq J_k} T_{k,j}(\mathcal{B}),$\medskip
  \item[\textbf{$(C_2)$}] For every $1\leq j\leq J_k$ and every pair of balls $B\neq B^{\prime}\in \mathcal{T}_{k,j}(\mathcal{B})$, 
  $B\cap B^{\prime}=\emptyset,$\medskip
  \item[\textbf{$(C_3 )$}] $\lim_{k\rightarrow +\infty}\frac{\log_{2}(J_k)}{k}=0.$
  \end{itemize}
  \end{definition}

So, a sequence of balls $(B_n)_{n\in\mathbb{N}}$ is  weakly redundant  when   at each scale  $2^{-k}$, the balls of the family $\left\{B_n \right\}_{n\in\mathbb{N}}$ that have radii $\approx 2^{-k}$ can be sorted in  a relatively small number of families of pairwise disjoint balls.

The main  property we introduce  for a sequence of balls $\mathcal{B}=(B_n)_{n\in\mathbb{N}}$  is meant to ensure  that any set can be covered   efficiently  by the limsup of the $B_n$'s, with respect to a measure $\mu$.  This property  is  a general version of the KGB Lemma of  Beresnevitch and Velani, stated in \cite{BV}, using a Borel probability measure $\mu$. Such properties (like the KGB Lemma) are   usually   key   (cf \cite{Ja,BV,BS} for instance) to prove ubiquity or mass transference results.
\begin{definition} 
\label{ac}
Let   $\mu\in \mathcal{M}(\mathbb{R}^d)$. The  sequence $\mathcal{B}= (B_n)_{n\in\mathbb{N}}$ of balls of $\mathbb{R}^d$   is said to be $\mu$-asymptotically covering (in short,  $\mu$-a.c) when  there exists  a constant $C>0$ such that for every open set $\Omega\subset \mathbb{R}^d $ and $g\in\mathbb{N}$, there is an integer  $N_\Omega \in\mathbb{N}$ as well  as $g\leq n_1 \leq ...\leq n_{N_\Omega}$ such that: 
\begin{itemize}
\item $\forall \, 1\leq i\leq N_\Omega$, $B_{n_i}\subset \Omega,$\medskip
\item $\forall \, 1\leq i\neq j\leq N_\Omega$, $B_{n_i}\cap B_{n_j}=\emptyset,$\medskip
\item  one has
\begin{equation}
\label{majac}
\mu\left(\bigcup_{1\leq i\leq N_\Omega}B_{n_i} \right)\geq C\mu(\Omega).
\end{equation}
\end{itemize}
\end{definition}

In other words, for any open set $\Omega$ and any $g>0$, there exists a finite set of disjoint balls of $\left\{B_n\right\}_{n\geq g}$ covering a large part of $\Omega$ from the $\mu$-standpoint.

This notion of $\mu$-asymptotically covering  is related to the way the balls of $\mathcal{B}$ are distributed according to the measure $\mu$. In particular, given a measure $\mu$, this property  is  slightly stronger  than being of $\limsup$ of full $\mu$-measure, as illustrated by the following Theorem.

\begin{theoreme} 
\label{equiac}
Let   $\mu\in\mathcal{M}(\mathbb{R}^d)$  and $\mathcal{B} =(B_n :=B(x_n ,r_n))_{n\in\mathbb{N}}$ be a sequence of balls of  $\mathbb{R}^d$ with $\lim_{n\to +\infty} r_{n}= 0$.

\begin{enumerate}
\smallskip
\item
If $\mathcal{B} $ is $\mu$-a.c, then $\mu(\limsup_{n\rightarrow+\infty}B_n)=1.$
\smallskip
\item
 If there exists $v<1$ such that $ \mu \big(\limsup_{n\rightarrow+\infty}(v B_n) \big)=1$, then $\mathcal{B} $ is $\mu$-a.c. 
\end{enumerate}

\end{theoreme}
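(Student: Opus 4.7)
The statement splits into two implications; the plan is to prove (1) by contradiction using outer regularity of $\mu$, and (2) by a Besicovitch-type extraction from the family $\mathcal{B}$ itself.

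For (1), I would set $L=\limsup_n B_n$ and assume $\mu(L^c)>0$ for contradiction. By outer regularity of the Borel probability measure $\mu$, for any $\varepsilon>0$ there is an open set $\Omega\supset L^c$ with $\mu(\Omega\setminus L^c)<\varepsilon$. Applying the $\mu$-a.c.\ hypothesis to $\Omega$ and any fixed $g\in\mathbb{N}$ yields disjoint balls $B_{n_1},\dots,B_{n_N}\subset \Omega$ with $n_i\geq g$ and $\mu(\bigcup_i B_{n_i})\geq C\mu(\Omega)\geq C\mu(L^c)$. Since $\bigcup_i B_{n_i}\subset \bigcup_{n\geq g}B_n$, intersecting with $L^c$ yields $\mu(L^c\cap\bigcup_{n\geq g}B_n)\geq C\mu(L^c)-\varepsilon$. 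But $L^c\cap \bigcup_{n\geq g}B_n$ decreases to $L^c\cap L=\emptyset$ as $g\to\infty$, so its $\mu$-mass tends to zero; letting $g\to\infty$ and then $\varepsilon\to 0$ contradicts $\mu(L^c)>0$.

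For (2), given an open $\Omega$ with $\mu(\Omega)>0$ and $g\in\mathbb{N}$, I would localize to $\Omega_\eta=\{x:d(x,\Omega^c)>\eta\}$ with $\eta$ small enough that $\mu(\Omega_\eta)>\mu(\Omega)/2$, and pick $g'\geq g$ with $r_n<\eta/(1+v)$ for $n\geq g'$. The triangle inequality shows that for such $n$, the condition $vB_n\cap \Omega_\eta\neq \emptyset$ forces $B_n\subset \Omega$. Since $\mu(\limsup vB_n)=1$, the set $E:=\Omega_\eta\cap \limsup_{n\geq g'}vB_n$ has mass $\mu(\Omega_\eta)$, and every point of $E$ is quasi-centered (lies in $vB_n$) in infinitely many balls of $\mathcal{F}:=\{B_n:n\geq g',\ vB_n\cap \Omega_\eta\neq \emptyset\}$, all of which are contained in $\Omega$. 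A Besicovitch-type extraction adapted to the contraction factor $v<1$ then decomposes $\mathcal{F}$ into $c=c(d,v)$ sub-families of pairwise disjoint balls whose union still covers $\mu$-a.e.\ of $E$; pigeonholing, one such family $\{B_{n_i}\}$ has $\mu(\bigcup_i B_{n_i})\geq \mu(E)/c\geq \mu(\Omega)/(2c)$. Truncating to a finite sub-family yields the $\mu$-a.c.\ property with constant $C=(2c)^{-1}$, up to an arbitrarily small loss.

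The main technical obstacle is this Besicovitch-type decomposition. Disjointness of the smaller balls $vB_n$ does not imply disjointness of the $B_n$'s across scales: nested configurations $B_m\subset B_n$ with $vB_m\cap vB_n=\emptyset$ are possible as soon as $r_m<\frac{1-v}{1+v}r_n$. The standard remedy is to work one dyadic scale $r_n\in(2^{-k-1},2^{-k}]$ at a time, where a volume-packing estimate bounds by $M(d,v)$ the multiplicity of $\{B_n\}$ at that scale arising from disjoint $\{vB_n\}$, after which a greedy coloring extracts a disjoint sub-family of fixed proportion; a Vitali--Besicovitch argument on the centered balls $B(x,(1-v)r_{n_x})\subset B_{n_x}$ for $x\in E$ combines the scales. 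The strict inequality $v<1$ is essential throughout and exactly reflects the gap between being $\mu$-a.c.\ and having a $\limsup$ of full $\mu$-measure for general non-doubling $\mu$, as discussed in the introduction.
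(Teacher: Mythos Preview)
Your approach is correct on both parts, with Part~(1) taking a genuinely different route and Part~(2) converging to the paper's argument.

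For Part~(1), the paper does not argue by contradiction. Instead it first proves an iterated exhaustion lemma (Lemma~\ref{covO}): from the $\mu$-a.c.\ property one repeatedly covers the uncovered part of any open set, obtaining for each $g$ a countable disjoint family in $\{B_n\}_{n\geq g}$ of full $\mu$-measure in $\Omega$. Applying this with $\Omega=\mathbb{R}^d$ gives $\mu(\bigcup_{n\geq g}B_n)=1$ for every $g$, hence $\mu(\limsup B_n)=1$. Your contradiction argument via outer regularity is shorter and avoids the exhaustion lemma entirely, though the paper needs Lemma~\ref{covO} elsewhere anyway.

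For Part~(2), your main paragraph proposes to decompose $\mathcal{F}=\{B_n:n\geq g',\ vB_n\cap\Omega_\eta\neq\emptyset\}$ into boundedly many disjoint subfamilies, which as you note cannot work: these balls are not centered on $E$, and one cannot control their overall multiplicity. Your ``remedy'' paragraph then lands exactly on the paper's method. The paper skips the localization $\Omega_\eta$ (it simply requires, for each $y\in E$, that $B(y,2r_{n_y})\subset\Omega$, which is possible since $r_n\to 0$) and applies a modified Besicovitch lemma (Proposition~\ref{besimodi}) directly to the centered family $\{B(y,(1-v)r_{n_y}):y\in E\}$ with parameter $v'=\tfrac{1-v}{2}$. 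This yields $Q_{d,v'}$ subfamilies in which the $\tfrac{1}{v'}$-dilates are disjoint; since $B_{n_y}\subset B(y,(1+v)r_{n_y})\subset B(y,2r_{n_y})=\tfrac{1}{v'}B(y,(1-v)r_{n_y})$, disjointness of the dilates forces disjointness of the corresponding $B_{n_y}$'s, and pigeonholing finishes. The dyadic/greedy-coloring step you describe is precisely how the paper proves Proposition~\ref{besimodi} (via Lemma~\ref{fami}).
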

Moreover, it results from the proof of the KGB-Lemma \cite{BV} that if the $\mu$ is doubling, $\mu\Big(\limsup_{n\rightarrow+\infty}B_n \Big)=1 \Leftrightarrow (B_n)_{n \in\mathbb{N}}$ is $\mu$-a.c.

\sk 

An interesting consequences of Theorem \ref{equiac} is the following version of Borel-Cantelli lemma which is extends the case where $\mu$ is doubling, established \cite{BVBC}.
\begin{proposition}
Let $(B_n)_{n\in\mathbb{N}}$ be a sequence of closed balls satisfying $\vert B_n \vert\to 0$ and $ \mu\in\mathcal{M}(\mathbb{R}^d)$ be a probability measure. 
\begin{itemize}
\item[(A):] Assume that $(B_n)_{n\in\mathbb{N}}$ is $\mu$-a.c, then, there exists $C>1$ such that for any open ball $B$, there exists a sub-sequence of $(B_n)_{n\in\mathbb{N}}$,  $(L_{n,B})_{n\in\mathbb{N}}$ satisfying, for any $n\in\mathbb{N}$, $L_{B,n} \subset B$ and 
\begin{equation}
\sum_{n \geq 0}\mu(L_{B,n})=+\infty 
\end{equation}
and for infinitely many Q,
\begin{equation}
\sum_{s,t=1}^{Q}\mu(L_{B,s}\cap L_{B,t})\leq \frac{C}{\mu(B)}\left(\sum_{n=1}^{Q}\mu(L_{B,n})\right)^2.
\end{equation}
\item[(B):] Assume that there exists $C>1$ such that for any open ball $B$, there exists a sub-sequence of $(B_n)_{n\in\mathbb{N}}$ $(L_{n,B})_{n\in\mathbb{N}}$ with, for any $n\in\mathbb{N},$ $L_{n,B}\subset B$, satisfying  \eqref{eq1} and \eqref{eq2}, then $\mu(\limsup_{n\rightarrow+\infty}B_n)=1,$ so that, for any $\kappa >1,$ $(\kappa B_n)_{n\in\mathbb{N}}$ is $\mu$-a.c.
\end{itemize} 
\end{proposition}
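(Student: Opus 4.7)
For part (A), I would construct $(L_{B,n})_{n\in\mathbb{N}}$ by concatenating disjoint finite sub-families produced by iterated applications of the $\mu$-asymptotic covering property. Apply Definition \ref{ac} to $\Omega=B$ with an increasing sequence of thresholds $g_1<g_2<\cdots$, each $g_{k+1}$ chosen larger than every index used so far; this produces finite families $\mathcal{F}_k\subset\{B_n\}_{n\geq g_k}$ of pairwise disjoint balls contained in $B$ with $\mu(\bigcup\mathcal{F}_k)\geq C\mu(B)$, where $C$ is the constant of Definition \ref{ac}. The concatenation yields a sub-sequence $(L_{B,n})$ of $(B_n)$ with $L_{B,n}\subset B$, and since each block contributes $\mu$-mass $\geq C\mu(B)$, the divergence $\sum_n\mu(L_{B,n})=+\infty$ is immediate.

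For the second-moment bound, set $f_k=\sum_{L\in\mathcal{F}_k}\mathbf{1}_L=\mathbf{1}_{\bigcup\mathcal{F}_k}\leq\mathbf{1}_B$ (using the within-family disjointness) and $Q_K=|\mathcal{F}_1|+\cdots+|\mathcal{F}_K|$. Since $f_k\leq\mathbf{1}_B$, we have $\int f_k f_\ell\, d\mu\leq\mu(B)$ for all $k,\ell$, so
\[
\sum_{s,t=1}^{Q_K}\mu(L_{B,s}\cap L_{B,t})=\int\Bigl(\sum_{k=1}^K f_k\Bigr)^2 d\mu\leq K^2\mu(B),
\]
while $\sum_{n=1}^{Q_K}\mu(L_{B,n})=\sum_{k=1}^K\int f_k\,d\mu\geq KC\mu(B)$. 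Squaring the second inequality and dividing through shows the quadratic bound holds with universal constant $1/C^2$ along the infinite sub-sequence $(Q_K)_{K\geq 1}$, completing (A).

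For part (B), I would invoke a Paley--Zygmund / Kochen--Stone estimate. Letting $X_Q=\sum_{n=1}^Q\mathbf{1}_{L_{B,n}}$, Cauchy--Schwarz applied to $X_Q=X_Q\mathbf{1}_{\{X_Q>0\}}$ yields
\[
\mu\Bigl(\bigcup_{n=1}^Q L_{B,n}\Bigr)=\mu(\{X_Q>0\})\geq\frac{\bigl(\int X_Q\, d\mu\bigr)^2}{\int X_Q^2\, d\mu}\geq\frac{\mu(B)}{C}
\]
whenever $Q$ lies in the (infinite) set of indices for which the hypothesis holds. To upgrade from $\bigcup_n$ to $\limsup_n$, I re-run the argument on the tail sequence $(L_{B,n})_{n\geq N}$ for each fixed $N$: discarding finitely many terms preserves divergence of the linear sum, and the quadratic control persists after losing at most a factor $4$ (because for sufficiently large $Q$ in the prescribed sub-sequence the head mass $\int X_{N-1}\,d\mu$ is at most half of $\int X_Q\,d\mu$). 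Hence $\mu(\bigcup_{n\geq N}L_{B,n})\geq\mu(B)/(4C)$ for every $N$, and letting $N\to\infty$ gives $\mu(\limsup_n L_{B,n})\geq\mu(B)/(4C)$. Since $(L_{B,n})$ is a sub-sequence of $(B_n)$, one obtains $\mu(\limsup_n B_n\cap B)\geq\mu(B)/(4C)$ for every open ball $B$.

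This uniform local density lower bound, combined with the Lebesgue--Besicovitch differentiation theorem (valid for any finite Borel measure on $\mathbb{R}^d$ via the Besicovitch covering theorem), forces $\mu(\limsup_n B_n)=1$: a $\mu$-density point of the complement would contradict the bound. Finally, Theorem \ref{equiac}(2) applied to the sequence $(\kappa B_n)_{n\in\mathbb{N}}$ with $v=1/\kappa<1$ (noting $v\cdot\kappa B_n=B_n$) immediately yields that $(\kappa B_n)_{n\in\mathbb{N}}$ is $\mu$-a.c, completing (B). The main obstacle I anticipate is the passage from $\bigcup_n$ to $\limsup_n$ in the Paley--Zygmund step: the hypothesis only controls the full sub-sequence, so one must carefully re-run the second-moment argument on each tail and track how the constant degrades, which is where the quadratic (rather than linear) dependence in the hypothesis becomes essential.
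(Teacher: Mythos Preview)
Your proposal is correct and follows essentially the same route as the paper: for (A) you construct the sub-sequence by iterating the $\mu$-a.c.\ property inside $B$ (the paper simply cites \cite{BVBC} for this), and for (B) you use a Kochen--Stone/Paley--Zygmund second-moment bound to obtain $\mu(\limsup_n B_n\cap B)\geq c\,\mu(B)$, then a Besicovitch-based density argument to upgrade this to $\mu(\limsup_n B_n)=1$, which is exactly the content of the paper's Lemma~\ref{muboule}. The only cosmetic difference is that the paper invokes the Kochen--Stone inequality (via \cite{BVBC}) to get the $\limsup$ bound directly with constant $1/C$, whereas your tail-truncation argument yields $1/(4C)$; either constant suffices.
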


\sk

One now states the main result about extraction of sub-sequences of balls of this article.

\begin{theoreme}
\label{theoremextra}
Let $\mu \in\mathcal{M}(\mathbb{R}^d)$ Let $(B_n)_{n\in\mathbb{N}}$ be a sequence of balls of $\mathbb{R}^d .$
\begin{enumerate}
\item If $(B_n)_{n\in\mathbb{N}}$ is $\mu$-a.c, then there exists a $\mu$-a.c sub-sequence $(B_{\phi(n)})_{n\in\mathbb{N}}$ which is weakly redundant.\medskip
\item If there exists $v<1$ such that $\mu(\limsup_{n\rightarrow+\infty}vB_n)=1$, then there exists a $\mu$-a.c sub-sequence $(B_{\phi(n)})_{n\in\mathbb{N}}$ verifying
\begin{equation}
\label{condimesu}
  \underline{\dim}_H (\mu)\leq \liminf_{n\rightarrow+\infty}\frac{\log \mu(B_{\phi(n)})}{\log\vert B_{\phi(n)}\vert}\leq \limsup_{n\rightarrow+\infty}\frac{\log \mu(B_{\phi(n)})}{\log \vert B_{\phi(n)}\vert}\leq \overline{\dim}_P (\mu).
  \end{equation}
\end{enumerate}
\end{theoreme}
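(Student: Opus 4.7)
The two items will be established by independent extractions from $(B_n)$, each tailored to the required adaptation to $\mu$.

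\emph{Part (1).} My plan is a ``partition-by-scale'' construction on coarse dyadic grids. Pick an increasing function $n(k)\to \infty$ (say $n(k)=\lfloor k/2\rfloor$) and integers $g_k$ with $r_n\le 2^{-k}$ whenever $n\ge g_k$. After a generic translation of the dyadic grid one may assume $\mu(\partial Q)=0$ for every cube $Q$. For each pair $(k,Q)$ with $Q\in\mathcal{D}_{n(k)}$ and $\mu(\overset{\circ}{Q})>0$, apply the $\mu$-asymptotically covering property of $(B_n)$ with $\Omega=\overset{\circ}{Q}$ and cutoff $g_k$ to obtain a finite pairwise disjoint family $\mathcal{C}_{Q,k}\subset (B_n)_{n\ge g_k}$ of balls contained in $\overset{\circ}{Q}$ with $\mu\big(\bigcup\mathcal{C}_{Q,k}\big)\ge C\mu(\overset{\circ}{Q})$, and take $(B_{\phi(n)})_{n\in\mathbb{N}}$ to be the enumeration of $\bigcup_{k,Q}\mathcal{C}_{Q,k}$. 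For $\Omega$ open and $g\in\mathbb{N}$, pick $k$ so large that $g_k\ge g$ and the cubes $Q\in\mathcal{D}_{n(k)}$ with $Q\subset\Omega$ cover at least $\tfrac{1}{2}\mu(\Omega)$ (possible by inner regularity since $n(k)\to\infty$); then $\bigcup_{Q\subset\Omega}\mathcal{C}_{Q,k}$ is finite, pairwise disjoint (within each $Q$, and across $Q$'s because the open cubes are disjoint), lies in $\Omega$, has indices $\ge g$, and covers $\ge(C/2)\mu(\Omega)$, proving $\mu$-a.c.\ of the sub-sequence. The weak redundancy is immediate: balls of $\mathcal{C}_{Q,k}$ have radius $\le 2^{-k}$, so balls of scale $\ell$ in the sub-sequence originate only from $k\le \ell$, and for each such $k$ the whole $\bigcup_Q\mathcal{C}_{Q,k}$ is a single disjoint family; hence $J_\ell\le \ell+1$ and $\log_2 J_\ell/\ell\to 0$.

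\emph{Part (2).} The strategy is to retain only balls that meet a $\mu$-typical set at a well-matched scale. Fix $\epsilon_j\downarrow 0$ and define
\[
G_N^{\epsilon_j}=\big\{x:\ r^{\overline{\dim}_P(\mu)+\epsilon_j}\le \mu(B(x,r))\le r^{\underline{\dim}_H(\mu)-\epsilon_j}\text{ for all }0<r\le 2^{-N}\big\}.
\]
The $\mu$-essential inf/sup characterizations of $\underline{\dim}_H(\mu)$ and $\overline{\dim}_P(\mu)$ give $\mu(\bigcup_N G_N^{\epsilon_j})=1$ for every $j$; choose $N_j$ with $\mu(G_{N_j}^{\epsilon_j})\ge 1-2^{-j}$, so that by Borel--Cantelli the set $H=\bigcup_J\bigcap_{j\ge J}G_{N_j}^{\epsilon_j}$ has $\mu(H)=1$. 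Put $J(n)=\max\{j\ge 1:\ N_j\le -\log_2((1+v)r_n)\}$, which tends to $\infty$ as $n\to\infty$, and keep the index $n$ if $vB_n\cap G_{N_{J(n)}}^{\epsilon_{J(n)}}\ne\emptyset$. Since any $x\in H$ lies in $G_{N_j}^{\epsilon_j}$ for all $j$ beyond some $J_0(x)$ and, by hypothesis, in $vB_n$ for infinitely many $n$, all such $n$ with $J(n)\ge J_0(x)$ are kept; hence $\mu(\limsup vB_{\phi(n)})\ge \mu(H)=1$, and Theorem~\ref{equiac}(2) yields $\mu$-a.c.\ of the sub-sequence. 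For a kept index $n$, choose $x\in vB_n\cap G_{N_{J(n)}}^{\epsilon_{J(n)}}$; then $B(x,(1-v)r_n)\subset B_n\subset B(x,(1+v)r_n)$, and since $(1+v)r_n\le 2^{-N_{J(n)}}$ the defining inequalities of $G_{N_{J(n)}}^{\epsilon_{J(n)}}$ apply and give
\[
((1-v)r_n)^{\overline{\dim}_P(\mu)+\epsilon_{J(n)}}\le \mu(B_n)\le ((1+v)r_n)^{\underline{\dim}_H(\mu)-\epsilon_{J(n)}}.
\]
Taking logarithms, dividing by $\log|B_{\phi(n)}|$, and using $\epsilon_{J(n)}\to 0$ together with $\log(1\pm v)/\log|B_{\phi(n)}|\to 0$ yields \eqref{condimesu}.

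\emph{Main obstacle.} The delicate point is the calibration of the diagonal scales. In (1) the partition level $n(k)$ needs only tend to infinity (for inner regularity to exhaust any $\Omega$), while the weak-redundancy count is controlled \emph{automatically} by grouping the disjoint families per $k$; the minor nuisance that $\mu$ may charge dyadic boundaries is eliminated by an arbitrarily small translation of the grid. In (2) the index $J(n)$ must diverge in order to drive $\epsilon_{J(n)}\to 0$ and produce the dimension bounds, while the very definition of $J(n)$ automatically enforces the scale-matching condition $(1+v)r_n\le 2^{-N_{J(n)}}$ needed to invoke the defining inequalities of the good sets.
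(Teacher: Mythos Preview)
Your proof is correct. For Part~(1) the underlying idea coincides with the paper's (Proposition~\ref{extrawr}): at each scale $k$ produce one pairwise disjoint family of balls of radius $\le 2^{-k}$, so that $\mathcal T_\ell$ meets at most $\ell+1$ such families and $J_\ell\le\ell+1$. The paper obtains each family by applying its covering Lemma~\ref{covO} to $\Omega=\mathbb{R}^d$ with cutoff $g_k$, yielding a full-measure disjoint cover; you instead apply the raw $\mu$-a.c.\ property (Definition~\ref{ac}) on open dyadic cubes and take the union over cubes at level $n(k)$. Your route is slightly more elementary (no iteration as inside Lemma~\ref{covO}) at the cost of the grid-translation trick to kill $\mu$-mass on cube boundaries.

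For Part~(2) the approaches differ substantially. The paper first proves two separate extraction lemmas (Lemmas~\ref{gscainf} and~\ref{gscaup}), one for each inequality in~\eqref{condimesu}, and then combines them through a rather elaborate recursive covering of $\mathbb{R}^d$ with a diagonal choice $\varepsilon_i\downarrow 0$ (Proposition~\ref{mainextraprop}). Your argument is a single pass: one nested family of two-sided good sets $G_N^{\epsilon_j}$, Borel--Cantelli to obtain a full-measure $H=\liminf_j G_{N_j}^{\epsilon_j}$, and retention of exactly those $n$ for which $vB_n$ meets the good set at the matched scale $J(n)$. This is shorter and exploits the hypothesis $\mu(\limsup vB_n)=1$ directly via Theorem~\ref{equiac}(2); by contrast the paper's Lemma~\ref{gscainf} uses only $\mu$-a.c., so it yields the left inequality of~\eqref{condimesu} under a weaker hypothesis. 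One small point to tighten: for $J(n)=\max\{j:N_j\le -\log_2((1+v)r_n)\}$ to be a well-defined finite maximum and to diverge you need $N_j\to\infty$; this is harmless since $G_N^{\epsilon}$ is increasing in $N$, so one may replace $N_j$ by $\max(N_1,\dots,N_j,j)$ without affecting the measure estimate.
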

\begin{remarque}

  Theorem \ref{theoremextra} implies in particular that if the sequence of balls  $(B_n)_{n \in\mathbb{N}}$ verifies $\mu( \limsup_{n\rightarrow+\infty}vB_n)=1$, for some $v<1$,  it is possible to extract a $\mu$-a.c sub-sequence verifying items $(1)$ and $(2)$. 

\end{remarque}

\bigskip

Section \ref{secequiac} and Section \ref{sec-extrac} are respectively dedicated to the proof of Theorem \ref{equiac} and Theorem \ref{theoremextra}. 

Section \ref{sec-exam} provides some explicit applications of Theorem \ref{theoremextra}. 

In the last section, Section \ref{sec-upper}, one proves Theorem \ref{majoss} and some final remark are given about Corollary \ref{equass1}.  

\subsection{Application to the study of the optimality of lower-bounds in obtained via mass transference principles}
\subsubsection{An upper-bound for ubiquity Theorem in the self-similar case}

In this section, one shows how the previous extraction theorem can be used to investigate optimal bounds in inhomogeneous mass transference principles. Let us recall first the definition of the following geometric quantity, introduced in \cite{ED3}.
\begin{definition} 
\label{mucont}
{Let $\mu \in\mathcal{M}(\R^d)$, and $s\geq 0$.
The $s$-dimensional $\mu$-essential Hausdorff content at scale $t\in(0,+\infty]$ of a set $A\subset \mathcal B(\R^d)$ is defined as}
{\begin{equation}
\label{eqmucont}
 \mathcal{H}^{\mu,s}_{t}(A)=\inf\left\{\mathcal{H}^{s}_{t}(E): \ E\subset  A , \ \mu(E)=\mu(A)\right\}.
 \end{equation}}
\end{definition}

Let us also recall the definition of a self-similar measure.

%

\begin{definition}
\label{def-ssmu}  A self-similar IFS is a family $S=\left\{f_i\right\}_{i=1}^m$ of $m\ge 2$ contracting similarities  of $\mathbb{R}^d$. 

Let $(p_i)_{i=1,...,m}\in (0,1)^m$ be a positive probability vector, i.e. $p_1+\cdots +p_m=1$.

The self-similar measure $\mu$ associated with $ \left\{f_i\right\}_{i=1}^m$  and $(p_i)_{i=1}^m$ is the unique probability measure such that 
\begin{equation}
\label{equass}
\mu=\sum_{i=1}^m p_i \mu \circ f_i^{-1}.
\end{equation}

The topological support of $\mu$ is the attractor of $S$, that is the unique non-empty compact set $K\subset X$ such that  $K=\bigcup_{i=1}^m f_i(K)$.

\end{definition}

The existence  and uniqueness of $K$ and $\mu$ are standard results \cite{Hutchinson}. Recall that due to a result by Feng and Hu \cite{FH} any self-similar measure is exact dimensional. 

Our goal is to investigate whether the lower-bound given by the following theorem, proved in \cite{ED3}, is sharp.

\begin{theoreme}
\label{zzani}
Let $\mu\in\mathcal{M}(\R^d)$ be a self-similar measure and $\mathcal{B}= (B_n)_{n\in\mathbb{N}}$ be a   $\mu$-a.c. sequence of  closed balls of~$\R^d$ centered in $\supp (\mu)$.
Let $\mathcal{U}=(U_n)_{n\in\mathbb{N}}$ be a sequence of open sets such that 
$U_n \subset B_n$ for all $n\in \mathbb N$, and $0\leq s\leq \dim(\mu)$. If, for every  $n\in\mathbb{N}$ large enough, $\mathcal{H}^{\mu,s}_{\infty}(U_n)\geq \mu(B_n) $, then 
$$\dim_{H}(\limsup_{n\rightarrow +\infty}U_n)\geq s .
$$

\end{theoreme}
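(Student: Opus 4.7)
The plan is to prove, for every $t < s$, that $\mathcal{H}^t(\limsup_n U_n) > 0$ by constructing a generalized Cantor subset of $\limsup_n U_n$ carrying a Frostman measure at exponent $t$, and then invoking the mass distribution principle; letting $t \nearrow s$ yields the desired bound.

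I would begin by applying Theorem \ref{theoremextra} to extract from $\mathcal{B}$ a sub-sequence $(B_{\phi(n)})$ that remains $\mu$-a.c., is weakly redundant, and satisfies the two-sided control \eqref{condimesu}. Since Feng--Hu's theorem ensures $\mu$ is exact dimensional with common value $\alpha:=\dim(\mu)$, this upgrades to $\mu(B_{\phi(n)}) = |B_{\phi(n)}|^{\alpha + o(1)}$. Then I would build inductively nested families of pairwise disjoint open balls $\mathcal{F}_0, \mathcal{F}_1, \ldots$ with diameters shrinking to $0$, such that every $C \in \mathcal{F}_{k+1}$ lies inside some $U_n$ with $B_n \in \mathcal{F}_k$; this ensures that the Cantor-like intersection $K = \bigcap_k \bigcup_{C \in \mathcal{F}_k} \overline{C}$ lies in $\limsup_n U_n$. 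The induction step, performed inside each $U_n$ with $B_n \in \mathcal{F}_k$, proceeds in two moves: apply the $\mu$-a.c.~property to the open set $U_n$ (with tail index $g$ forcing small scales) to obtain pairwise disjoint $B_{m_1}, \ldots, B_{m_L} \subset U_n$ satisfying $\sum_j \mu(B_{m_j}) \geq C_0\, \mu(U_n)$; then, for each $j$, use the hypothesis $\mathcal{H}^{\mu,s}_\infty(U_{m_j}) \geq \mu(B_{m_j})$ together with Frostman's lemma to produce a compact $E_{m_j} \subset U_{m_j}$ carrying a Borel measure of total mass $\gtrsim \mu(B_{m_j})$ with Frostman bound $\leq c\, r^s$ on every ball $B(x,r)$. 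Cover each $E_{m_j}$ by small open balls of the extracted sub-sequence to form $\mathcal{F}_{k+1}$, and distribute mass proportionally.

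The principal obstacle lies in the bookkeeping of constants through the iteration: the Frostman constant, the $\mu$-a.c.~constant $C_0$, and the $o(1)$ slack coming from $\mu(B_{\phi(n)}) \asymp |B_{\phi(n)}|^{\alpha+o(1)}$ accumulate multiplicatively across stages. The role of self-similarity is precisely to make this slack sub-exponential in the stage number (via exact dimensionality), allowing the losses to be absorbed when one passes from exponent $s$ to any fixed $t < s$. A secondary technicality is ensuring strict containment $C \subset U_n$ rather than tangency on $\partial U_n$, handled by the openness of $U_n$ and of the inner balls. Once the weak-$*$ limit measure $\nu$ on $K$ is verified to satisfy $\nu(B(x,r)) \lesssim r^t$ at small scales, the mass distribution principle gives $\mathcal{H}^t(K) \geq c\, \nu(K) > 0$, whence $\dim_H(K) \geq t$, and hence $\dim_H(\limsup_n U_n) \geq t$.
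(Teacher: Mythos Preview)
The paper does not actually prove Theorem~\ref{zzani}: it is imported from the reference \cite{ED3} (see the sentence immediately preceding the statement, ``the following theorem, proved in \cite{ED3}''). So there is no in-paper proof to compare your proposal against. I can only comment on the internal coherence of your outline.

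Your general architecture (nested Cantor construction inside $\limsup_n U_n$, mass distribution, Frostman bound) is the standard MTP strategy and is certainly what one expects in \cite{ED3}. Two points, however, deserve attention.

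First, invoking Theorem~\ref{theoremextra} at the outset is misplaced. That extraction result (weak redundancy and the two-sided scaling $\mu(B_{\phi(n)})\approx|B_{\phi(n)}|^{\dim\mu}$) is the engine behind the \emph{upper} bound Theorem~\ref{majoss}; the lower bound of Theorem~\ref{zzani} needs only the raw $\mu$-a.c.\ property together with the content hypothesis. Using the extraction here is harmless but irrelevant.

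Second, and more seriously, your induction step as written has a gap. You apply the $\mu$-a.c.\ property to $U_n$ and record $\sum_j \mu(B_{m_j})\ge C_0\,\mu(U_n)$. But nothing links $\mu(U_n)$ to $\mu(B_n)$: when $U_n$ is, say, a highly shrunk ball or a thin rectangle, $\mu(U_n)$ is typically far smaller than $\mu(B_n)$, and your chain of inequalities never recovers the quantity $\mu(B_n)$ that the hypothesis provides. The Frostman measures you produce on the $E_{m_j}$ then have total mass controlled by $\mu(U_n)$, not by $\mu(B_n)$, and the iteration degenerates. The hypothesis $\mathcal H^{\mu,s}_\infty(U_n)\ge\mu(B_n)$ must enter \emph{at the level of $U_n$ itself}, not one level down. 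The clean way to use it is this: by Lemma~\ref{covO} choose the disjoint $B_{m_j}\subset U_n$ so that $\mu\big(\bigcup_j B_{m_j}\big)=\mu(U_n)$; then $\bigcup_j B_{m_j}$ is a full-$\mu$-measure subset of $U_n$, hence by the very definition of $\mathcal H^{\mu,s}_\infty$ one gets
\[
\sum_j |B_{m_j}|^s \;\ge\; \mathcal H^{s}_\infty\Big(\bigcup_j B_{m_j}\Big)\;\ge\; \mathcal H^{\mu,s}_\infty(U_n)\;\ge\;\mu(B_n).
\]
This inequality---in terms of $|B_{m_j}|^s$, not $\mu(B_{m_j})$---is what propagates through the Cantor iteration and ultimately yields the $r^s$-Frostman bound on the limit measure. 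Your write-up should make this mechanism explicit; as it stands the crucial role of the essential content is obscured.
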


One now states the main result of this section.

\begin{theoreme}
\label{majoss}
Let $\mu \in \mathcal{M}(\R^d)$ be a self-similar measure, $K$ its support and $(B_n)_{n\rightarrow+\infty}$ be a weakly redundant sequence of balls of $\R^d$ verifying $\vert B_n \vert \to 0.$ Let $(U_n)_{n\in \mathbb{N}}$ be a sequence of open sets satisfying $U_n \subset B_n$. For any  $0\leq s <\dim (\mu)$ such that, for all large enough  $n\in\mathbb{N}$, $\mathcal{H}^{\mu,s}_{\infty}(U_n)\leq \mu(B_n)$, 
\begin{equation}\label{uppernound18}
\dim_H (\limsup_{n\rightarrow+\infty}U_n \cap K)\leq s.
\end{equation}

\end{theoreme}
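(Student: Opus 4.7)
The plan is to fix $\varepsilon > 0$ arbitrarily small and establish $\mathcal{H}^{s+\varepsilon}(\limsup_{n\to+\infty} U_n \cap K) = 0$; letting $\varepsilon \to 0$ then yields \eqref{uppernound18}.

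The first step is to convert the hypothesis $\mathcal{H}^{\mu,s}_\infty(U_n) \le \mu(B_n)$ into explicit coverings. For each $n$ large enough, the definition of the $\mu$-essential content furnishes a set $E_n \subset U_n$ with $\mu(E_n) = \mu(U_n)$ together with a countable cover $(L_{n,j})_{j \ge 1}$ of $E_n$ by closed balls satisfying $\sum_{j} |L_{n,j}|^s \le \mu(B_n) + 2^{-n}$ and, after truncation, $|L_{n,j}| \le |B_n|$. Stratifying indices by scale, $\mathcal{T}_k = \{n : 2^{-k-1} < r_n \le 2^{-k}\}$, the weak-redundancy decomposition $\mathcal{T}_k = \bigsqcup_{j=1}^{J_k} \mathcal{T}_{k,j}$ into families of pairwise disjoint balls (with $J_k = 2^{o(k)}$) yields the key mass bound $\sum_{n \in \mathcal{T}_k} \mu(B_n) \le J_k$. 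Fixing $K_0 \in \mathbb{N}$ and $N$ so large that $|B_n| \le 2^{-K_0+1}$ for all $n \ge N$, combining both estimates gives
\[\sum_{n \ge N} \sum_{j \ge 1} |L_{n,j}|^{s+\varepsilon} \le \sum_{n \ge N} |B_n|^\varepsilon \bigl(\mu(B_n) + 2^{-n}\bigr) \le 2^\varepsilon \sum_{k \ge K_0} 2^{-k\varepsilon} J_k + 2\cdot 2^{-N+1}.\]
Since $J_k = 2^{o(k)}$, the series $\sum_k 2^{-k\varepsilon} J_k$ converges and its tail vanishes as $K_0 \to +\infty$. As the $L_{n,j}$ have diameter at most $2^{-K_0+1}$ and cover $\bigcup_{n \ge N} E_n \supset \limsup_n E_n$, one obtains $\mathcal{H}^{s+\varepsilon}(\limsup_n E_n) = 0$, hence $\dim_H(\limsup_n E_n) \le s + \varepsilon$, and since $\varepsilon$ is arbitrary, $\dim_H(\limsup_n E_n) \le s$.

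The remaining and principal difficulty is to control the residue $(\limsup_n U_n \cap K) \setminus \limsup_n E_n$. As $\mu(U_n \setminus E_n) = 0$, this set is $\mu$-negligible, yet a priori its Hausdorff dimension could be as large as $\dim(\mu)$. Here the strict inequality $s < \dim(\mu)$ and the exact-dimensionality of self-similar measures (Feng--Hu \cite{FH}) become essential: on a $\mu$-full subset of $K$ one has $\mu(B(x,r)) \approx r^{\dim(\mu)}$, which should permit either a refinement in the choice of each $E_n$ so that $(U_n \cap K) \setminus E_n$ has Hausdorff dimension strictly less than $s$, or the construction of a separate cover of the $\mu$-null residue whose $(s+\varepsilon)$-cost can be absorbed into the estimate above. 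Combining both parts then gives $\dim_H(\limsup_n U_n \cap K) \le s$.
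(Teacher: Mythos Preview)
Your argument has a genuine gap at exactly the point you label ``the remaining and principal difficulty.'' You cover only a $\mu$-full subset $E_n\subset U_n$ and then hope to handle $(U_n\cap K)\setminus E_n$ separately, but this residue is merely $\mu$-null, and $\mu$-null subsets of $K$ can have Hausdorff dimension as large as $\dim_H K\ge\dim(\mu)>s$. Exact-dimensionality (Feng--Hu) concerns only $\mu$-typical points and says nothing about $\mu$-null sets, so neither of your proposed fixes---refining the choice of $E_n$ or covering the residue cheaply---can be carried out from the data you have. The definition of $\mathcal{H}^{\mu,s}_\infty$ alone does not control $\mathcal{H}^s_\infty(U_n\cap K)$; this is where the self-similarity of $\mu$ must do real work, and your sketch does not use it.

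The paper bypasses this issue entirely by invoking a separate structural result (Theorem~\ref{contss}, proved in \cite{ED3}): for a self-similar measure $\mu$ with support $K$ and any $0\le s<\dim(\mu)$ there exists $c=c(d,\mu,s)>0$ such that for every open set $\Omega$,
\[
c\,\mathcal{H}^s_\infty(\Omega\cap K)\le\mathcal{H}^{\mu,s}_\infty(\Omega)\le\mathcal{H}^s_\infty(\Omega\cap K).
\]
This is the missing key lemma. With it in hand one takes, for each $n$, a near-optimal cover $(A^n_k)_k$ of $U_n\cap K$ itself (not of some full-measure $E_n$) by balls of diameter at most $|B_n|$ with
\[
\sum_k |A^n_k|^s\le 2\,\mathcal{H}^s_\infty(U_n\cap K)\le\frac{2}{c}\,\mathcal{H}^{\mu,s}_\infty(U_n)\le\frac{2}{c}\,\mu(B_n).
\]
Your weak-redundancy computation then applies verbatim to give $\sum_{n,k}|A^n_k|^{s+\varepsilon}\le\frac{2}{c}\sum_n|B_n|^\varepsilon\mu(B_n)<+\infty$, and since the $A^n_k$ cover $\limsup_n U_n\cap K$ directly, no residue arises.
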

Note that if, for any $n\in\mathbb{N}$, the ball $B_n$ intersects $K$, $\limsup_{n\rightarrow+\infty}U_n \cap K=\limsup_{n\rightarrow+\infty}U_n$. 

Theorem \ref{zzani} and Theorem \ref{majoss} yields the following useful corollary.
\begin{corollary}
\label{equass1}
Let $\mu \in\mathcal{M}(\mathbb{R}^d)$ be a self-similar measure. Let $(B_n)_{n\in\mathbb{N}}$ be a weakly redundant $\mu$-a.c sequence of balls satisfying $\vert B_n \vert \to 0$ and $B_n \cap K \neq \emptyset$ for any $n\in\mathbb{N}.$ 

Let $(U_n)_{n\in\mathbb{N}}$ be a sequence of open sets satisfying that, for any $n\in\mathbb{N}$, $U_n \subset B_n$. 

Assume that there exists $s_0$ such that
\begin{itemize}
\item[•] for any $s<s_0$, for $n$ large enough, $\mathcal{H}^{\mu,s}_{\infty}(U_n)\geq \mu(B_n),$\mk
\item[•]  for any $s>s_0$, for $n$ large enough, $\mathcal{H}^{\mu,s}_{\infty}(U_n)\leq \mu(B_n).$\mk 
\end{itemize}

Then by Theorem \ref{zzani} and Theorem \ref{majoss}, $$\dim_H (\limsup_{n\rightarrow+\infty}U_n)=s_0 .$$
\end{corollary}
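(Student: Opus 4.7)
The plan is to obtain $\dim_H(\limsup_{n\to+\infty} U_n) = s_0$ as a matching pair of bounds, each delivered by one of the two previously stated theorems applied to a sequence of scales approaching $s_0$.

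For the lower bound, I would fix any $s<s_0$. By the first hypothesis, $\mathcal{H}^{\mu,s}_\infty(U_n)\geq \mu(B_n)$ for all sufficiently large $n$, and the sequence $(B_n)_{n\in\mathbb{N}}$ is $\mu$-a.c.\ with all balls centered near (in fact intersecting) $K=\supp(\mu)$; since $\mu$ is self-similar, Theorem~\ref{zzani} applies and yields $\dim_H(\limsup_{n\to+\infty} U_n)\geq s$. Letting $s\nearrow s_0$ gives $\dim_H(\limsup_{n\to+\infty} U_n)\geq s_0$.

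For the upper bound, I would fix any $s>s_0$ (with $s<\dim(\mu)$, which is the only case of interest since nothing to prove when $s\geq \dim(\mu)\geq \dim_H(\limsup U_n)$). The second hypothesis gives $\mathcal{H}^{\mu,s}_\infty(U_n)\leq \mu(B_n)$ eventually, and $(B_n)_{n\in\mathbb{N}}$ is weakly redundant with $|B_n|\to 0$, so Theorem~\ref{majoss} delivers $\dim_H(\limsup_{n\to+\infty} U_n \cap K)\leq s$. To remove the intersection with $K$, I would invoke the remark after Theorem~\ref{majoss}: since every $B_n$ meets $K$, every $x\in\limsup U_n$ lies in $U_n\subset B_n$ for infinitely many $n$, and each such $B_n$ contains some point of $K$ at distance $\leq |B_n|\to 0$ from $x$; closedness of $K$ then forces $x\in K$. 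Hence $\limsup U_n\subset K$ and the two limsup sets coincide, giving $\dim_H(\limsup_{n\to+\infty} U_n)\leq s$. Letting $s\searrow s_0$ closes the argument.

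There is essentially no obstacle: the corollary is a direct combination of the two main theorems, and the only subtlety is the short topological observation that $\limsup U_n\subset K$ under the assumption $B_n\cap K\neq\emptyset$, which is already flagged in the paragraph immediately following Theorem~\ref{majoss}.
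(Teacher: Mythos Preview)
Your proposal is correct and follows exactly the paper's approach: the corollary is stated as an immediate consequence of Theorem~\ref{zzani} (lower bound) and Theorem~\ref{majoss} (upper bound), together with the observation---recorded right after Theorem~\ref{majoss}---that $B_n\cap K\neq\emptyset$ for all $n$ forces $\limsup_{n\to+\infty}U_n\subset K$. The paper gives no further argument, and your write-up simply spells out the routine limiting steps in $s$.
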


\begin{remark}
\label{remajo}
It is easily seen from the proof that the condition $\mathcal{H}^{\mu,s}_{\infty}(U_n)\leq \mu(B_n)$ in Theorem \ref{majoss} can be weakened into $\liminf_{n\rightarrow+\infty}\frac{\log \mathcal{H}^{\mu,s}_{\infty}(U_n)}{\log \mu(B_n)}\geq 1$.

\end{remark}

\subsubsection{Application in the case of balls and rectangles}

We can now show in which sense, in view of Theorem~\ref{theoremextra},   Theorem~\ref{zzani} is sharp by applying Corollary \ref{equass1} to the specific cases where the sets $U_n$ are balls or rectangles.    

\begin{corollary}\label{sharp}
Let $\mu \in\mathcal{M}(\R^d)$  be a self-similar measure of support $K$ and $\mathcal{B}=(B_n)_{n\in\mathbb{N}}$ be a  sequence of balls centered in $K$  satisfying $\vert B_n\vert \to 0 $ and $\mu\Big( \limsup_{n\rightarrow+\infty}B_n\Big)=1.$ Then \cite{ED3},
$$\dim_H (\limsup_{n\rightarrow+\infty}B_n ^{\delta})\geq \frac{\dim(\mu)}{\delta}.
$$
Assume furthermore that $\mathcal{B} $ is weakly redundant and  $\limsup_{n\rightarrow+\infty}\frac{\log \mu(B_n )}{\log(\vert B_n \vert)}=\dim (\mu)$,  then for every $\delta\geq 1$, 
$$\dim_H (\limsup_{n\rightarrow+\infty}B_n ^{\delta})=\frac{\dim(\mu)}{\delta}.
$$
\end{corollary}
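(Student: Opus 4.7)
The plan is to obtain the lower bound $\dim_H(\limsup_n B_n^{\delta})\geq \dim(\mu)/\delta$ from \cite{ED3} (it is the first displayed inequality) and to reduce the matching upper bound to Theorem \ref{majoss} equipped with Remark \ref{remajo}, applied to $U_n:=B_n^{\delta}\subset B_n$. Before invoking the theorem I would observe that since each $B_n$ is centered in the closed set $K$ and $|B_n^{\delta}|=2r_n^{\delta}\to 0$, any $x\in\limsup_{n\to+\infty} B_n^{\delta}$ is a limit of centers $x_{n_k}\in K$, so $\limsup_{n\to+\infty} B_n^{\delta}\subset K$; hence the bound that Theorem \ref{majoss} produces on $\dim_H(\limsup_n B_n^{\delta}\cap K)$ is exactly the bound we need.

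Fix $s\in(\dim(\mu)/\delta,\,\dim(\mu))$, a non-empty interval for $\delta>1$. By Remark \ref{remajo}, the conclusion $\dim_H(\limsup_n B_n^{\delta})\leq s$ follows once I check that $\liminf_n \log\mathcal{H}^{\mu,s}_{\infty}(B_n^{\delta})/\log\mu(B_n)\geq 1$. The single-ball cover immediately yields $\mathcal{H}^{\mu,s}_{\infty}(B_n^{\delta})\leq |B_n^{\delta}|^s=(2r_n^{\delta})^s$, while the hypothesis $\limsup_n \log\mu(B_n)/\log|B_n|=\dim(\mu)$ gives, for any $\epsilon>0$ and all large $n$, the Frostman-type lower bound $\mu(B_n)\geq |B_n|^{\dim(\mu)+\epsilon}$. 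Combining these two estimates, taking logarithms (and carefully tracking signs, since both $\log|B_n|$ and $\log\mu(B_n)$ are negative), and using $\log r_n/\log|B_n|\to 1$, one would obtain
\[
\liminf_{n\to+\infty}\frac{\log\mathcal{H}^{\mu,s}_{\infty}(B_n^{\delta})}{\log\mu(B_n)}\;\geq\;\frac{\delta s}{\dim(\mu)+\epsilon}.
\]
Since $\delta s>\dim(\mu)$ by the choice of $s$, one can pick $\epsilon$ small enough that the right-hand side exceeds $1$; Theorem \ref{majoss} then yields $\dim_H(\limsup_n B_n^{\delta})\leq s$, and letting $s\downarrow \dim(\mu)/\delta$ closes the upper bound.

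The edge case $\delta=1$ cannot use Theorem \ref{majoss} verbatim (the interval for $s$ is empty), and I would handle it by a direct covering argument in the spirit of the proof of Theorem \ref{majoss}: for $s>\dim(\mu)$, weak redundancy partitions $\mathcal{T}_k(\mathcal{B})$ into $J_k=2^{o(k)}$ families of pairwise disjoint balls, on each of which the Frostman bound $\mu(B)\geq|B|^{\dim(\mu)+\epsilon}$ and disjointness give $\sum |B|^{\dim(\mu)+\epsilon}\leq 1$, whence $\sum_{B\in\mathcal{T}_k}|B|^s\leq C J_k 2^{-k(s-\dim(\mu)-\epsilon)}$; summing over $k\geq K$ shows $\mathcal{H}^{s}_{\infty}(\bigcup_{n\geq N} B_n)\to 0$, hence $\mathcal{H}^s(\limsup_n B_n)=0$ for every $s>\dim(\mu)$. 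The main (and essentially only) obstacle I anticipate is bookkeeping---keeping the direction of inequalities correct when dividing by negative logarithms, and aligning the Frostman exponent $\dim(\mu)+\epsilon$ against $\delta s$ so as to finish.
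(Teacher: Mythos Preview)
Your proof is correct and follows essentially the paper's route, which simply invokes Corollary~\ref{equass1} (hence Theorem~\ref{majoss}) with $s_0=\dim(\mu)/\delta$; the detour through Remark~\ref{remajo} is not needed, since your own estimates already give $\mathcal{H}^{\mu,s}_\infty(B_n^\delta)\le |B_n^\delta|^s \le \mu(B_n)$ directly for all large $n$ once $\delta s>\dim(\mu)+\epsilon$. Your explicit handling of the edge case $\delta=1$ (which falls outside the range $s<\dim(\mu)$ of Theorem~\ref{majoss}) is more careful than the paper's one-line reference.
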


\begin{corollary}
 \label{rectssmajo}
 Let $\mu$ be a self-similar measure verifying that its support, $K$, is the closure of its interior. Let $1\leq \tau_1\leq...\leq \tau_d$, $\tau=(\tau_1,..., \tau_d)$ and $(B_n :=B(x_n,r_n))_{n\in\mathbb{N}}$ be a sequence of balls of $\R^d$ satisfying $r_n \to 0$, $\mu(\limsup_{n\rightarrow+\infty}B_n)=1.$  Define $R_n =\widering{R}_{\tau}(x_n,r_n),$ where $
R_{\tau}(x_n,r_n)=x_n+\prod_{i=1}^d [-\frac{1}{2}r_n^{\tau_i},\frac{1}{2}r_n^{\tau_i}].$
  Then \cite{ED3}
 \begin{equation}
 \dim_H (\limsup_{n\rightarrow+\infty}R_n)\geq \min_{1\leq i\leq d }\left\{\frac{\dim (\mu)+\sum_{1\leq j\leq i}\tau_i-\tau_j}{\tau_i}\right\}.
 \end{equation}
 Assume furthermore that $(B_n)_{n\in\mathbb{N}}$ is weakly redundant and $\lim_{n\rightarrow+\infty}\frac{\log \mu(B_n)}{\log \vert B_n \vert}=\dim (\mu)$, then 
 \begin{equation}
 \dim_H (\limsup_{n\rightarrow+\infty}R_n)= \min_{1\leq i\leq d }\left\{\frac{\dim (\mu)+\sum_{1\leq j\leq i}\tau_i-\tau_j}{\tau_i}\right\}.
 \end{equation}
 \end{corollary}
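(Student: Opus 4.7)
The plan is to prove the matching upper bound $\dim_H\limsup R_n\leq s_0$, where
\[
s_0=\min_{1\leq i\leq d}\frac{\dim(\mu)+\sum_{j=1}^{i-1}(\tau_i-\tau_j)}{\tau_i},
\]
by applying Theorem~\ref{majoss} to the sequence $U_n=R_n$; the lower bound is recalled from~\cite{ED3} and requires nothing further. To invoke Theorem~\ref{majoss} I need to verify $\mathcal{H}^{\mu,s}_\infty(R_n)\leq\mu(B_n)$ for every $s>s_0$ and all $n$ large, and then handle the part of the limsup lying outside $K$.

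Write $\alpha=\dim(\mu)$, fix $s>s_0$, and let $i^*$ realize the minimum defining $s_0$, so that the identity
\[
s_0\tau_{i^*}=\alpha+\sum_{j=1}^{i^*-1}(\tau_{i^*}-\tau_j)
\]
holds. Since $\tau_1\leq\dots\leq\tau_d$ and $r_n\to 0$, the sides of $R_n$ are $r_n^{\tau_1}\geq\dots\geq r_n^{\tau_d}$, so covering $R_n$ by a regular grid of sup-norm balls of radius $r_n^{\tau_{i^*}}$ takes at most $C\prod_{j<i^*}r_n^{\tau_j-\tau_{i^*}}$ balls: one ball suffices in each direction $j\geq i^*$, while roughly $r_n^{\tau_j-\tau_{i^*}}$ balls are needed in each direction $j<i^*$. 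Consequently,
\[
\mathcal{H}^{\mu,s}_\infty(R_n)\leq\mathcal{H}^s_\infty(R_n)\leq C\,r_n^{s\tau_{i^*}-\sum_{j<i^*}(\tau_{i^*}-\tau_j)}=C\,r_n^{\alpha+(s-s_0)\tau_{i^*}}.
\]
The mass-regularity hypothesis $\log\mu(B_n)/\log|B_n|\to\alpha$ gives $\mu(B_n)\geq r_n^{\alpha+\eta}$ for every $\eta>0$ and all $n$ large; picking $\eta=(s-s_0)\tau_{i^*}/2$ makes the factor $C\,r_n^{(s-s_0)\tau_{i^*}/2}$ at most $1$ eventually, so $\mathcal{H}^{\mu,s}_\infty(R_n)\leq\mu(B_n)$ for all large $n$.

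Since $(B_n)$ is weakly redundant and $|B_n|\to 0$, Theorem~\ref{majoss} now gives $\dim_H(\limsup R_n\cap K)\leq s$ for every $s>s_0$, hence the bound $\leq s_0$. To upgrade this to $\dim_H\limsup R_n\leq s_0$, observe that the mass-regularity hypothesis forces $\mu(B_n)>0$, hence $B_n\cap K\neq\emptyset$, for all large $n$; and if $x\notin K$ then $d(x,K)>0$, so any ball $B_n\ni x$ of radius $r_n<d(x,K)/2$ is disjoint from $K$, which by the previous sentence excludes all but finitely many $n$. Thus $x\notin\limsup R_n$ for every $x\notin K$, i.e.\ $\limsup R_n\subset K$, and the upper bound follows. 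The main obstacle is arranging the geometric covering at precisely the scale $r_n^{\tau_{i^*}}$ and matching the exponent via the identity above; the rest is a direct appeal to Theorem~\ref{majoss} together with the short density argument just given.
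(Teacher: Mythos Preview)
Your argument is correct and follows the same route as the paper: verify the content inequality $\mathcal{H}^{\mu,s}_\infty(R_n)\le\mu(B_n)$ for $s>s_0$ and then invoke Theorem~\ref{majoss}. The paper simply packages both directions through Corollary~\ref{equass1} and cites the content estimate for rectangles from \cite{ED3} (this is exactly the identity $\mathcal H^{s}_\infty(R_n)\asymp |B_n|^{g_\tau(s)}$ with $g_\tau(s)=\max_k\{s\tau_k-\sum_{j\le k}(\tau_k-\tau_j)\}$ recalled in Remark~\ref{remajo1}), whereas you carry out the covering at scale $r_n^{\tau_{i^*}}$ by hand; the two computations are the same. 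Your density step showing $\limsup R_n\subset K$ is likewise the explicit version of the paper's one-line remark following Theorem~\ref{majoss} (``if $B_n\cap K\neq\emptyset$ for all $n$ then $\limsup U_n\cap K=\limsup U_n$''): since $\mu(B_n)>0$ eventually, each such $B_n$ meets $K$, and any point of $\limsup B_n$ is a limit of points of $K$, hence lies in $K$.

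One small remark: when you write ``Theorem~\ref{majoss} now gives $\dim_H(\limsup R_n\cap K)\le s$ for every $s>s_0$'', note that Theorem~\ref{majoss} is stated for $s<\dim(\mu)$, so you are really ranging over $s\in(s_0,\dim(\mu))$; since $s_0\le\dim(\mu)/\tau_1\le\dim(\mu)$ this interval is nonempty except in the degenerate case $s_0=\dim(\mu)$, which the paper does not treat separately either.
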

%
\begin{remarque}

\begin{itemize}
\item[•] Corollaries \ref{sharp} and \ref{rectssmajo} are direct consequences of second item of Remark 5.1 and Remark 5.3 in \cite{ED3}, together with Corollary \ref{equass1} (applied to, respectively, $s_0 =\frac{\dim (\mu)}{\delta}$ and $s_0 =s(\mu,\tau)$).\mk
\item[•] Note that, by Theorem \ref{theoremextra} combined with Corollary \ref{sharp} and Corollary \ref{rectssmajo}, for any sequence of balls $(B_n)_{n\in\mathbb{N}}$ satisfying $\mu\Big(\limsup_{n\rightarrow+\infty}\frac{1}{2} B_n\Big)=1$  ($\mu$ a self-similar measure satisfying the of hypothesis of Corollaries \ref{sharp} or \ref{rectssmajo} for $\mu$), it is always possible to extract a $\mu$-a.c sub-sequence of balls so that the Hausdorff dimension of the limsup set associated with corresponding $U_n$'s is the bound stated in \cite{ED3} and recalled in those corollaries. This in particular proves that those bounds are sharp.  \medskip
\item[•]In the case of the Lebesgue measure, it is always verified that $\lim_{n\rightarrow+\infty}\frac{\log \mu(B_n)}{\log \vert B_n \vert}=\dim (\mu).$ As a consequence, the lower-bound provided by Theorem \ref{zzani} (which is established in \cite{KR}) in the case of balls or rectangles is precisely the dimension of $\limsup_{n\rightarrow+\infty}U_n$ as soon as the sequence $(B_n)$ is weakly redundant. More explicitly, given a weakly redundant sequence of balls $(B_n)_{n\in\mathbb{N}}$ of $[0,1]^d$ satisfying $\vert B_n \vert \to 0$ and $ \mathcal{L}^d (\limsup_{n\rightarrow+\infty}B_n)=1$, for any sequence rectangles associated with a vector $\tau$ as in Theorem \ref{rectssmajo}, one has

$$\dim_H ( \limsup_{n\rightarrow+\infty}R_n)= \min_{1\leq i\leq d }\left\{\frac{d+\sum_{1\leq j\leq i}\tau_i-\tau_j}{\tau_i}\right\}.$$
\end{itemize}
\end{remarque}
\medskip

Section \ref{secpequiac} is dedicated to the proof of Theorem \ref{equiac}. In the next section, Section  \ref{sec-extrac}, Theorem \ref{theoremextra} is established. Then some explicit examples of application of Theorem \ref{theoremextra} are given in Section \ref{sec-exam}. 

In the penultimate section, Section \ref{sec-upper}, Theorem \ref{majoss} is proved. 

The last section, Section \ref{conclu}, draws some conclusions and gives some perspectives about the results established in this article.

\section{Proof of Theorem \ref{equiac}}
\label{secpequiac}
\subsection{A useful modified version of Besicovitch covering Lemma}
\label{secbesi}
One focuses on a  modified version of Besicovitch's covering Lemma.

\begin{proposition} 
\label{besimodi}
For any $0<v\leq 1$, there exists $Q_{d,v} \in\mathbb{N}^{\star}$, a constant depending only on the dimension $d$ and $v$, such that for every $E\subset [0,1]^{d}$,  for every set $\mathcal{F}=\left\{B(x, r_{(x)} ): x\in E,  r_{(x)} >0 \right\}$, there exists $\mathcal{F}_1,...,\mathcal{F}_{Q_{d,v}}$ finite or countable sub-families of $\mathcal{F}$ such that:\medskip
\begin{itemize}

\item
$\forall 1\leq i\leq Q_{d,v}$, $\forall L\neq L'\in\mathcal{F}_i$, one has $\frac{1}{v}L \cap \frac{1}{v}L'=\emptyset.$\medskip

\item 
$E$ is covered by the families $\mathcal{F}_i$, i.e.
\begin{equation}\label{besi}
 E\subset  \bigcup_{1\leq i\leq Q_{d,v}}\bigcup_{L\in \mathcal{F}_i}L.
 \end{equation}
\end{itemize}
\end{proposition}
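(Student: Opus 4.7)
The plan is to prove this modified Besicovitch covering lemma by adapting the classical greedy selection argument of Besicovitch, with the constant $Q_{d,v}$ arising from a packing estimate that accounts for the strengthened $(1/v)$-disjointness requirement.

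I would begin by well-ordering the balls of $\mathcal{F}$, grouping them dyadically by scale via $\mathcal{F}^{(k)} = \{B(x, r_{(x)}) \in \mathcal{F} : r_{(x)} \in (2^{-k-1}, 2^{-k}]\}$ and processing the scale classes in increasing $k$, i.e. from largest to smallest radius, choosing any order within each class. Running a greedy assignment, I would attempt to place each incoming ball $L$ into the lowest-indexed family $\mathcal{F}_i$ (with $i \leq Q_{d,v}$) such that no previously assigned $L' \in \mathcal{F}_i$ satisfies $\frac{1}{v}L \cap \frac{1}{v}L' \neq \emptyset$. Since every $x \in E$ is the center of its own ball $B(x, r_{(x)}) \in \mathcal{F}$, which the algorithm processes and assigns to some $\mathcal{F}_i$, the covering property $E \subset \bigcup_{i} \bigcup_{L \in \mathcal{F}_i} L$ is automatic.

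The heart of the argument is the following geometric multiplicity bound: given any ball $L = B(x, r)$, the cardinality of any collection of balls $L'_j = B(x'_j, r'_j)$ satisfying (i) $r'_j \geq r/2$, (ii) $\frac{1}{v}L \cap \frac{1}{v}L'_j \neq \emptyset$, and (iii) the $(1/v)L'_j$'s pairwise disjoint, is at most some $Q_{d,v}$. I would prove this by a volume-packing computation: condition (ii) forces $|x'_j - x| \leq (r + r'_j)/v \leq 3 r'_j/v$, placing the centers in a ball of radius $O(r'_j/v)$ around $x$; condition (iii) imposes the separation $|x'_j - x'_k| \geq (r'_j + r'_k)/v$ on the centers; and a comparison of the total volume of the (mutually disjoint) dilated balls against the volume of this enclosing ball yields a bound depending only on $d$ and $v$.

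The main obstacle is establishing this sharpened multiplicity bound uniformly over all admissible configurations, since the intersection constraint combined with pairwise $(1/v)$-disjointness of balls of possibly varying radii demands a careful geometric argument (in particular, the dyadic scale decomposition must be exploited to handle the case when the $r'_j$'s vary widely). Once the bound is in hand, the greedy procedure is guaranteed to have an available family at each step, $(1/v)$-disjointness holds inside each $\mathcal{F}_i$ by construction, and the covering of $E$ follows automatically from the fact that each $x \in E$ has been treated as the center of some processed ball.
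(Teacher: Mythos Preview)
There is a genuine gap in your greedy-assignment strategy. You propose to process \emph{every} ball of $\mathcal{F}$ and place each one in some $\mathcal{F}_i$, but your multiplicity bound does not control the quantity the algorithm actually needs. When a ball $L$ arrives, the obstruction to placing it is that each family $\mathcal{F}_i$ may already contain a ball $L'_i$ with $\frac{1}{v}L\cap\frac{1}{v}L'_i\neq\emptyset$; to guarantee an available slot you must bound the number of such \emph{blockers}, one per family. These blockers lie in \emph{different} families and have no reason to be mutually $(1/v)$-disjoint, so condition~(iii) of your packing estimate is not satisfied and the bound does not apply. Concretely, take $E=[0,1]$ and $r_{(x)}\equiv 1$: any two balls $B(x,1)$, $B(y,1)$ have $\frac{1}{v}B(x,1)\cap\frac{1}{v}B(y,1)\neq\emptyset$, so after $Q_{d,v}$ assignments every subsequent ball is blocked in every family and the procedure stalls. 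More structurally, $\mathcal{F}$ may be uncountable while any $(1/v)$-disjoint family contains only finitely many balls of each dyadic scale, so finitely many such families cannot absorb all of $\mathcal{F}$.

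What is missing is a \emph{selection} step before the colouring, extracting from $\mathcal{F}$ a countable subfamily that still covers $E$ and whose balls already enjoy a global separation property. This is exactly what the paper does: it first applies the classical Besicovitch lemma (the case $v=1$) to obtain $Q_{d,1}$ countable families of pairwise \emph{disjoint} balls covering $E$; then, within each such family, the disjointness gives precisely the hypothesis ($vB\cap vB'=\emptyset$) needed for the packing lemma, and a greedy colouring through the dyadic scales (essentially your argument) splits it into $\gamma_{d,v}+1$ subfamilies with pairwise disjoint $(1/v)$-dilations. Your colouring step is correct in spirit, but it only works on an input that has already been pruned to carry this separation; applied directly to the raw family $\mathcal{F}$ it fails.
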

 
The case $v=1$ corresponds to  the standard Besicovich's covering lemma (see \cite{Ma}, Chapter 2, pp. 28-34 for instance).

A first step toward   Proposition \ref{besimodi} is the  next lemma, that allows to split  a given family of "weakly" overlapping balls into a finite number of families of disjoint balls. 
\begin{lemme} 
\label{fami}
Let $0<v<1$ and $\mathcal{B} =(B_n)_{n\in\mathbb{N}}$   be a countable family of balls such that $\lim_{n\to +\infty} |B_n|=0$, and  for every  $n\neq n' \in \N$,   $vB_n \cap vB_n' =\emptyset$.

   There exists $\gamma_{d,v}+1$ ($\gamma_{d,v}$ being the constant appearing in Lemma \ref{dimconst} below) sub-families of $\mathcal{B}$, $(\mathcal{F}_i)_{1\leq i\leq \gamma_{d,v}+1}$, such that: 
\begin{itemize}
\item 
$\mathcal{B}=\bigcup_{1\leq i\leq \gamma_{d,v}+1}\mathcal{F}_i$,
\item
$\forall  \, 1\leq i\leq \gamma_{d,v}+1$, $\forall L\cap L' \in\mathcal{F}_i$, one has $L \cap L'=\emptyset.$
\end{itemize} 
\end{lemme}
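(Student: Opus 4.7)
The approach is a greedy coloring of the intersection graph on $\mathcal{B}$, processing balls in order of decreasing radius and invoking Lemma~\ref{dimconst} as the sole geometric input.

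\sk

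First, since $\vert B_n\vert\to 0$, only finitely many $B_n$ have diameter above any fixed $\delta>0$, so there exists a bijection $\sigma\colon\N\to\N$ with $r_{\sigma(1)}\ge r_{\sigma(2)}\ge \cdots$. I then construct a coloring $c\colon\N\to\{1,\dots,\gamma_{d,v}+1\}$ inductively: having colored $B_{\sigma(1)},\dots,B_{\sigma(n-1)}$, I look at the ``conflict set'' $C_n = \{\, m<n \,:\, B_{\sigma(m)}\cap B_{\sigma(n)}\ne\emptyset \,\}$ and assign to $B_{\sigma(n)}$ the smallest index in $\{1,\dots,\gamma_{d,v}+1\}$ not already used by any ball of $C_n$. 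Setting $\mathcal{F}_i=\{B_{\sigma(n)}:c(n)=i\}$ then yields a partition of $\mathcal{B}$ whose members are, by construction, pairwise disjoint inside each class.

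\sk

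The only thing that needs checking is that a free color is always available, i.e.\ that $|C_n|\le\gamma_{d,v}$. Every ball of $C_n$ has radius at least $r_{\sigma(n)}$ (by the ordering), meets $B_{\sigma(n)}$, and has its $v$-contraction disjoint both from $vB_{\sigma(n)}$ and from the $v$-contractions of the other members of $C_n$ (this is the standing hypothesis on $\mathcal{B}$). This is precisely the configuration in which Lemma~\ref{dimconst} supplies a uniform packing bound $|C_n|\le\gamma_{d,v}$. The main obstacle lies in this packing estimate rather than in the coloring step: a naive volumetric count stratified by the dyadic scales of the ratios $r_{\sigma(m)}/r_{\sigma(n)}$ produces a bound depending on $\log(r_{\sigma(1)}/r_{\sigma(n)})$ and is therefore not uniform, so Lemma~\ref{dimconst} must replace it by an angular-separation statement on $S^{d-1}$ for the directions pointing from $x_{\sigma(n)}$ to the other centers $x_{\sigma(m)}$. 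Once that statement is granted, the coloring argument above is purely formal and produces the required decomposition of $\mathcal{B}$ into $\gamma_{d,v}+1$ sub-families of pairwise disjoint balls.
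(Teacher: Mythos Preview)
Your proof is correct and follows essentially the same greedy-coloring strategy as the paper, with Lemma~\ref{dimconst} bounding the number of previously-placed balls that any new ball can meet. The only difference is organizational: the paper groups balls into dyadic scales $\mathcal{G}^{(k)}=\{L:2^{-k-1}<|L|\le 2^{-k}\}$ and processes these groups in order (hence the factor $\tfrac12$ in the radius hypothesis of Lemma~\ref{dimconst}), whereas your direct non-increasing enumeration by radius is slightly cleaner and yields $|B_{\sigma(m)}|\ge|B_{\sigma(n)}|$ outright.
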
 

\begin{proof} 
The proof is based on the following lemma,   whose proof can be found in \cite{Ma}, Lemma 2.7, pp.30 - there,   the result is obtained for   $v= {1}/{2}$ but the proof remains valid for any $v<1$.
\begin{lemme} \label{dimconst}
For any $0<v\leq 1$ there exists a constant $\gamma_{v,d} >0 $ depending only on $v$ and the dimension $d$ only, satisfying the following: if  a  family of balls $\mathcal{B} =(B_n )_{n\in\mathbb{N}}$  and a ball $B$  are such that 

\begin{itemize}
\item
$\forall \ n\geq 1$, $\vert B_n\vert \geq \frac{1}{2}\vert B\vert,$

\item
 $\forall \ n_1 \neq n_2 \geq 1 $, $vB_{n_1}\cap vB_{n_2}=\emptyset,$ 
\end{itemize}
then $B$  intersects  at most $\gamma_{v,d}$ balls of $\mathcal{B}$.  
\end{lemme}

The families $\mathcal{F}_1 ,..., \mathcal{F}_{\gamma_{d,v}+1}$ are built recursively.

For $k\in\mathbb{N}$, call  $\mathcal{G}^{(k)}=\left\{L\in\mathcal{F}: 2^{-k-1} <  \vert L\vert \leq 2^{-k}\right\}$. Notice  that, because $\lim_{n\to +\infty} |B_n|=0$, each $\mathcal{G}^{(k)}$ is empty or finite.

Observe  first that for every $k\in\mathbb{N}$ and every ball $B\in\mathcal{G}^{(k)}$,  and every pair of balls $B_1 \neq B_2\in\bigcup_{k'\leq k}\mathcal{G}^{(k')}\setminus\left\{B\right\}$, one has $vB_1 \cap v B_2=\emptyset$ and for $i=1,2$, $\vert B_i \vert \geq \frac{\vert B \vert }{2}$. By Lemma \ref{dimconst}, this implies  that $B$ intersects at most $\gamma_{d,v}$ balls of $\bigcup_{k'\leq k}\mathcal{G}^{(k')}\setminus\left\{B\right\}$. 

To get Lemma \ref{fami}, we are going to sort the balls of $\bigcup_{k '\leq k}\mathcal{G}^{(k')}$ recursively on $k$ into  families $\mathcal{F}_1 ,..., \mathcal{F}_{\gamma_{d,v}+1}$ of   pairwise disjoint  balls. At each step, a new ball $B$ will be added to one of those families of balls $\mathcal{F}_{i}$ and the resulting family, $\mathcal{F}_i \bigcup \left\{B\right\}$ will be denoted again by $\mathcal{F}_i$.

\mk

Let $k_0$ be the smallest integer such that $\mathcal{G}^{(k_0)}$ is non-empty. Consider an arbitrary $L_0  \in\mathcal{G}^{(k_0)}$. By Lemma \ref{dimconst}, $L_0$ intersects  $n_{0}\leq\gamma_{d,v}$ other balls of $\mathcal{G}^{(k_0)}$, that are denoted by  $L_1 ,...,L_{n_0}$.  The sets  $\mathcal{F}_i$ are then set as follows:  
\begin{itemize}
\item
$\forall \, 1\leq i\leq n_0$, $\mathcal{F}_i =\left\{L_i \right\}$,
\item
$\forall \  n_0 +1\leq i\leq \gamma_{d,v}$, $\mathcal{F}_i=\emptyset$,
\item
 $\mathcal{F}_{\gamma_{d,v}+1}=\left\{L_0\right\}.$ 
\end{itemize}

Further, consider $\widetilde L \notin   \bigcup_{0\leq i\leq n_0}\left\{L_i\right\}$ (whenever such an $\widetilde L$ exists). The same argument (Lemma \ref{dimconst}) ensures that   $\widetilde L $ intersects at most $\gamma_{d,v}$ balls of $\mathcal{G}^{(k_0)}$.

In particular there must exists $1\leq i\leq \gamma_{d,v}+1$ such that for every $L\in\mathcal{F}_i$, $\widetilde L \cap L=\emptyset$. Choosing arbitrarily one of those indices $i$, one adds $\widetilde L$ to   $\mathcal{F}_i :=\widetilde L\bigcup \mathcal{F}_i $ (we keep the same name for this new family).

\mk

The same argument remains valid for any other ball $L^{\prime \prime} \notin \bigcup_{1\leq j\leq \gamma_{d,v}+1}\bigcup_{L\in\mathcal{F}_j}\left\{L\right\}$. Hence, proceeding recursively on all  balls of $\mathcal{G}^{(k_0)}$ allows to sort the balls of    $\mathcal{G}^{(k_0)}$ into $ \gamma_{d,v}+1$ families $(\mathcal{F}_i)_{1 \leq i \leq \gamma_{d,v}+1}$  of pairwise disjoint balls.

\mk

Next, let $k_1$ be the smallest integer such that $k_1 >k_0$ and $\mathcal{G}^{(k_1 )}$ is non empty, take an arbitrary  $L_{0}^{(1)}\in\mathcal{G}^{(k_1)}$. It is trivial to check that the family $\mathcal{G}^{(k_0)}\cup \mathcal{G}^{(k_1)}$ and the ball $L_{0}^{(1)}$ satisfy the conditions of Lemma   \ref{dimconst}. Subsequently,  $L_{0}^{(1)}$   intersects at most $\gamma_{d,v}$ balls of $\mathcal{G}^{(k_0)}\bigcup \mathcal{G}^{(k_1)}$, and there must exist  an integer  $1\leq  i _0 \leq \gamma_{d,v}+1$ such that $L_{0}^{(1)}\cap \bigcup_{L\in\mathcal{F}_{i_0}}L=\emptyset$. As before, we add this ball $L_{0}^{(1)} $ to the family $\mathcal{F}_{i_0} $.

Consider $\widetilde {L } \in \mathcal{F}^{(k_1)}$ such that $\widetilde {L }  \notin \bigcup_{1\leq i\leq \gamma_{d,v}+1}\mathcal{F}_i$ (whenever such a ball exists). The exact same argument shows the existence of an integer $1\leq \widetilde i \leq \gamma_{d,v}+1$ such that  $\widetilde {L } $ intersects at most $ \gamma_{d,v}$ balls of $\mathcal{G}^{(k_0)}\bigcup \mathcal{G}^{(k_1)}$. One adds $\widetilde {L } $ to the family $\mathcal{F}_{\, \widetilde i}$, which remains composed only of pairwise disjoint balls. 

One applies this argument to every ball of $\mathcal{F}^{(k_1)}$, hence finally sorting the balls of $\mathcal{F}^{(k_0)}\cup \mathcal{F}^{(k_1)}$ into $\gamma_{d,v}+1$ families of pairwise disjoint balls, as requested.
\mk

It is now easily seen that one can proceed recursively on $k\geq k_0$,  ending up with  the families $\mathcal{F}_1,...,\mathcal{F}_{\gamma_{d,v}+1}$   fulfilling  the desired properties.
\end{proof} 

We are  now ready to prove Proposition \ref{besimodi}.
\begin{proof} Fix $E\subset [0,1]^{d}$ and  $\mathcal{F}=\left\{B(x, r_{(x)} ): x\in E, r_{(x)} >0 \right\}$.

One applies Besicovich's theorem (i.e. Proposition \ref{besimodi} with $v=1$)  to $\mathcal{F}=\left\{B(x,  r_{(x)} ):x\in E \ r_{(x)} >0 \right\}$. This provides us with a finite set of  families of balls $\mathcal{G}_1 ,...,\mathcal{G}_{\gamma_{d,1}+1}$ composed of pairwise disjoint balls satisfying \eqref{besi}, i.e.  $ 
 E\subset  \bigcup_{1\leq i\leq Q_{\gamma_{d,1}+1}} \bigcup_{L\in \mathcal{G}_i}L.$

For every $1\leq i\leq Q_{\gamma_{d,1}+1}$, one sets $\mathcal{G}^{(v)}_i=\left\{\frac{1}{v}L: L\in\mathcal{G}_i\right\}$, i.e. the sets of balls with same centers as $\mathcal{G}_i$ but with radii multiplied by $v^{-1}>1$. Notice that by construction, $\forall  \, 1\leq i\leq Q_{\gamma_{d,1}+1}$, $\forall \, L \neq L' \in\mathcal{G}_i ^{(v)}$, one has $vL  \cap vL' =\emptyset$.  Hence,   Lemma \ref{fami} yields  $\gamma_{d,v}+1$ sub-families $(\mathcal{G}^{(v)}_{i,j})_{1\leq j \leq \gamma_{d,v}+1}$ of $\mathcal{G}_{i}^{(v)}$ such that: \medskip
\begin{itemize}
\item
$\forall \ 1\leq j\leq \gamma_{d,v}+1$, $\forall \ L  \neq L' \in \mathcal{G}^{(v)}_{i,j}$, one has $L  \cap L' =\emptyset,$\medskip
\item
$ \mathcal{G}_{i}^{(v)}=\bigcup_{1\leq j\leq \gamma_{d,v}+1}\mathcal{G}_{i,j}^{(v)}.$\medskip
\end{itemize}
Finally, we set   for every $  1\leq i\leq Q_{d,1} $ and $1\leq j\leq \gamma_{d,v}+1$ 
$$\mathcal{F}_{i,j}=\left\{vL: L\in\mathcal{G}_{i,j}^{(v)}\right\}  \ \ \mbox { and } \ \ \  \mathcal{F}_{i} = \bigcup_{1\leq j\leq \gamma_{d,v}+1}\mathcal{F}_{i,j}  .$$
These sets verify that:
\begin{itemize}
\item 
$\forall \ 1\leq i\leq Q_{d,1}$, $\forall \ 1\leq j\leq \gamma_{d,v}+1$, $\forall L \neq L' \in\mathcal{F}_{i,j}$, $\frac{1}{v}L \cap \frac{1}{v} L' = \emptyset$ (because the balls of $\mathcal{G}_{i,j}$ are pairwise disjoint),\medskip
\item 
$E\subset \bigcup_{1\leq i\leq Q_{d,1}}\mathcal{G}_i =\bigcup_{1\leq i\leq Q_{d,1}} \ \bigcup_{1\leq j\leq \gamma_{d,v}+1}\mathcal{F}_{i,j}.$\medskip
\end{itemize}
This proves the statement and the fact that $Q_{d,v}=Q_{d,1}.(\gamma_{d,v}+1)$.
\end{proof} 
\subsection{Consequences of the $\mu$-asymptotic covering property} 
\label{seccons}

One  first  shows  that the constant   $C$ in Definition \ref{ac}  can  be  replaced by 1 if infinite subsequences of balls are authorized. In fact Definition \ref{ac} ensures that  any open set can be covered (with respect to  the $\mu$-measure) by disjoint balls $B_n$ of arbitrary large indices.

\begin{lemme} 
\label{covO}
Let   $\mu\in\mathcal{M}(\mathbb{R}^d)$  and $\mathcal{B} =(B_n :=B(x_n ,r_n))_{n\in\mathbb{N}}$ be a $\mu$-a.c sequence of balls of $ \mathbb{R}^d$ with $\lim_{n\to +\infty} r_{n}= 0$.

Then  for every open set $\Omega$ and every integer $g\in\mathbb{N}$, there exists  a subsequence  $(B_{(n)}^{(\Omega)})\subset \left\{B_n \right\}_{n\geq g} $ such that:
\begin{enumerate}
\item 
$\forall \,  n\in\mathbb{N}$, $B_{(n)}^{(\Omega)}\subset \Omega,$
\sk
\item
 $\forall \, 1\leq n_1\neq n_2$, $B_{(n_1)}^{(\Omega)}\cap B_{(n_2)}^{(\Omega)}=\emptyset$,
 \sk
\item
 $\mu\left (\bigcup_{n\geq 1}B_{(n)}^{(\Omega)}\right)= \mu(\Omega).$
\end{enumerate}
In addition, there exists an integer $N_{\Omega}$ such that  for the balls $ (B_{(n)}^{(\Omega)})_{n=1,...,N_\Omega}$, the conditions (1) and (2) are realized, and (3) is replaced by  $\mu\left (\bigcup_{n=1}^{N_\Omega} B_{(n)}^{(\Omega)}\right)\geq \frac{3}{4} \mu(\Omega).$  
\end{lemme}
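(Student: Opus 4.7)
The plan is to iterate Definition \ref{ac} geometrically, peeling off at each stage a finite disjoint collection inside whatever remains of $\Omega$. The fact that balls are closed in the paper's convention ensures that the leftover set stays open, and the freedom to choose the index threshold $g$ arbitrarily large in Definition \ref{ac} ensures that successive selections avoid reusing indices.

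\textbf{Step 1 (peeling iteration).} Set $\Omega_0 = \Omega$ and $g_0 = g$. Applying Definition \ref{ac} to $\Omega_0$ with threshold $g_0$ produces a finite collection $\mathcal{B}_1 = \{B_{n_1^{(1)}}, \dots, B_{n_{N_1}^{(1)}}\}$ of pairwise disjoint balls, all contained in $\Omega_0$, all with indices $\geq g_0$, and satisfying
\begin{equation*}
\mu\Big(\bigcup_{B \in \mathcal{B}_1} B\Big) \geq C \mu(\Omega_0).
\end{equation*}
Inductively, once $\mathcal{B}_1, \dots, \mathcal{B}_k$ are constructed, define
\begin{equation*}
\Omega_k = \Omega \setminus \bigcup_{j=1}^{k} \bigcup_{B \in \mathcal{B}_j} B,
\qquad
g_{k+1} = 1 + \max\{n_i^{(j)} : 1 \leq j \leq k,\ 1 \leq i \leq N_j\}.
\end{equation*}
Since each selected ball is closed, $\Omega_k$ is open, so Definition \ref{ac} applies and yields $\mathcal{B}_{k+1}$, a finite family of pairwise disjoint balls contained in $\Omega_k$ (hence automatically disjoint from every ball in $\mathcal{B}_1, \dots, \mathcal{B}_k$), with indices $\geq g_{k+1}$ (hence genuinely distinct from those previously used), and with
\begin{equation*}
\mu\Big(\bigcup_{B \in \mathcal{B}_{k+1}} B\Big) \geq C\mu(\Omega_k).
\end{equation*}

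\textbf{Step 2 (geometric decay).} From the construction,
\begin{equation*}
\mu(\Omega_{k+1}) = \mu(\Omega_k) - \mu\Big(\bigcup_{B \in \mathcal{B}_{k+1}} B\Big) \leq (1-C)\mu(\Omega_k),
\end{equation*}
so by induction $\mu(\Omega_k) \leq (1-C)^k \mu(\Omega)$.

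\textbf{Step 3 (conclusion).} Enumerate the countable disjoint union $\bigcup_{k \geq 1} \mathcal{B}_k$ as $(B_{(n)}^{(\Omega)})_{n \geq 1}$. By construction, conditions (1) and (2) hold, and
\begin{equation*}
\mu\Big(\Omega \setminus \bigcup_{n \geq 1} B_{(n)}^{(\Omega)}\Big) = \mu\Big(\bigcap_{k \geq 0} \Omega_k\Big) \leq \lim_{k \to \infty}(1-C)^k \mu(\Omega) = 0,
\end{equation*}
which yields (3). For the finite version, pick $k_0$ with $(1-C)^{k_0} \leq 1/4$ and let $N_\Omega = N_1 + \dots + N_{k_0}$; then $\bigcup_{n=1}^{N_\Omega} B_{(n)}^{(\Omega)} = \Omega \setminus \Omega_{k_0}$ has $\mu$-measure at least $(1 - (1-C)^{k_0})\mu(\Omega) \geq \tfrac{3}{4}\mu(\Omega)$.

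The proof is essentially bookkeeping around the $\mu$-a.c property; the only point requiring attention is that the residual sets $\Omega_k$ remain open after removing closed balls, so that Definition \ref{ac} can legitimately be reapplied, and that the threshold $g_{k+1}$ can be pushed arbitrarily high to keep the global list of chosen indices injective. Both points are built into Definition \ref{ac} as stated.
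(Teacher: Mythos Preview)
Your proof is correct and follows essentially the same iterative peeling argument as the paper: repeatedly apply the $\mu$-a.c.\ definition to the open residual $\Omega_k$, observe the geometric decay $\mu(\Omega_k)\le(1-C)^k\mu(\Omega)$, and collect the resulting disjoint families. Your write-up is in fact slightly more careful than the paper's, since you explicitly justify that $\Omega_k$ remains open (closed balls are removed) and push the index threshold $g_{k+1}$ forward to guarantee injectivity of the chosen indices, points the paper leaves implicit.
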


The last part of Lemma \ref{covO} simply follows from item (3) and the $\sigma$-additivity of $\mu$.

\begin{proof} 
 The idea consists in covering $\Omega$ by pairwise disjoint balls amongst those balls of  $\mathcal{B} $,  such that their union has measure at least $C\mu(\Omega)$, then in covering   the complementary of the union of those balls in $\Omega$ (that is still open) with at least a proportion $C$ of its measure, and so on.

More precisely, this is achieved as follows:

\mk$\bullet$
 {\bf Step 1:} By application of Definition \ref{ac} to $\Omega_0 :=\Omega$ and $g\in \N$, there exists $C>0$ and some integers $g\leq n_1 \leq ...\leq n_{N_{0}}$  so that the family of balls $\mathcal{F}_{0}:=\left\{B_{n_i}:=B^{(0)}_i\right\}_{1\leq i\leq N_{0}}$ is pairwise disjoint and $\mu(\bigcup_{1\leq i\leq N_{0}}B_{n_i})\geq C\mu(\Omega).$

\mk$\bullet$ {\bf Step 2:}  Setting $\Omega_1 =\Omega\setminus \bigcup_{ L\in\mathcal{F}_0}L$, applying    Definition \ref{ac} to $\Omega_1$  with the integer $g$ provides us with  a family $\mathcal{F}_1$ of pairwise disjoint balls $B_{1}^{(1)},...,B_{N_1}^{(1)}\in\left\{B_n \right\}_{n\geq g}$ such that $\forall \ 1\leq i\leq N_1$ $B^{(1)}_i \subset \Omega_1$ and $$ \mu(\bigcup_{1\leq i\leq N_2}B_{i}^{(1)})\geq C\mu(\Omega_1).$$ 

One sets $\mathcal{F}_1 =\mathcal{F}_0 \bigcup \mathcal{F}_1 $. One sees that  
\begin{align*}
\mu \left(\bigcup_{L\in\mathcal{F}_1}L\right )&= \mu\left(\bigcup_{L\in\mathcal{F}_0}L\right)+\mu\left(\bigcup_{L\in\mathcal{G}_1}L\right)\\
& \geq \mu\left(\bigcup_{L\in\mathcal{F}_0}L\right)+C\left(\mu(\Omega)-\mu\left( \bigcup_{L\in\mathcal{F}_0}L\right)\right)\\
&\geq (1-C)\mu\left(\bigcup_{L\in\mathcal{F}_0}L\right)+C\mu\left(\Omega\right) \\
&\geq (C+C(1-C))\mu(\Omega) .
\end{align*}
Observe that the balls of $\mathcal{F}_0$ and $\mathcal{F}_1$ are disjoint by construction.

\mk$\bullet$ {\bf Following steps :} Proceeding recursively, and applying the exact same argument as above, one constructs an  increasing sequence of families $(\mathcal{F}_{i})_{i\in\mathbb{N}}$ and a  decreasing sequence of open sets $\Omega_i$ such that:
\begin{itemize} 
\sk
\item 
$\forall \, i\in\mathbb{N}$,  $L\in \{B_n \}_{n\geq g} $ and $\forall L\in\mathcal{F}_i $, $L\subset \Omega_i\subset \Omega,$

\sk
\item  
$\forall  \,i\in\mathbb{N}$, $\forall L \neq L'\in\mathcal{F}_i $, $L  \cap L' =\emptyset$, 

\sk
\item  
$\forall  \, i\neq j\in\mathbb{N}$, $\forall L \in\mathcal{F}_i $ and $\forall L \in\mathcal{F}_j $, $L \cap L' =\emptyset$, 

\sk
\item $\forall  \, i\in\mathbb{N}$,  
$\mu\left(\bigcup_{L\in\mathcal{F}_i}L\right)\geq \mu(\Omega) \sum_{1\leq k\leq i}C(1-C)^{k-1}.$ 
\end{itemize}

Finally, setting $\mathcal{F}=\bigcup_{i\in\mathbb{N}}\mathcal{F}_i$, one sees that  $\mathcal{F}$ is constituted by pairwise disjoint balls chosen amongst $\left\{B_n\right\}_{n\geq g}$   satisfying 

\begin{equation}
\label{usefulmath}
\mu(\Omega)\geq \mu\left(\bigcup_{L\in\mathcal{F}}L\right )\geq \mu(\Omega)\sum_{k\geq 1}C(1-C)^{k-1}=\mu(\Omega),
\end{equation}
so that $\mathcal{F}$ fulfills the conditions of Lemma \ref{covO}.    
\end{proof}

An easy consequence is the following.
\begin{corollary}
\label{covBor}
Let $\mu \in\mathcal{M}(\mathbb{R}^d)$ and $(B_n )_{n\in\mathbb{N}}$ be a $\mu$-a.c sequence of balls. Then for any Borel set $E$, for any $g\in\mathbb{N}$, there exists a sub-sequence of balls $(B_{(n)}^{(E)})\subset \left\{B_n\right\}_{n\geq g}$ such that:
\begin{enumerate}
\item $\forall 1\leq n_1 \neq n_2$, $B_{(n_1)}^{(E)}\cap B_{(n_2)}^{(E)}=\emptyset,$\mk
\item $\mu\Big(\bigcup_{n\in\mathbb{N}}B_{(n)}^{(E)}\cap E\Big)=\mu(E),$\mk
\item $\mu\Big(\bigcup_{n\in\mathbb{N}}B_{(n)}^{(E)}\Big)\leq \mu(E)+\varepsilon,$
\end{enumerate} 
\end{corollary}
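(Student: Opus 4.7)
The plan is to reduce the Borel case to the open case already handled by Lemma \ref{covO}. The statement (3) implicitly introduces an $\varepsilon>0$ (not quantified in the display), and this is the key parameter I would fix at the start: given a Borel set $E$, an integer $g\in\mathbb{N}$, and $\varepsilon>0$, the goal is to produce the promised disjoint subfamily.

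The first step is to pass from $E$ to an open neighborhood. Since $\mu$ is a Borel probability measure on $\mathbb{R}^d$, it is automatically Radon, hence outer regular: there exists an open set $\Omega\supset E$ with $\mu(\Omega) \leq \mu(E)+\varepsilon$. Now I would apply Lemma \ref{covO} to this $\Omega$ and the integer $g$. This produces a pairwise disjoint subfamily $(B_{(n)}^{(\Omega)})_{n\in\mathbb{N}}\subset \{B_n\}_{n\geq g}$ with each $B_{(n)}^{(\Omega)}\subset \Omega$ and
\[
\mu\Bigl(\bigcup_{n\in\mathbb{N}} B_{(n)}^{(\Omega)}\Bigr) = \mu(\Omega).
\]
Set $B_{(n)}^{(E)} := B_{(n)}^{(\Omega)}$.

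It then suffices to check the three requirements. Property (1) is immediate from Lemma \ref{covO}. Property (3) follows since $\bigcup_n B_{(n)}^{(E)}\subset \Omega$, so $\mu(\bigcup_n B_{(n)}^{(E)})\leq \mu(\Omega)\leq \mu(E)+\varepsilon$. For property (2), observe that
\[
E\setminus \bigcup_{n} B_{(n)}^{(E)} \ \subset\ \Omega\setminus \bigcup_{n} B_{(n)}^{(E)},
\]
and the right-hand side has $\mu$-measure $\mu(\Omega) - \mu(\bigcup_n B_{(n)}^{(E)}) = 0$ by construction; hence $\mu(E\cap \bigcup_n B_{(n)}^{(E)}) = \mu(E)$.

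There is essentially no obstacle: the entire non-trivial content is packaged inside Lemma \ref{covO}, and the additional ingredient (outer regularity of Borel probability measures on $\mathbb{R}^d$) is classical. The only mild subtlety is recognizing that a slight loss $\varepsilon$ in (3) is unavoidable in general, because Lemma \ref{covO} delivers balls living inside the open set used as input, not inside $E$ itself.
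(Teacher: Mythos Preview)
Your proof is correct and follows essentially the same approach as the paper: use outer regularity to find an open set $\Omega\supset E$ with $\mu(\Omega)\le \mu(E)+\varepsilon$, then apply Lemma~\ref{covO} to $\Omega$. The paper's version is more terse and does not spell out the verification of (2), but the argument is identical.
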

\begin{proof}
By outer regularity, there exists an open set $\Omega$ such that $E\subset \Omega$ and $m(\Omega)\leq \mu(E)+\varepsilon.$ Applying Lemma \ref{covO} to $\Omega$, the sequence $(B_n)_{n\in\mathbb{N}}$ fulfills the condition of Corollary \ref{covBor}.
\end{proof}



\subsection{Proof of Theorem \ref{equiac}} 

\label{secequiac}

{\bf (1)} Assume first that    $\mathcal{B} = (B_n)_{n\in\mathbb{N}}$ is $\mu$-a.c, and let us prove that $\mu(\limsup_{n\rightarrow+\infty}B_n)=1$.

For every $g\in\mathbb{N}$, applying Lemma \ref{covO}, there exists a  sub-family of balls, $\mathcal{F}_g \subset \left\{B_{n}\right\}_{n\geq g}$ such that $\mu(\bigcup_{L\in\mathcal{F}_g}L)=\mu(\mathbb{R}^d)=1.$  In particular, $\mu(\bigcup_{n\geq g}B_n)=1$ for every $g\geq 1$, and   $\mu(\limsup_{n\rightarrow+\infty}B_n)=\mu(\bigcap_{g\geq 1}\bigcup_{n\geq g}B_n)=1$.   
\medskip

{\bf (2)} Suppose next  that there exists $v<1$ such that $\mu(\limsup_{n\rightarrow+\infty}v B_n)=1$, and let us show that  $\mathcal{B} $ is $\mu$-a.c.

Let $\Omega$ be an open set in $\mathbb{R}^d$. Our goal is to find a constant $C$ such that the conditions of Definition \ref{ac} are realized.

%
%
%

\mk

Let  $E= \Omega \cap  \limsup_{n\rightarrow+\infty}vB_n$. 
For every   $y\in E$, consider an integer $n_y\geq g$ large enough so that $y\in vB_{n_y}$ and $B(y,2r_{n_y} )\subset \Omega.$  This is possible since $\lim_{n\to +\infty} r_n=0$.

Since $y\in vB_{n_y}$, one has 
\begin{equation}
\label{eq11}
  B(y,(1-v)r_{n_y})   \subset B_{n_y} \subset B(y, (1+v) r_{n_y}) \subset B(y,2r_{n_y}),
\end{equation}
 and the family     $\mathcal{F}= \left\{  B(y, (1-v) r_{n_y}): y\in E\right\}$ covers $E $ by balls   centered on $E$.

Applying Proposition \ref{besimodi} with constant $v'= \frac{1-v}{2} <1$  allows to extract from $\mathcal{F}$  finite or countable sub-families   $\mathcal{F}_1,...,\mathcal{F}_{Q_{d,v'}}$  such that: 
\begin{itemize}

\item
$\forall 1\leq i\leq Q_{d,v'}$, $L\neq L'\in\mathcal{F}_i$, one has $ \frac{1}{v'} L \cap  \frac{1}{v'} L'=\emptyset.$\medskip

\item 
$E$ is covered by the families $\mathcal{F}_i$, i.e.  \eqref{besi} holds true.
\end{itemize}

Now, $\mu(\Omega) =\mu(E) \leq  \mu \left ( \bigcup_{i=1}^{ Q_{d,v'}} \bigcup_{L\in\mathcal{F}_{i}} L \right ) $.
 There must exist  $1\leq i_0 \leq Q_{d, v'}$ such  that 
$$
\mu \left (\bigcup_{L\in\mathcal{F}_{i_0}}L \right)\geq \frac{1}{Q_{d,v'}}\mu(E )= \frac{1}{Q_{d, v'}}\mu(\Omega).
$$
 
 There exist   $L_1$, $L_2$, ... $L_N$   balls of  $\mathcal{F}_{i_0}$ such that
$$\mu\left (\bigcup_{1\leq k\leq N}L_{k}  \right )\geq\frac{1}{2 Q_{d,v'}}\mu(\Omega),
$$
Notice the following facts:
\begin{itemize}
\item
$\forall 1\leq i\leq Q_{d,v'}$,  every $L \in\mathcal{F}_i$ is naturally associated with some $y \in E$ and some ball $B_{n_y}$,  with $L\subset B_{n_y} \subset \Omega$,
\item 
$\forall 1\leq i\leq Q_{d,v'}$,  if $L \in\mathcal{F}_i$ is   associated with   $y \in E$ and     $B_{n_y}$  and $L' \in\mathcal{F}_i$ is   associated with   $y' \in E$ and   $B_{n_{y'}}$, then    $ \frac{1}{v'} L \cap  \frac{1}{v'} L'=\emptyset$ implies by \eqref{eq11} that  $B_{n_{y}} \cap B_{n_{y'}}=\emptyset$.

\end{itemize}
The first fact implies that   there exist $N$ integers $n_1$, ..., $n_N$ such that  $B_{n_k}\subset \Omega$ and 
$$
\mu\left (\bigcup_{1\leq k\leq N} B_{n_k}   \right )\geq\frac{1}{2 Q_{d,v'}}\mu(\Omega),
$$
The second fact implies that these balls $B_{n_k} $, $k=1,..., N$ are pairwise disjoint.

This exactly proves that $\mathcal{B}$ is $\mu$-a.c.

\subsection{A version of Borel-Cantelli Lemma}
\label{secBC}
In this manuscript, one mainly focuses on establishing Hausdorff dimension of limsup sets knowing that a certain limsup set of balls has full measure. In many situation, proving that those limsup sets have full measure is straightforward. When it is not, it is convenient to have a tool at our disposal to be able to determine whether or not it is the case. In the case where the measure involved is doubling is treated by Beresnevich-Velani.
\begin{theoreme}[\cite{BVBC}]
Let $\mu\in\mathcal{M}(\mathbb{R}^d)$ be a doubling measure and $(B_n)_{n\in\mathbb{N}}$ a sequence of balls centered in $\supp(\mu)$ such that $\vert B_n \vert\to 0.$ Then $\mu(\limsup_{n\in\mathbb{N}}B_n)=1\Leftrightarrow \exists C>1$ such that for any open ball $B$ centered on $\supp(\mu),$ there exists a sub-sequence $(L_{B,n})_{n\in\mathbb{N}}$ of $(B_n)_{n\in\mathbb{N}}$ satisfying:
\begin{itemize}
\item[•] $L_{B,n}\subset B,$\sk
\item[•] $\sum_{n=0}^{+\infty}\mu(L_{B,n})=+\infty,$\sk
\item[•] for infinitely many $Q  \in\mathbb{N},$ 
\begin{equation}
\sum_{s,t=1}^{Q}\mu(L_{B,s}\cap L_{B,t})\leq \frac{C}{\mu(B)}\left(\sum_{n=1}^{Q}\mu(L_{B,n})\right)^2.
\end{equation}
\end{itemize}
\end{theoreme}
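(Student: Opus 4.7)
The plan is to prove the two implications separately: for $(\Leftarrow)$ a second-moment (Paley--Zygmund) argument combined with a Lebesgue density reduction that is available precisely because $\mu$ is doubling; for $(\Rightarrow)$ an iterative application of Lemma \ref{covO} that builds $(L_{B,n})$ in \emph{batches} of pairwise disjoint balls with increasing index ranges. For doubling $\mu$, the remark following Theorem \ref{equiac} gives $\mu(\limsup_n B_n)=1 \iff (B_n)$ is $\mu$-a.c., so Lemma \ref{covO} is available throughout the forward direction.

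For $(\Leftarrow)$, fix an open ball $B$ centered on $\supp(\mu)$ and let $(L_{B,n})$ be the subsequence provided by the hypothesis. The Cauchy--Schwarz inequality applied to $\mathbf{1}_{\bigcup_{n=1}^Q L_{B,n}}$ against $\sum_{n=1}^Q \mathbf{1}_{L_{B,n}}$ yields
\[
\mu\!\left(\bigcup_{n=1}^{Q} L_{B,n}\right) \;\geq\; \frac{\bigl(\sum_{n=1}^{Q}\mu(L_{B,n})\bigr)^{2}}{\sum_{s,t=1}^{Q}\mu(L_{B,s}\cap L_{B,t})} \;\geq\; \frac{\mu(B)}{C}
\]
for the infinitely many $Q$'s furnished by the hypothesis. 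Letting $Q\to\infty$ along this subsequence and using $\sum_n \mu(L_{B,n})=+\infty$ gives $\mu(\limsup_n L_{B,n}) \geq \mu(B)/C$, hence $\mu(\limsup_n B_n \cap B) \geq \mu(B)/C$ for every such $B$. If $\mu(\limsup_n B_n) < 1$, then $E := \supp(\mu) \setminus \limsup_n B_n$ has positive measure; the Lebesgue differentiation theorem for the doubling measure $\mu$ produces a density point $x_0 \in E$ satisfying $\mu(B(x_0,r) \cap \limsup_n B_n)/\mu(B(x_0,r)) \to 0$, contradicting the uniform lower bound above, so $\mu(\limsup_n B_n)=1$.

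For $(\Rightarrow)$, fix $B$ as above and build inductively, for $k\geq 1$, a finite family $\mathcal{F}_k$ of pairwise disjoint balls selected from $\{B_n\}_{n\geq g_k}$, all contained in $B$, with $\mu(\bigcup\mathcal{F}_k) \geq \tfrac{3}{4}\mu(B)$; this is exactly the finite form of Lemma \ref{covO} applied to $\Omega=B$ with a starting index $g_k$ chosen larger than every index used in $\mathcal{F}_1,\ldots,\mathcal{F}_{k-1}$. Let $(L_{B,n})$ enumerate $\bigsqcup_k \mathcal{F}_k$; then $\sum_n \mu(L_{B,n}) = +\infty$. Let $Q_K$ be the number of balls after $K$ batches and $S_K = \sum_{n\leq Q_K}\mu(L_{B,n}) \geq (3K/4)\mu(B)$. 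Disjointness within each batch together with the key identity
\[
\sum_{L\in \mathcal{F}_k}\sum_{L'\in\mathcal{F}_{k'}}\mu(L\cap L') = \mu\!\left(\textstyle\bigcup\mathcal{F}_k \,\cap\, \bigcup\mathcal{F}_{k'}\right) \leq \mu(B)
\]
yields $\sum_{s,t=1}^{Q_K}\mu(L_{B,s}\cap L_{B,t}) \leq S_K + K(K-1)\mu(B)$. Using $K \leq (4/3)S_K/\mu(B)$, the right-hand side is bounded by $(C/\mu(B))S_K^{2}$ for a uniform $C$ as soon as $K$ is large, supplying infinitely many $Q$'s with the required estimate.

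The main obstacle is the cross-term $\sum_{s\neq t}\mu(L_{B,s}\cap L_{B,t})$ in the forward direction: without the batch structure, intersections between balls of very different sizes could accumulate uncontrollably. The batchwise construction trades absolute smallness for a clean algebraic identity that converts within-batch pairwise disjointness into a global quadratic-in-$S_K$ bound; the fact that Lemma \ref{covO} delivers a uniform covering fraction ($3/4$) at every stage, independently of the starting index $g_k$, is precisely what makes the inductive construction go through.
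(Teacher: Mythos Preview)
The paper does not prove this theorem: it is quoted as a result of Beresnevich--Velani \cite{BVBC} and serves only as motivation for Proposition~\ref{BCmodi}. Even in the proof of Proposition~\ref{BCmodi}, the paper defers both the forward construction (item~(A)) and the Paley--Zygmund lower bound $\mu(\limsup_n B_n\cap B)\ge \mu(B)/C$ (first part of item~(B)) to \cite{BVBC}, adding only Lemma~\ref{muboule} to pass from the local lower bound to full measure. So your proposal is really being compared against the argument in \cite{BVBC} as filtered through the paper's tools.

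Your proof is correct. In the $(\Leftarrow)$ direction the Cauchy--Schwarz/Kochen--Stone step is standard; the passage from $\mu\bigl(\bigcup_{n\le Q}L_{B,n}\bigr)\ge \mu(B)/C$ to $\mu(\limsup_n L_{B,n})\ge \mu(B)/C$ needs the usual tail observation that $\bigl(\sum_{n=N}^{Q}\mu(L_{B,n})\bigr)\big/\bigl(\sum_{n=1}^{Q}\mu(L_{B,n})\bigr)\to 1$ along the good $Q$'s, which you implicitly use. Your density-point contradiction via the Lebesgue differentiation theorem for doubling measures is exactly the place where doubling enters; the paper's Lemma~\ref{muboule} achieves the same conclusion via a Besicovitch-based density lemma that does not require doubling, which is why Proposition~\ref{BCmodi} can drop that hypothesis.

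In the $(\Rightarrow)$ direction your batch construction via Lemma~\ref{covO} is clean and gives an explicit constant: with $S_K\ge \tfrac{3}{4}K\mu(B)$ one gets $S_K+K(K-1)\mu(B)\le \tfrac{28}{9}\,S_K^2/\mu(B)$ for every $K\ge 1$, so in fact all $Q=Q_K$ work, not just large ones. The key identity $\sum_{L\in\mathcal F_k}\sum_{L'\in\mathcal F_{k'}}\mu(L\cap L')=\mu\bigl(\bigcup\mathcal F_k\cap\bigcup\mathcal F_{k'}\bigr)$ is valid precisely because each batch is pairwise disjoint. This is essentially the argument of \cite{BVBC}, recast through the paper's Lemma~\ref{covO}; the use of the equivalence ``doubling $+$ full-measure $\Rightarrow$ $\mu$-a.c.'' from the remark after Theorem~\ref{equiac} is what makes Lemma~\ref{covO} available here.
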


Thanks to Theorem \ref{equiac}, one can complete this Theorem and remove the doubling assumption.

\begin{proposition}
\label{BCmodi}
Let $(B_n)_{n\in\mathbb{N}}$ be a sequence of closed balls satisfying $\vert B_n \vert\to 0$ and $ \mu\in\mathcal{M}(\mathbb{R}^d)$ be a probability measure. 
\begin{itemize}
\item[(A):] Assume that $(B_n)_{n\in\mathbb{N}}$ is $\mu$-a.c, then, there exists $C>1$ such that for any open ball $B$, there exists a sub-sequence of $(B_n)_{n\in\mathbb{N}}$,  $(L_{n,B})_{n\in\mathbb{N}}$ satisfying, for any $n\in\mathbb{N}$, $L_{B,n} \subset B$ and 
\begin{equation}
\label{eq1}
\sum_{n \geq 0}\mu(L_{B,n})=+\infty 
\end{equation}
and for infinitely many Q,
\begin{equation}
\label{eq2}
\sum_{s,t=1}^{Q}\mu(L_{B,s}\cap L_{B,t})\leq \frac{C}{\mu(B)}\left(\sum_{n=1}^{Q}\mu(L_{B,n})\right)^2.
\end{equation}
\item[(B):] Assume that there exists $C>1$ such that for any open ball $B$, there exists a sub-sequence of $(B_n)_{n\in\mathbb{N}}$ $(L_{n,B})_{n\in\mathbb{N}}$ with, for any $n\in\mathbb{N},$ $L_{n,B}\subset B$, satisfying  \eqref{eq1} and \eqref{eq2}, then $\mu(\limsup_{n\rightarrow+\infty}B_n)=1,$ so that, for any $\kappa >1,$ $(\kappa B_n)_{n\in\mathbb{N}}$ is $\mu$-a.c.
\end{itemize} 
\end{proposition}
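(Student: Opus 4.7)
My plan for part (A) is to iterate Lemma \ref{covO} inside the open ball $B$. Having constructed finite pairwise disjoint families $\mathcal{F}_1,\ldots,\mathcal{F}_{k-1}$, I would choose $g_k$ larger than every index used so far and apply the finite version of Lemma \ref{covO} with $\Omega = B$ and $g = g_k$ to extract a family $\mathcal{F}_k \subset \{B_n\}_{n\geq g_k}$ of pairwise disjoint balls contained in $B$ with $\mu(\bigcup\mathcal{F}_k)\geq \tfrac{3}{4}\mu(B)$. Concatenating the $\mathcal{F}_j$ in order yields the subsequence $(L_{B,n})_{n\in\mathbb{N}}$, and divergence of $\sum_n \mu(L_{B,n})$ is immediate. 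Writing $Q_k$ for the position where $\mathcal{F}_k$ ends and $S_{Q_k} = \sum_{n=1}^{Q_k}\mathbf{1}_{L_{B,n}}$, disjointness within each $\mathcal{F}_j$ would give $S_{Q_k}\leq k$ pointwise, so
$$\sum_{s,t=1}^{Q_k}\mu(L_{B,s}\cap L_{B,t}) \;=\; \int S_{Q_k}^2\,d\mu \;\leq\; k\sum_{n=1}^{Q_k}\mu(L_{B,n}).$$
Since $\sum_{n=1}^{Q_k}\mu(L_{B,n})\geq \tfrac{3k}{4}\mu(B)$, the factor $k$ is bounded by $\tfrac{4}{3\mu(B)}\sum_{n=1}^{Q_k}\mu(L_{B,n})$, yielding \eqref{eq2} along $Q = Q_k$ with $C = 4/3$.

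For part (B), I would combine a Cauchy-Schwarz / Paley-Zygmund argument with the Lebesgue differentiation theorem for Radon measures. Applying Cauchy-Schwarz to the truncated counting function $T_{M,Q} = \sum_{n=M}^Q \mathbf{1}_{L_{B,n}}$ gives
$$\mu\Big(\bigcup_{n=M}^Q L_{B,n}\Big) \;\geq\; \frac{\big(\sum_{n=M}^Q \mu(L_{B,n})\big)^2}{\sum_{s,t=M}^Q \mu(L_{B,s}\cap L_{B,t})} \;\geq\; \frac{\mu(B)\big(\sum_{n=M}^Q \mu(L_{B,n})\big)^2}{C\big(\sum_{n=1}^Q \mu(L_{B,n})\big)^2},$$
where the second inequality uses \eqref{eq2} and the trivial bound $\sum_{s,t=M}^Q\leq \sum_{s,t=1}^Q$. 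Because $\sum_n\mu(L_{B,n}) = +\infty$ by \eqref{eq1}, the ratio $\sum_{n=M}^Q/\sum_{n=1}^Q$ tends to $1$ along the infinite set of good indices $Q$. Passing first $Q\to\infty$ and then $M\to\infty$ would then yield
$$\mu\Big(B\cap \limsup_{n\to +\infty}B_n\Big) \;\geq\; \mu\Big(\limsup_{n\to+\infty}L_{B,n}\Big) \;\geq\; \frac{\mu(B)}{C}$$
for every open ball $B$, since each $L_{B,n}$ is contained in $B$ and is one of the $B_m$.

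The final step, and the main obstacle, is upgrading this uniform ball-by-ball lower density bound into $\mu(\limsup_n B_n) = 1$ without any doubling assumption on $\mu$. My plan is to argue by contradiction: if $E := \limsup_n B_n$ satisfied $\mu(E^c)>0$, the Lebesgue differentiation theorem for finite Radon measures on $\mathbb{R}^d$ (valid via Besicovitch's covering theorem, cf.\ Proposition \ref{besimodi}, and therefore without any doubling hypothesis) would supply a $\mu$-density point $x_0$ of $E^c$, at which $\mu(B(x_0,r)\cap E)/\mu(B(x_0,r)) \to 0$ as $r\to 0^+$. This directly contradicts the uniform inequality $\mu(B(x_0,r)\cap E) \geq \mu(B(x_0,r))/C$ established above, forcing $\mu(E) = 1$.

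Once $\mu(\limsup_n B_n)=1$ is secured, the last assertion that $(\kappa B_n)_{n\in\mathbb{N}}$ is $\mu$-a.c.\ for every $\kappa>1$ follows immediately from part (2) of Theorem \ref{equiac} applied with $v = 1/\kappa < 1$, since $\limsup_n v(\kappa B_n) = \limsup_n B_n$ has full $\mu$-measure. The step I expect to require the most care is the passage from the ball-by-ball estimate to full measure: without doubling, only the Besicovitch-based differentiation theorem makes this transition clean, and checking that it applies in the generality required (finite Borel probability measure on $\mathbb{R}^d$) is the key technical point.
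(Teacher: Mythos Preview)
Your proposal is correct and follows essentially the same route as the paper: the ball-by-ball estimate $\mu(B\cap\limsup_n B_n)\geq \mu(B)/C$ via the second-moment (Cauchy--Schwarz/Paley--Zygmund) argument is exactly what the paper imports from \cite{BVBC}, and your Besicovitch-based density-point contradiction is the same mechanism as the paper's Lemma~\ref{muboule} (which uses the density Lemma~\ref{densibesi}), just phrased through the Lebesgue differentiation theorem rather than an inclusion--exclusion at a high-density point of $E^c$. For part~(A) the paper simply cites \cite{BVBC}, whereas you supply an explicit construction by iterating Lemma~\ref{covO}; your pointwise bound $S_{Q_k}\le k$ and the resulting constant $C=4/3$ are clean and correct.
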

\begin{proof}
 Item $A$ is proved in \cite{BVBC} (this part of the proof does not use the doubling property of the measure in \cite{BVBC}). Moreover, it is also proved in \cite{BVBC} that, if there exists $C>0$ such that for any open ball $B$, there exists a sub-sequence of $(B_n)_{n\in\mathbb{N}}$ $(L_{n,B})_{n\in\mathbb{N}}$ with, for any $n\in\mathbb{N},$ $L_{n,B}\subset B$, satisfying  \eqref{eq1} and \eqref{eq2}, then 
 $$\mu(\limsup_{n\to +\infty}B_n \cap B)\geq \frac{1}{C}\mu(B).$$
 The following lemma combined with Theorem \ref{equiac} finishes the proof of Proposition \ref{BCmodi}.
 \begin{lemme}
 \label{muboule}
 Let $E\subset \mathbb{R}^d$. Assume that there exists $0<c<1$ such that for any open ball $B$, $\mu(E\cap B)\geq c\mu(B),$ then $\mu(E)=1.$
 \end{lemme}  
 \begin{proof}
 Assume that $\mu(E)<1$ and set $A=\mathbb{R}^d \setminus E.$ By hypothesis, $\mu(A)>0.$ 
 
Let us recall the following density-lemma (which holds in metric sapces in which Besicovitch's theorem holds).
\begin{lemme}\cite{Be}
 \label{densibesi}
 {Let $m\in\mathcal{M}(\R^d)$, $0<c<1$ and $A$ be a Borel set with $m(A)>0.$ For every $r>0$, set}
 \begin{equation} 
 \label{defniv} 
 {A(r) =\left\{x\in A \ : \ \forall \tilde{r}\leq r, \ m(B(x,\tilde{r})\cap A)\geq c.m(B(x,\tilde{r}))\right\}} 
 \end{equation}
 {Then }
 \begin{equation}
{ m\left(\bigcup_{r>0}A(r)\right)=m(A).}
 \end{equation}
 \end{lemme} 
 
 By Lemma \ref{densibesi}, there exists an open ball $B$ such that $\mu(B)>0$ and 
 $$\mu(B\cap A)\geq (1-\frac{c}{2})\mu(B).$$
 This yields
 \begin{align*}
 \mu(E\cap B\cap A)&=\mu(E\cap B)+\mu(A\cap B)-\mu\left((E\cap B)\cup(A\cap B)\right)\\
 &\geq (c+1-\frac{c}{2}-1)\mu(B)=\frac{c}{2}\mu(B)>0,
 \end{align*}
 which implies $\mu(E\cap A)>0$, which is a contradiction.
 \end{proof}
Taking $c=\frac{1}{C}$ and applying Lemma \ref{muboule} finishes the proof of Proposition \ref{BCmodi}.
\end{proof}
\begin{remark}
A version of Proposition \ref{BCmodi} might also be useful in more general metric spaces. The only geometric  property we used to prove Proposition \ref{BCmodi} is actually Proposition \ref{besimodi} (which also implies Lemma \ref{densibesi}), so that Proposition \ref{BCmodi} actually holds in any direction-limited spaces as defined in \cite{Fed}.
\end{remark}

\section{proof of Theorem \ref{theoremextra}}

\label{sec-extrac} 

The following section is dedicated to the study of the properties one can ask an $\mu$-a.c sequence $(B_n)_{n \in\mathbb{N}}$ to verify, up to an $\mu$-a.c extraction. 

The concept of conditioned ubiquity was introduced by  Barral and Seuret in \cite{BS3}. It consists in asking the balls of the sequence $(B_n)$ to verify some specific properties with respect to the measure $\mu$. When investigating the Hausdorff dimension of some sets $(U_n)_{n\in\mathbb{N}}$, where $U_n \subset B_n$, in practical cases (when the measure carries some self-similarity), it turns out that when a lower-bound is found for $(U_n)_{n\in\mathbb{N}}$ using the fact that the sequence $(B_n)_{n\in\mathbb{N}}$ is of limsup of full $\mu$-measure, it is often quite easy to prove that $\limsup_{n\rightarrow+\infty}U_n$ has precisely the expected measure provided that the sequence $(B_n)$ verifies some specific properties with respect to $\mu$.

Note that in full generality, understanding the  optimality  of a bound as mentioned above, means  understanding very finely the behavior of the measure $\mu$ on the sets $U_n$ (the sequence $(B_n)$ being $\mu$-a.c). It will be proved in this article that, under mild  conditions on the sequence $(B_n)_{n\in\mathbb{N}}$, it is always  possible to give a natural upper-bound for $\dim_H (\limsup_{n\rightarrow+\infty}U_n)$. This upper-bound turns out to be optimal when the measure carries enough self-similarity (in particular it works for quasi-Bernoulli measures or fully supported self-similar measures).

\textbf{In this section, the balls $(B_n)_{n\in\mathbb{N}}$ are supposed to be pairwise distinct and such that $\vert B_n\vert \underset{n\rightarrow+\infty}{\rightarrow}0$.}

\subsection{Extraction of  weakly redundant $\mu$-a.c subsequences}
\label{secnest1}

The main result of this section is stated here.

\begin{proposition}
\label{extrawr}
Let $\mu \in \mathcal{M}(\mathbb{R}^d)$ and $(B_n)_{n\in\mathbb{N}}$ be a $\mu$-a.c sequence of balls $\lim_{n\rightarrow+\infty}\vert B_n \vert=0.$ There exists a subsequence $(B_{\psi(n)})_{n\in\mathbb{N}}$ of $(B_n)_{n\in\mathbb{N}}$ which is weakly redundant and $\mu$-a.c.
\end{proposition}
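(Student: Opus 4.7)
I will build $(B_{\psi(n)})$ by stacking, scale after scale, the coverings produced by Lemma~\ref{covO}. Since $|B_n|\to 0$, there is a strictly increasing sequence $(g_k)_{k\geq 0}$ with $|B_n|\leq 2^{-k}$ for every $n\geq g_k$. For each $k\geq 0$, I apply Lemma~\ref{covO} to the original $\mu$-a.c sequence with $\Omega=\mathbb{R}^d$ and starting index $g_k$; this delivers a (countable) family $\mathcal{F}_k\subset\{B_n\}_{n\geq g_k}$ of pairwise disjoint balls, of radii $\leq 2^{-k}$, such that $\mu\bigl(\bigcup_{B\in\mathcal{F}_k}B\bigr)=1$. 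I define $\psi$ to enumerate, in increasing order, the set $\bigcup_{k\geq 0}\{n:B_n\in\mathcal{F}_k\}$.

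\textbf{Weak redundancy.} Suppose $B_{\psi(n)}\in\mathcal{T}_j(\mathcal{B}_\psi)$ (so its radius lies in $(2^{-j-1},2^{-j}]$) and $B_{\psi(n)}\in\mathcal{F}_k$. Since balls of $\mathcal{F}_k$ have radius $\leq 2^{-k}$, the inequality $2^{-j-1}<2^{-k}$ forces $k\leq j$. Setting $\mathcal{T}_{j,k+1}(\mathcal{B}_\psi):=\mathcal{T}_j(\mathcal{B}_\psi)\cap\mathcal{F}_k$ for $k=0,\dots,j$ therefore gives a partition of $\mathcal{T}_j(\mathcal{B}_\psi)$ into at most $J_j\leq j+1$ sub-families, each composed of pairwise disjoint balls (inherited from $\mathcal{F}_k$). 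Hence $\log_2(J_j)/j\to 0$, so $(B_{\psi(n)})$ is weakly redundant.

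\textbf{The $\mu$-a.c property of the subsequence.} Fix an open set $\Omega$ and an index $g$. For $\eta>0$, define $\Omega_\eta=\{x\in\Omega:B(x,\eta)\subset\Omega\}$. Since $\Omega_\eta$ increases to $\Omega$ as $\eta\downarrow 0$, I may pick $K$ so large that both $g_K\geq g$ and $\mu(\Omega_{2^{-K+1}})\geq\tfrac{1}{2}\mu(\Omega)$. Any $B\in\mathcal{F}_K$ meeting $\Omega_{2^{-K+1}}$ has radius $\leq 2^{-K}$, so $B\subset\Omega$. The sub-family $\mathcal{F}_K^\Omega:=\{B\in\mathcal{F}_K:B\cap\Omega_{2^{-K+1}}\neq\emptyset\}$ consists of pairwise disjoint balls with original indices $\geq g_K\geq g$, is contained in $\Omega$, and covers $\Omega_{2^{-K+1}}$ up to a $\mu$-null set. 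Extracting a finite sub-collection provides a disjoint family of balls from the subsequence, inside $\Omega$, with total $\mu$-mass at least $\tfrac{1}{3}\mu(\Omega)$. Therefore $(B_{\psi(n)})$ is $\mu$-a.c with constant $C=1/3$.

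\textbf{Main obstacle.} The only genuine difficulty is in checking the $\mu$-a.c property for arbitrary open $\Omega$: Lemma~\ref{covO} provides coverings of $\mathbb{R}^d$, but when we restrict to $\Omega$, balls of $\mathcal{F}_K$ straddling $\partial\Omega$ must be discarded. The inner approximation $\Omega_\eta\nearrow\Omega$, combined with the bound $r\leq 2^{-K}$ on the balls of $\mathcal{F}_K$, lets us retain a fixed proportion of $\mu(\Omega)$ uniformly in $\Omega$ provided $K$ is chosen large enough — and the latter is precisely what also allows us to push indices past the prescribed threshold $g$.
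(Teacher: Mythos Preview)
Your proof is correct and follows essentially the same construction as the paper's: build scale-indexed pairwise-disjoint coverings $\mathcal{F}_k$ of $\mathbb{R}^d$ via Lemma~\ref{covO}, take their union as the subsequence, deduce weak redundancy from the radius bound $|B|\leq 2^{-k}$ on $\mathcal{F}_k$, and verify the $\mu$-a.c property by inner approximation of $\Omega$. One small slip: requiring $g_K\geq g$ only guarantees that the \emph{original} indices of balls in $\mathcal{F}_K$ are $\geq g$, whereas Definition~\ref{ac} applied to $(B_{\psi(n)})_{n}$ asks for \emph{subsequence} indices $\geq g$; this is fixed by choosing $K$ large enough that $g_K>\psi(g-1)$ (the paper states this condition as $\mathcal{F}_K\subset\{B_{\psi(n)}\}_{n\geq g}$), and similarly your ``partition'' $\mathcal{T}_{j,k+1}=\mathcal{T}_j\cap\mathcal{F}_k$ may overlap if a ball lies in several $\mathcal{F}_k$, but assigning each ball to its smallest such $k$ repairs this immediately.
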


\begin{proof}
Let $g_k \in\mathbb{N}$ be large enough so that $\forall n\geq g_k$, $\vert B_n \vert  \leq 2^{-k}.$  By Proposition \ref{covO}, applied with the sequence $(B_n)_{n\in\mathbb{N}}$, $\Omega=\mathbb{R}^d$ for any $k \in \mathbb{N}$, there exists a sub-sequence $(B_{(n,k)})$ of $\left\{B_n\right\}_{n\geq g_k}^{\mathbb{N}}$ satisfying  
\begin{enumerate}
\item $\forall 1\leq n_1 \neq n_2$, $B_{(n_1 ,k)}\cap B_{(n_2 ,k)}=\emptyset,$\mk
\item $\mu\Big(\bigcup_{n\in\mathbb{N}}B_{(n,k)}\Big)=1.$\mk
\end{enumerate} 
 Define $\mathcal{B}_{\psi}=(B_{\psi(n)})_{n\in\mathbb{N}}$ as the sub-sequence of balls corresponding to  $\bigcup_{k\in\mathbb{N}}\left\{B_{(n,k)}\right\}_{n\in\mathbb{N}}.$

Since  the following inclusion holds
\begin{equation}
\label{reuint}
\bigcap_{k\in\mathbb{N}}\bigcup_{n\in\mathbb{N}}B_{(n,k)}\subset \limsup_{n\to+\infty}B_{\psi(n)},
\end{equation}
 by item $(2)$ one has
$$\mu(\limsup_{n\to+\infty}B_{\psi(n)})=1.$$

Note that,  for all $k\in\mathbb{N}$, all $B\in \left\{B_{(n,k)}\right\}_{n\in\mathbb{N}}$, $\vert B\vert \leq 2^{-k}$. Following the notation of Definition \ref{wr}, for any $k\in\mathbb{N}$, $\mathcal{T}_k (\mathcal{B}_{\psi})$ can contain only balls of the sequence  of the $k$ first families $\left\{B_{(n,k)}\right\}_{n\in\mathbb{N}}$, which are composed of pairwise disjoint balls. This proves that $\mathcal{T}_k (\mathcal{B}_{\psi})$ can be sorted in at most $k+1$ families of  pairwise disjoint  balls. In particular, $\mathcal{B}_{\psi}$ is weakly redundant.  

It remains to show that $(B_{\psi(n)})_{n\in\mathbb{N}}$ is $\mu$-a.c.

Let $\Omega$ be an open set and $g\in\mathbb{N}$. One will extract from $\mathcal{B}_{\psi}$ a finite number of balls satisfying the condition of Definition \ref{ac}. 

 There exists $k_0 $ so large that, $\mu\big(\left\{x\in \Omega(x,2^{-k_0+1})\right\}\big)\geq \frac{3\mu(\Omega)}{4}$ for any $k\geq k_0$, 
   $\left\{B_{(n,k)}\right\}_{n\in\mathbb{N}}\subset \left\{B_{\psi(n)}\right\}_{n\geq g}$  and $\mu(\limsup_{n\rightarrow+\infty}B_{\psi(n)}\cap \Omega )\geq \frac{3\mu(\Omega)}{4}$. Setting
 $$\widehat{E}=\left\{x\in\limsup_{n\rightarrow+\infty}B_{\psi(n)}\cap \Omega: B(x,2^{-k_0 +1})\subset \Omega\right\},$$ 
 it holds that
$$\mu(\widehat{E})\geq \frac{1}{2}\mu(\Omega).$$

Recalling  \eqref{reuint}, for every $x\in \widehat{E}$, consider $B_{x}$, the  ball of $\left\{B_{(n,k_0)}\right\}_{n\in\mathbb{N}}$ containing $x$. Note that, because for  $B\in\left\{B_{(n,k_0)}\right\}_{n\in\mathbb{N}}$, $\vert B\vert \leq 2^{-k_0}$, one has $B_x \subset B(x,2^{-k_0 +1})\subset \Omega .$ Set $\mathcal{F}_1 =\left\{B_x :x\in\widehat{E}\right\}.$ The set  $\mathcal{F}_1$ is composed of pairwise disjoint balls (by item $(1)$ above) of $\left\{B_{\psi(n)}\right\}_{n\geq g} $ included in $\Omega$ and such that

\begin{equation}
\mu\Big(\bigcup_{L\in\mathcal{F}_1}L\Big)\geq \mu(\widehat{E})\geq \frac{1}{2}\mu(\Omega).
\end{equation}

Using the $\sigma$-additivity of $\mu$ concludes the proof.
\end{proof}
 \medskip
 
\subsection{Extraction of sub-sequences of balls with conditioned measure} 
 \label{seccondi}
 
 Let $\mu \in\mathcal{M}(\mathbb{R}^d)$ and $(B_n)_{n\in\mathbb{N}}$ be an $\mu$-a.c sequence of balls. 
 
 This part aims to understand what condition can be assumed about the measure of the ball of the sequence $(B_n)_{n\in\mathbb{N}}$ in general under the $\mu$-a.c condition.

More precisely, item $(2)$ of Theorem \ref{theoremextra} is proved.

\begin{proposition}
\label{mainextraprop}
Let $\mu\in\mathcal{M}(\mathbb{R}^d)$. For any sequence of balls $(B_n)_{n\in\mathbb{N}}$ satisfying $\mu(\limsup_{n\rightarrow+\infty}vB_n)=1$ for some $0<v<1$, there exists an $\mu$-a.c sub-sequence $(B_{\phi(n)})_{n\in\mathbb{N}}$ verifying
\begin{equation*}
\label{condimesu1}
  \underline{\dim}_H (\mu)\leq \liminf_{n\rightarrow+\infty}\frac{\log \mu(B_{\phi(n)})}{\log\vert B_{\phi(n)}\vert}\leq \limsup_{n\rightarrow+\infty}\frac{\log \mu(B_{\phi(n)})}{\log \vert B_{\phi(n)}\vert}\leq \overline{\dim}_P (\mu).
  \end{equation*}
\end{proposition}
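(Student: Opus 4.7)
The plan is the following. Set $\alpha = \underline{\dim}_H(\mu)$ and $\beta = \overline{\dim}_P(\mu)$; by the essential inf/sup characterisation of these quantities, $\underline{\dim}(\mu,x) \geq \alpha$ and $\overline{\dim}(\mu,x) \leq \beta$ for $\mu$-a.e.\ $x$. I will extract, at each ``scale'' $k \geq 1$, a finite block of balls from $(B_n)$ that covers most of the set where the local behaviour of $\mu$ is within $1/k$ of the range $[\alpha, \beta]$, then concatenate these blocks in order of $k$.

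Concretely, for $k \geq 1$ put $\alpha_k = (\alpha - 1/k)_{+}$, $\beta_k = \beta + 1/k$ and
\[
E_{k, m} = \bigl\{ x : \forall\, r \leq 2^{-m},\ r^{\beta_k} \leq \mu(B(x, r)) \leq r^{\alpha_k} \bigr\}.
\]
Since $\mu(E_{k, m}) \nearrow 1$ as $m \to \infty$, pick $m_k \nearrow \infty$ with $\mu(E_{k, m_k}) \geq 1 - 2^{-k}$, and $g_k$ with $r_n \leq 2^{-m_k - 1}$ for every $n \geq g_k$. The sub-family of interest is
\[
\mathcal{B}_k = \bigl\{ B_n : n \geq g_k,\ \exists\, x \in E_{k, m_k} \text{ with } x \in vB_n \bigr\};
\]
because $\mu(\limsup_n vB_n) = 1$ and the radii along $\{n : x \in vB_n\}$ can be made arbitrarily small, one has $E_{k, m_k} \subset \limsup_{B \in \mathcal{B}_k} vB$ up to a $\mu$-null set. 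Hence the normalised probability $\widetilde\mu_k := \mu(\,\cdot\, \cap E_{k, m_k})/\mu(E_{k, m_k})$ puts full mass on that $\limsup$, and Theorem \ref{equiac}(2) gives that $\mathcal{B}_k$ is $\widetilde\mu_k$-a.c. Applying Lemma \ref{covO} to $(\mathcal{B}_k, \widetilde\mu_k)$ produces a countable pairwise disjoint sub-family of $\mathcal{B}_k$ covering $E_{k, m_k}$ $\mu$-almost entirely; truncating by $\sigma$-additivity yields a finite block $B_{(1, k)}, \ldots, B_{(N_k, k)}$ with $\mu(E_{k, m_k} \setminus \bigcup_j B_{(j, k)}) \leq 2^{-k}$. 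Finally I enumerate $(B_{\phi(n)})$ by concatenating these finite blocks in the order $k = 1, 2, \ldots$.

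For the measure bound, each $B_{(j, k)}$ has an associated $x \in E_{k, m_k}$ with $x \in vB_{(j, k)}$, giving $B(x, (1-v)r_{(j, k)}) \subset B_{(j, k)} \subset B(x, (1+v)r_{(j, k)})$ and $(1+v)r_{(j, k)} \leq 2^{-m_k}$, whence
\[
\bigl((1-v)r_{(j, k)}\bigr)^{\beta_k} \leq \mu(B_{(j, k)}) \leq \bigl((1+v)r_{(j, k)}\bigr)^{\alpha_k}.
\]
Dividing logs by $\log |B_{(j, k)}| = \log(2 r_{(j, k)}) < 0$, each block-$k$ ball gives a ratio $\tfrac{\log \mu(B_{(j,k)})}{\log|B_{(j,k)}|}$ lying in $[\alpha_k(1 - \delta_k),\, \beta_k(1 + \delta_k)]$ with $\delta_k \to 0$ controlled by $m_k \to \infty$. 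Since each block is finite, the tail of $(B_{\phi(n)})$ consists of balls from arbitrarily high blocks, so $\liminf \geq \alpha$ and $\limsup \leq \beta$.

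For the $\mu$-a.c.\ property, given an open $\Omega$ with $\mu(\Omega) > 0$ and $g \in \mathbb{N}$, choose $k \geq g$ large enough that (ii) the deep interior $\Omega^{(k)} := \{x \in \Omega : d(x, \partial\Omega) > 2^{-m_k + 1}\}$ satisfies $\mu(\Omega^{(k)}) \geq \tfrac{7}{8}\mu(\Omega)$, (iii) $\mu(E_{k, m_k}) \geq 1 - \tfrac{1}{8}\mu(\Omega)$, and (iv) $2^{-k} \leq \tfrac{1}{8}\mu(\Omega)$. A short computation then gives $\mu\bigl(\Omega^{(k)} \cap E_{k, m_k} \cap \bigcup_j B_{(j, k)}\bigr) \geq \tfrac{5}{8}\mu(\Omega)$; moreover every step-$k$ ball meeting $\Omega^{(k)}$ has diameter $\leq 2^{-m_k}$ and therefore lies inside $\Omega$, while $k \geq g$ ensures the block-$k$ balls have $\phi$-index $\geq g$ (each prior block contributes at least one position). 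The resulting finite, pairwise disjoint sub-collection sits in $\Omega$ with $\mu$-mass $\geq \mu(\Omega)/2$, giving $\mu$-a.c.\ with constant $\tfrac{1}{2}$. The main obstacle is reconciling the measure bound (which must hold for \emph{all but finitely many} of the $B_{\phi(n)}$) with the $\mu$-a.c.\ property (which must hold uniformly over \emph{arbitrary} open $\Omega$); the deep-interior trick $\Omega^{(k)}$ together with block-wise (rather than diagonal) enumeration is what lets both requirements be satisfied at the same scale $k$.
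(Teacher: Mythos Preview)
Your argument is correct and arrives at the conclusion by a route that is close in spirit to the paper's but organised more economically.

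The paper proceeds in two stages. First it proves two separate lemmas: Lemma~\ref{gscainf} (for any $\varepsilon>0$ there is a $\mu$-a.c.\ subsequence with $\mu(B_{\phi(n)})\le r_{\phi(n)}^{\alpha-\varepsilon}$) and Lemma~\ref{gscaup} (for any $\varepsilon>0$ there is a subsequence, still with $\mu(\limsup v'B_{\phi(n)})=1$, satisfying $\mu(B_{\phi(n)})\ge r_{\phi(n)}^{\beta+\varepsilon}$). Then it runs a two-index recursive scheme: at ``step~$k$, sub-step~$i$'' it applies both lemmas with $\varepsilon=\varepsilon_{i+k}$ and uses Lemma~\ref{covO} on the complement of what has already been covered, producing families $\mathcal F_{k,i}$ whose union $\mathcal F_k$ covers $\mathbb R^d$ up to a $\mu$-null set. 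The $\mu$-a.c.\ property of the resulting subsequence is then read off from the full-measure set $\bigcap_k\bigcup_{L\in\mathcal F_k}L$.

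You collapse both stages. By working directly with the sets $E_{k,m_k}$ on which \emph{both} local-dimension inequalities hold at scale $2^{-m_k}$, you obtain the two-sided measure bound in a single pass, with no need for the separate Lemmas~\ref{gscainf}/\ref{gscaup}. And by normalising $\mu$ on $E_{k,m_k}$ and invoking Theorem~\ref{equiac}(2) once, you extract a single finite block per~$k$ rather than an infinite sub-step hierarchy. Your verification of $\mu$-a.c.\ via the deep-interior sets $\Omega^{(k)}$ plays the same role as the paper's argument based on $\bigcap_k\bigcup_{L\in\mathcal F_k}L$, and your observation that $k\ge g$ forces $\phi$-index $\ge g$ (since each block is nonempty) is what the paper achieves by taking $\mathcal F_{k,i}\subset\{B_n\}_{n\ge k}$. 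The paper's modular presentation isolates the upper and lower estimates as standalone lemmas that could be reused elsewhere; your version is shorter and avoids the double diagonalisation, at the cost of that modularity.
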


 \begin{remarque}
For the left part of \eqref{condimesu1}, the proof actually only uses the fact that $\mu (\limsup_{n\rightarrow+\infty}B_n)=1.$
\end{remarque}

Let us introduce some useful sets to prove Lemma \ref{gscainf} and Lemma \ref{gscaup}, which are key in order to prove \eqref{condimesu1}.


{
\begin{definition} 
\label{emudef}
 Let $0\leq \alpha\leq\gamma$ be  real numbers, $\mu\in\mathcal{M}(\mathbb{R}^d)$, and $\varepsilon,\rho>0$ two positive real numbers. Then define 
\begin{equation}
\label{emuhat}
 {{E}_{\mu}^{[\alpha,\gamma],\rho,\varepsilon}=\left\{x\in\mathbb{R}^d : \ \underline\dim (\mu,x)\in[\alpha,\gamma] \text{ and }\forall r\leq \rho, \  \mu(B(x,r))\leq r^{\underline{\dim} (\mu,x)-\varepsilon}\right\}},
\end{equation} 
\begin{equation}
 \label{fmu}
 F_{\mu}^{[\alpha,\beta],\rho,\varepsilon}=\left\{x\in \mathbb{R}^d :  \overline{\dim} (\mu,x)\in [\alpha, \beta]\text{ and }\forall r<\rho, \ \mu(B(x,r))\geq r^{\overline{\dim} (\mu,x)+\varepsilon}\right\}.
\end{equation} 
and
\begin{align} 
\label{emut}
 E_{\mu}^{[\alpha,\gamma],\varepsilon}   =\bigcup_{n\geq 1}E_{\mu}^{[\alpha,\gamma],\frac{1}{n},\varepsilon}   \ \ \ \mbox{ and }\ \ \ \  F_{\mu}^{[\alpha,\gamma],\varepsilon}  =\bigcup_{n\geq 1}F_{\mu}^{[\alpha,\gamma],\frac{1}{n},\varepsilon}.
\end{align}
 \end{definition}} 
{The following statements are easily deduced from Definition \ref{dim}. }


\begin{proposition}
\label{résu}
For every $\mu\in\mathcal{M}(\mathbb{R}^d)$, $\rho>0$, every $0\leq \alpha \leq \gamma$ and  $\varepsilon>0$,  
\begin{align*}
\mu(E_{\mu}^{[\alpha,\gamma],\varepsilon}) & =\mu(\left\{x:\underline{\dim} (\mu,x)\in [\alpha,\gamma]\right\})\\
\mu(F_{\mu}^{[\alpha,\gamma],\varepsilon})& =\mu(\left\{x:\overline{\dim} (\mu,x)\in [\alpha,\gamma]\right\})
\end{align*}
 and
  \begin{align}
 \label{mlecoro}
 E_{\mu}^{[\alpha,\gamma],\rho,\varepsilon} & \subset \left\{x\in \mathbb{R}^d : \ \forall r\leq \rho, \  \mu(B(x,r))\leq r^{\alpha-\varepsilon}\right\}\\
\nonumber  F_{\mu}^{[\alpha,\gamma],\rho,\varepsilon} & \subset \left\{x\in \mathbb{R}^d : \ \forall r\leq \rho, \ \mu(B(x,r))\geq r^{\gamma+\varepsilon}\right\}.
\end{align}
Furthermore, for $\alpha_1=\underline{\dim}_H (\mu)$ and $\gamma_1=\mbox{supess}_{\mu}(\underline{\dim} (\mu,x))$, one has
\begin{equation}
\label{mle}
\mu(E_{\mu}^{[\alpha_1,\gamma_1],\varepsilon})=1.
\end{equation}  
Similarly, for  $\alpha_2=\mbox{infess}_{\mu} (\overline{\dim} (\mu,x))$ and $\gamma_2=\overline{\dim}_P (\mu)$, one has 
\begin{equation}
\label{mleup}
\mu\left(F_{\mu}^{[\alpha_2,\gamma_2],\varepsilon}\right)=1.
\end{equation}

\end{proposition}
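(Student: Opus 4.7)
The proposition decomposes into three independent parts: the equalities involving $E_{\mu}^{[\alpha,\gamma],\varepsilon}$ and $F_{\mu}^{[\alpha,\gamma],\varepsilon}$, the pointwise inclusions \eqref{mlecoro}, and the full-measure identities \eqref{mle}, \eqref{mleup}. All three are direct consequences of Definition \ref{emudef} combined with the monotonicity of $s\mapsto r^s$ on $(0,1)$ and the essential-range characterizations recalled right after Definition \ref{dim}. The plan is to treat them in that order, as each statement feeds the next.

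For the first equalities, I would argue by double inclusion. The inclusion $E_{\mu}^{[\alpha,\gamma],\varepsilon}\subset\{x:\underline{\dim}(\mu,x)\in[\alpha,\gamma]\}$ is built into Definition \ref{emudef}, so nothing needs to be done. For the converse, I would fix $x$ with $\underline{\dim}(\mu,x)=s\in[\alpha,\gamma]$, and use the $\liminf$ definition to produce a threshold $\rho_x>0$ such that $\log\mu(B(x,r))/\log r>s-\varepsilon$ for every $r\leq \rho_x$; because $\log r<0$, this rearranges to $\mu(B(x,r))\leq r^{s-\varepsilon}=r^{\underline{\dim}(\mu,x)-\varepsilon}$, placing $x$ in $E_{\mu}^{[\alpha,\gamma],1/n_x,\varepsilon}\subset E_{\mu}^{[\alpha,\gamma],\varepsilon}$ for any integer $n_x\geq \rho_x^{-1}$. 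The analogous argument for $F_{\mu}^{[\alpha,\gamma],\varepsilon}$ uses the $\limsup$ definition of $\overline{\dim}(\mu,x)$ and reverses the direction of the $\varepsilon$-cushion.

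The inclusions \eqref{mlecoro} then follow from the elementary observation that, for $r\leq \rho<1$ and $x\in E_{\mu}^{[\alpha,\gamma],\rho,\varepsilon}$, one has $\underline{\dim}(\mu,x)\geq \alpha$, whence $r^{\underline{\dim}(\mu,x)-\varepsilon}\leq r^{\alpha-\varepsilon}$; chaining this with the defining inequality of $E_{\mu}^{[\alpha,\gamma],\rho,\varepsilon}$ yields $\mu(B(x,r))\leq r^{\alpha-\varepsilon}$. The $F$-case is symmetric, using $\overline{\dim}(\mu,x)\leq \gamma$ to flip the inequality. Finally, \eqref{mle} and \eqref{mleup} are obtained by plugging $(\alpha,\gamma)=(\alpha_1,\gamma_1)$ (resp.\ $(\alpha_2,\gamma_2)$) into the equalities already established and invoking the definitions of essential infimum and essential supremum, which imply that $\{x:\underline{\dim}(\mu,x)\in[\alpha_1,\gamma_1]\}$ and $\{x:\overline{\dim}(\mu,x)\in[\alpha_2,\gamma_2]\}$ both carry full $\mu$-measure. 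The only point requiring any care is the sign bookkeeping when converting a $\liminf$ or $\limsup$ of $\log\mu(B(x,r))/\log r$ into a power-law bound, since $\log r$ is negative for small $r$ and reverses inequalities; I do not foresee any substantive obstacle beyond this routine manipulation.
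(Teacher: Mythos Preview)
Your proposal is correct and follows essentially the same route as the paper: both arguments use the definition of $\underline{\dim}(\mu,x)$ and $\overline{\dim}(\mu,x)$ to produce a threshold $r_x$ at which the power-law bound $r^{\overline{\dim}(\mu,x)+\varepsilon}\leq \mu(B(x,r))\leq r^{\underline{\dim}(\mu,x)-\varepsilon}$ holds, then invoke the monotonicity of $s\mapsto r^s$ together with the essential-range definitions of $\underline{\dim}_H(\mu)$ and $\overline{\dim}_P(\mu)$. Your write-up is in fact more carefully organized than the paper's, which compresses all three parts into a couple of lines; in particular your explicit flag that the inclusion \eqref{mlecoro} uses $r<1$ for the monotonicity step is a point the paper leaves implicit.
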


 \begin{proof}
{ For any $x \in \mathbb{R}^d$, for any $\varepsilon>0$, there exists $r_x >0$ such that, $\forall r\leq r_x $, $r^{\overline{\dim} (\mu,x)+\varepsilon}\leq \mu(B(x,r))\leq r^{\underline{\dim} (\mu,x)-\varepsilon}.$ This implies 
\begin{equation*}
F_{\mu}^{[\alpha,\gamma],\rho,\varepsilon}\subset \left\{x\in \mathbb{R}^d : \ \forall r\leq \rho, \ \mu(B(x,r))\geq r^{\gamma+\varepsilon}\right\},
\end{equation*} 
 \begin{equation*} 
 E_{\mu}^{[\alpha,\gamma],\rho,\varepsilon}\subset \left\{x\in\mathbb{R}^d : \ \forall r\leq \rho, \ \mu(B(x,r))\leq r^{\alpha-\varepsilon}\right\},
 \end{equation*}}
and
 $$E_{\mu}^{[\alpha,\gamma],\varepsilon}=\left\{x:\underline{\dim} (\mu,x)\in [\alpha,\gamma]\right\}  \text{ and } F_{\mu}^{[\alpha,\gamma],\varepsilon}=\left\{x:\overline{\dim} (\mu,x)\in [\alpha,\gamma]\right\}.$$
 {
 Since $$\mu([\mbox{infess}_{\mu} (\underline{\dim} (\mu,x)),\mbox{supess}_{\mu}(\underline{\dim} (\mu,x))])=1$$ and $$\mu([\mbox{infess}_{ \mu} (\overline{\dim} (\mu,x)),\mbox{supess}_{\mu}(\overline{\dim} (\mu,x))])=1,$$}
 { recalling Definition \ref{dim}, it holds that, following the notation of Proposition \ref{résu},
 $$\mu\left(E_{\mu}^{[\alpha_1,\gamma_1],\varepsilon}\right)=\mu\left(F_{\mu}^{[\alpha_2,\gamma_2],\varepsilon}\right)=1.$$ }
 \end{proof}

Before showing Proposition \ref{mainextraprop}, let us start by the two following Lemmas \ref{gscainf} and \ref{gscaup}. The first one will be used to prove the left part of the inequality \eqref{condimesu} while the second one will be useful to prove the right part.
\begin{lemme} 
\label{gscainf}
Let $\mu \in\mathcal{M}(\mathbb{R}^d)$ and $\mathcal{B} =(B_n :=B(x_n ,r_n))_{n\in\mathbb{N}}$ be a  $\mu$-a.c sequence of balls of $ \mathbb{R}^d$ with $\lim_{n\to +\infty} r_{n}= 0$.

 For any $\varepsilon>0$, there exists a  $\mu$-a.c  subsequence $(B_{\phi(n)})_{n\in\mathbb{N}}$ of $\mathcal{B} $ such that for every $n\in\mathbb{N}$, $\mu(B_{\phi(n)})\leq (r_{\phi(n)})^{ \underline{\dim}_H(\mu)-\varepsilon}.$
\end{lemme}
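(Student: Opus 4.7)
My plan is to define $(B_{\phi(n)})$ \emph{a priori} as the subsequence comprising exactly those $B_n$ that satisfy the target bound $\mu(B_n) \leq r_n^{\alpha_1 - \varepsilon}$, where $\alpha_1 := \underline{\dim}_H(\mu)$, and then to verify that this pre-selected sub-sequence is still $\mu$-a.c. By Proposition \ref{résu}, the set $E_\mu^{[\alpha_1,\gamma_1],\varepsilon/2}$ has full $\mu$-measure and is the increasing union of the sets $E^k := E_\mu^{[\alpha_1,\gamma_1],1/k,\varepsilon/2}$; on $E^k$, inclusion \eqref{mlecoro} gives $\mu(B(y,r)) \leq r^{\alpha_1 - \varepsilon/2}$ for every $r \leq 1/k$. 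In particular, for any $\eta > 0$ one can choose $k_0$ with $\mu(E^{k_0}) \geq 1 - \eta$.

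To verify that $(B_{\phi(n)})$ satisfies Definition \ref{ac}, fix an open set $\Omega$ with $\mu(\Omega) > 0$ (otherwise there is nothing to check) and an integer $g$. Choose $k_0$ so large that $\mu(E^{k_0}) \geq 1 - \mu(\Omega)/4$, and then choose $g' \geq g$ so large that every index $n \geq g'$ satisfies both $2 r_n \leq 1/k_0$ and $(2 r_n)^{\alpha_1 - \varepsilon/2} \leq r_n^{\alpha_1 - \varepsilon}$ (the latter being a uniform smallness requirement on $r_n$, immediate from $r_n \to 0$). Now apply Lemma \ref{covO} to the original sequence $(B_n)$ at threshold $g'$: this produces pairwise disjoint balls $B_{n_1},\ldots,B_{n_N} \subset \Omega$, all with index $\geq g$, and with $\mu\bigl(\bigcup_i B_{n_i}\bigr) \geq \tfrac{3}{4}\mu(\Omega)$.

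Next, split these $N$ balls into \emph{good} ones, which intersect $E^{k_0}$, and \emph{bad} ones, which do not. If $B_{n_i}$ is good, pick $y \in B_{n_i} \cap E^{k_0}$; since $B_{n_i} \subset B(y, 2r_{n_i})$ and $2 r_{n_i} \leq 1/k_0$, one obtains
\begin{equation*}
\mu(B_{n_i}) \leq \mu(B(y, 2r_{n_i})) \leq (2 r_{n_i})^{\alpha_1 - \varepsilon/2} \leq r_{n_i}^{\alpha_1 - \varepsilon},
\end{equation*}
so $B_{n_i}$ already belongs to the pre-selected sub-sequence $(B_{\phi(n)})$. The bad balls are contained in $\mathbb{R}^d \setminus E^{k_0}$, whose total $\mu$-mass is at most $\mu(\Omega)/4$. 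Consequently, the good balls have union of $\mu$-measure at least $\tfrac{3}{4}\mu(\Omega) - \tfrac{1}{4}\mu(\Omega) = \tfrac{1}{2}\mu(\Omega)$, which exhibits $(B_{\phi(n)})$ as $\mu$-a.c. with constant $C = 1/2$.

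The main subtle point is that the auxiliary thresholds $k_0$ and $g'$ genuinely depend on the particular open set $\Omega$ (through $\mu(\Omega)$), whereas the definition of a $\mu$-a.c. sequence asks for a single sub-sequence with a uniform constant. This is reconciled by the key trick of the argument: $(B_{\phi(n)})$ is defined once and for all by the intrinsic measure condition on each individual ball, so the $\Omega$-dependent choices only serve to witness the asymptotic covering property and do not alter the sub-sequence itself. A minor bookkeeping point is the edge case $\alpha_1 \leq \varepsilon$, in which the conclusion becomes trivial; otherwise the inequality $(2 r_n)^{\alpha_1 - \varepsilon/2} \leq r_n^{\alpha_1 - \varepsilon}$ is automatic once $r_n$ is small enough, which is guaranteed by enlarging $g'$.
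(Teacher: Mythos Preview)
Your proof is correct and follows essentially the same route as the paper: both define the subsequence \emph{a priori} by the measure condition $\mu(B_n)\le r_n^{\alpha_1-\varepsilon}$, invoke Proposition~\ref{résu} to get a full-measure set on which $\mu(B(y,r))\le r^{\alpha_1-\varepsilon/2}$ for small $r$, apply Lemma~\ref{covO} to produce disjoint balls covering a fixed fraction of $\Omega$, and then observe via the inclusion $B_{n_i}\subset B(y,2r_{n_i})$ that any ball meeting the good set already lies in the subsequence. The only differences are bookkeeping: you use a global threshold set $E^{k_0}$ with $\mu(E^{k_0})\ge 1-\mu(\Omega)/4$ and obtain constant $C=1/2$, whereas the paper localises to $E_\Omega\subset\Omega$ with $\mu(E_\Omega)\ge \tfrac34\mu(\Omega)$ and ends with $C=1/4$.
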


\begin{proof} 
Set $\alpha= \underline{\dim}_H (\mu)$ and $\gamma=suppess_{\mu}(\underline{\dim} (\mu,x)).$

Let $\Omega$ be an open set and $\ep>0$. By  \eqref{mle},  $\mu(E^{[\alpha,\gamma],\frac{\varepsilon }{2}}_{\mu })=1$ and $\mu  ( \Omega\cap  E^{[\alpha,\gamma],\frac{\varepsilon }{2}}_{\mu}) = \mu(\Omega)$.

For every $x\in \Omega\cap  E^{[\alpha,\gamma],\frac{\varepsilon }{2}}_{\mu}$,  there exists $r_x>0 $ such that $B(x,r_x) \subset \Omega$ and $x\in E_{\mu}^{[\alpha,\gamma],r_x ,\frac{\varepsilon}{2}}.$ 

Recall  \eqref{emut} and that the sets $E_{\mu}^{[\alpha,\gamma], \rho ,\varepsilon}$ are non-increasing in $\rho$. In particular there exists  $\rho_{\Omega}>0$ such that the set $E_\Omega:=\left\{x\in \Omega\cap E^{[\alpha,\gamma],\rho_{\Omega},\frac{\varepsilon }{2}}_{\mu} : \ r_x\leq \rho_\Omega \right\}$ verifies
\begin{equation}
\label{muomegainf}
\mu(E_\Omega ) \geq \frac{3\mu(\Omega)}{4}.
\end{equation}

 Let $g\in\mathbb{N}$. Applying Lemma \ref{covO} 
to $\Omega$, the sequence $(B_n)$ and the measure $m$, there exists $N_\Omega$ as well as  $g\leq n_{1}\leq ...  \leq n_{N_{\Omega}}$ verifying:
\begin{enumerate}
\item   for every $  1\leq i\neq j\leq N_\Omega,$   $B_{n_i}\cap B_{n_j}=\emptyset$, 
\item  for every $  1\leq i\leq N_\Omega$, $ 2r_{n_i} \leq \rho_\Omega $ and $2^{\alpha -\frac{\varepsilon}{2}}\leq r_{n_i}^{-\frac{\varepsilon}{2}}$, 
\item $\mu(\bigcup_{1\leq i\leq N_\Omega}B_{n_i})\geq \frac{\mu(\Omega)}{2}.$ 
\end{enumerate}
We may assume that $\mu(B_{n_i}) >0$ for every $i$, otherwise $B_{n_i}$ does not play any role.

Item $(3)$ together with \eqref{muomegainf} implies that
$$\mu\left (\bigcup_{1\leq i\leq N_\Omega}B_{n_i}\cap E_\Omega \right)\geq \frac{\mu(\Omega)}{4}.$$

 Furthermore, for every $1\leq i\leq N_\Omega$ verifying $B_{n_i}\cap E_\Omega \neq \emptyset $, it holds that $0<\mu(B_{n_i}) \leq (r_{n_i})^{\alpha-\varepsilon }.$ Indeed, let $x\in B_{n_i}\cap E_\Omega$. By item (2), $B_{n_i} \subset B(x,2r_{n_i})$, and by \eqref{emuhat}  , item $(2)$, and  \eqref{mlecoro}, it holds that
$$\mu(B_{n_i})\leq \mu(B(x,2r_{n_i}))\leq (2r_{n_i})^{\alpha-\frac{\varepsilon}{2}}\leq (r_{n_i})^{\alpha-\varepsilon }.$$

Writing $\mathcal{B}'=\left\{B_n : \mu(B_{n})\leq r_n ^{\alpha-\varepsilon }\right\}$, the argument above shows that only balls of   $\mathcal{B}'$ have been used to cover $\Omega$ . This is satisfied for every open set $\Omega$, so that  $\mathcal{B}'$ is a sub-sequence of $\mathcal{B}$  satisfying the condition  of Definition \ref{ac}, which concludes the proof of Lemma \ref{gscainf}.
 \end{proof}

 \begin{lemme} 
\label{gscaup}
Let  $\mu \in\mathcal{M}(\mathbb{R}^d)$,  $v<1$ and $\mathcal{B} =(B_n :=B(x_n ,r_n))_{n\in\mathbb{N}}$ a sequence of balls of $ \mathbb{R}^d$ verifying $\mu(\limsup_{n\to +\infty}vB_n)=1$.

For all $\varepsilon>0$, there exists a sub-sequence $(B_{\phi(n)})_{n\in\mathbb{N}}$  of  $\mathcal{B} $ as well as $0<v^{\prime}<1$ such that $\mu(\limsup_{n\rightarrow+\infty}v^{\prime}B_{\phi(n)})=1$  and for all $n\in\mathbb{N}$, one has $\mu(B_{\phi(n)})\geq (r_{\phi(n)})^{\overline{\dim}_H (\mu)+\varepsilon}$.    
\end{lemme}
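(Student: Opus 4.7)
The plan is to mirror Lemma \ref{gscainf}, but with the lower-local-dimension sets $E_\mu^{[\alpha_1,\gamma_1],\rho,\varepsilon/2}$ replaced by the upper-local-dimension sets $F_\mu^{[\alpha_2,\gamma],\rho,\varepsilon/2}$ from Definition \ref{emudef}. The crucial new ingredient, absent in the previous lemma, is the hypothesis $\mu(\limsup vB_n)=1$ with $v<1$: the positive gap $1-v$ forces every $B_n$ that catches a good point in $vB_n$ to contain a concentric sub-ball of radius $(1-v)r_n$, which is comparable to $r_n$ and on which the lower bound encoded by $F_\mu$ applies. This is exactly what turns the pointwise lower estimate at $x\in F_\mu$ into a usable lower bound on $\mu(B_n)$.

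Set $\gamma:=\overline{\dim}_P(\mu)$ and $\alpha_2:=\mathrm{infess}_{\mu}(\overline{\dim}(\mu,x))$. By \eqref{mleup}, the set $F:=F_\mu^{[\alpha_2,\gamma],\varepsilon/2}=\bigcup_{k\ge 1}F_k$, with $F_k:=F_\mu^{[\alpha_2,\gamma],1/k,\varepsilon/2}$, has full $\mu$-measure. Define the extraction $\phi$ by
\[
\{B_{\phi(n)}\}_{n\in\mathbb{N}}:=\bigl\{B_n\,:\,\mu(B_n)\geq r_n^{\gamma+\varepsilon}\bigr\}.
\]
The pointwise estimate claimed in the conclusion of the lemma is then immediate from the definition of $\phi$. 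The task reduces to showing that $\mu\bigl(\limsup_n vB_{\phi(n)}\bigr)=1$, after which one may simply take $v':=v$.

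For the limsup claim, fix $k\geq 1$ and $x\in F_k\cap \limsup_n vB_n$. For each $n$ with $x\in vB_n$, the containment $\|x-x_n\|_\infty \leq v r_n$ gives $B(x,(1-v)r_n)\subset B_n$. Provided $r_n$ is small enough that $(1-v)r_n\leq 1/k$ and $(1-v)^{\gamma+\varepsilon/2}\geq r_n^{\varepsilon/2}$ (both hold for all but finitely many such $n$, since $r_n\to 0$), the inclusion \eqref{mlecoro} applied at $x\in F_k$ yields
\[
\mu(B_n)\;\geq\;\mu\bigl(B(x,(1-v)r_n)\bigr)\;\geq\;\bigl((1-v)r_n\bigr)^{\gamma+\varepsilon/2}\;\geq\; r_n^{\gamma+\varepsilon}.
\]
Hence cofinitely many of the indices $n$ with $x\in vB_n$ are selected by $\phi$, and $x\in\limsup_n vB_{\phi(n)}$. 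This proves $F\cap\limsup_n vB_n\subset\limsup_n vB_{\phi(n)}$ up to a $\mu$-null set, and the right-hand side therefore has full $\mu$-measure, as required.

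The main obstacle is the coupling between the extraction rule, stated in terms of $\mu(B_n)$, and the limsup condition to be preserved, stated in terms of $vB_n$. The resolution is precisely the elementary geometric observation $x\in vB_n\Rightarrow B(x,(1-v)r_n)\subset B_n$: this forces any ball $B_n$ meeting $vB_n$ at a point of $F_\mu$ to satisfy the extraction criterion automatically, thereby keeping it in the extracted limsup. This is also why the strict inequality $v<1$ is needed here, unlike in Lemma \ref{gscainf}, where only the $\mu$-a.c. condition was used.
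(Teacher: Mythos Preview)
Your proof is correct and follows essentially the same approach as the paper's: intersect $\limsup_n vB_n$ with the full-measure set $F_\mu^{[\alpha_2,\gamma],\varepsilon/2}$, and use the gap $1-v$ to push the pointwise lower bound on $\mu(B(x,\cdot))$ to a lower bound on $\mu(B_n)$. The only difference is cosmetic: the paper introduces an auxiliary $v<v'<1$ and uses the inclusion $B(x,(v'-v)r_n)\subset v'B_n$, concluding $\mu(\limsup_n v'B_{\phi(n)})=1$, whereas you use the cleaner inclusion $B(x,(1-v)r_n)\subset B_n$ and keep $v'=v$; your variant is marginally simpler and avoids the extra parameter. (Also, the inclusion $F\cap\limsup_n vB_n\subset\limsup_n vB_{\phi(n)}$ that you establish holds exactly, not merely up to a null set, so the qualifier is unnecessary.)
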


\begin{remarque}
The sequence $(B_{\phi(n)})_{n\in\mathbb{N}}$ found in Lemma \ref{gscaup} is in particular $\mu$-a.c by Theorem \ref{equiac}.
\end{remarque}



\begin{proof} 
Let $\alpha =\mbox{infess}_{\mu} (\overline{\dim} (\mu,x))$ and $\gamma=\overline{\dim}_P (\mu).$
Let $\ep>0$ and $v< v^{\prime}<1$.

By   \eqref{mleup} and Theorem \ref{equiac}, $\mu(\limsup_{n\rightarrow+\infty}vB_n \cap F_{\mu}^{[\alpha,\gamma],\frac{3\varepsilon}{2}})=1.$ For all $x\in\limsup_{n\rightarrow+\infty}vB_n \cap F^{[\alpha,\gamma],\frac{3\varepsilon}{2}}_{\mu}$, there exists $r_x>0$ small enough so that
\begin{equation}
\label{randcond}
r_x ^{\frac{\varepsilon}{2}}\leq {(v^{\prime}-v)}^{\gamma +\frac{3\varepsilon}{2}} \text{ and } \forall 0<r\leq r_x, \  \mu(B(x,r))\geq r^{\gamma +\frac{3\varepsilon}{2}}.
\end{equation}

 Since $x\in \limsup_{n\rightarrow+\infty}vB_n$, for all $n\in\mathbb{N}$, there exists $n_x \geq n$ such that $x\in vB_{n_x}$ and $ (v^{\prime}-v)r_{n_x} \leq r_x .$ Note that $B(x,(v^{\prime}-v)r_{n_x})\subset v^{\prime}B_{n_x}.$ This implies the following inequalities:
 $$\mu(B_{n_x})\geq \mu(v^{\prime}B_{n_x})\geq \mu(B(x,(v^{\prime}-v)r_{n_x})\geq ((v^{\prime}-v)r_{n_x})^{\gamma+ \frac{3\varepsilon}{2}}\geq r_{n_x}^{\gamma+2\varepsilon}.$$  
Set $\mathcal{B}_{\gamma,2\varepsilon}=\left\{B_n : \mu(B_n)\geq r_n ^{\gamma+2\varepsilon}\right\}$.
One just showed that 
$$\limsup_{n\rightarrow+\infty}vB_n \cap F^{[\alpha,\gamma],\frac{\varepsilon}{2}}\subset\limsup_{B\in\mathcal{B}_{\gamma,2\varepsilon}}v^{\prime}B.$$ 
This proves that $\mu(\limsup_{B\in\mathcal{B}_{\gamma,2\varepsilon}}v^{\prime}B)=1$.

Since $\varepsilon>0$ was arbitrary, the results also holds with $\frac{\varepsilon}{2}$, which proves Lemma \ref{gscaup}. 
 \end{proof}

We are now ready to prove Proposition \ref{mainextraprop}.

\begin{proof}
Set $\alpha=\underline{\dim}_H (\mu)$ and $\beta=\overline{\dim}_H (\mu).$

Let us fix $(\varepsilon_n)_{n\in\mathbb{N}}\in(\mathbb{R}^{*+})^{\mathbb{N}}$ verifying $\lim_{n\rightarrow+\infty}\varepsilon_n =0.$

The strategy of the proof consists in constructing recursively coverings of the cube $\mathbb{R}^d$ by using Lemma \ref{gscainf} and Lemma \ref{gscaup} and  a  diagonal argument (on the choice of $\varepsilon$) at each step.

More precisely, at step 1, one will build a family of balls $(\mathcal{F}_{1,i})_{i\in\mathbb{N}}$ verifying:
\begin{itemize}
\item[•] for all $i,j\geq 1$, $\forall L\in\mathcal{F}_{1,i}$, $\forall L^{\prime}\in\mathcal{F}_{1,j}$ such that $L\neq L^{\prime},$ one has $L\cap L^{\prime}=\emptyset,$\medskip
\item[•] for all $i\geq 1$, $\mathcal{F}_{1,i}$ is a finite sub-family of $\left\{B_n\right\}_{n\geq 1},$\mk
\item[•] for all $i \geq 1$, for all $L\in\mathcal{F}_{1,i}$, $\vert L\vert ^{\beta+\varepsilon_i}\leq \mu(L)\leq \vert L\vert^{\alpha-\varepsilon_i},$\mk
\item[•] $\mu\left(\bigcup_{i\in\mathbb{N}}\bigcup_{L\in\mathcal{F}_{1,i}}L\right)=1.$
\end{itemize}

Note that for each $i\in\mathbb{N}$, only a finite number of balls $L \in\mathcal{F}_1 :=\bigcup_{j\in\mathbb{N}} \mathcal{F}_{1,j}$ verifies (for that $\varepsilon_i$ naturally associated with those balls) $\vert L\vert ^{\beta+\varepsilon_i}\leq \mu(L)\leq \vert L\vert^{\alpha-\varepsilon_i}$.

At step 2, a family of balls $(\mathcal{F}_{2,i})_{i\in\mathbb{N}}$ will be constructed such that:
\begin{itemize}
\item[•] for all $i,j\geq 1$, $L\in\mathcal{F}_{2,i}$, $L^{\prime}\in\mathcal{F}_{2,j}$, $L\neq L^{\prime}=\emptyset,$\medskip
\item[•] for all $i\geq 1$, $\mathcal{F}_{2,i}$ is a finite sub-family of $\left\{B_n\right\}_{n\geq 2},$\mk
\item[•] for all $i \geq 1$, for all $L\in\mathcal{F}_{2,i}$, $\vert L\vert ^{\beta+\varepsilon_{i+1}}\leq \mu(L)\leq \vert L\vert^{\alpha-\varepsilon_{i+1}},$
\item[•] one has
\begin{equation}
\label{eqfulmes}
\mu\left(\bigcup_{i\in\mathbb{N}}\bigcup_{L\in\mathcal{F}_{2,i}}L\right)=1.
\end{equation}
\end{itemize}

Write $\mathcal{F}_2 =\bigcup_{i \geq 1}\mathcal{F}_{2,i}$. Note that the family of balls $\mathcal{F}_2$ verifies, by construction, that any  $L\in\mathcal{F}_{2}$ the natural $\varepsilon_i$ associated with $L$ is never equal to $\varepsilon_1$, so that only some balls constructed in step 1 are associated with  $\varepsilon_1$ . 

The other steps are achieved following the same scheme.

The construction is detailed below:

 \mk

\textbf{Step 1:}

\mk

Let $\Omega_{1,1}=\mathbb{R}^d .$

\mk

\textbf{Sub-step 1.1:}

\mk

By Lemma \ref{gscainf} and Lemma \ref{gscaup} applied to $\varepsilon=\varepsilon_1$, there exists a $\mu$-a.c sub-sequence  $(B_{\psi_{1,1}(n)})_{n\in\mathbb{N}}$, satisfying, for every $n\in\mathbb{N}$,  
$$\vert B_{\psi_{1,1}(n)}\vert^{\beta+\varepsilon_1}\leq  \mu(B_{\psi_{1,1}(n)})\leq \vert B_{\psi_{1,1}(n)} \vert^{\alpha-\varepsilon_1}.$$ 
 By Lemma \ref{covO} applied to  $\Omega_{1,1}$, the sequence $(B_{\psi_{1,1}(n)})_{n\in\mathbb{N}}$ and $g=1$, there exists an integer $N_{1,1}$ as well as some balls $L_{1,1,1} ,...,L_{1,1,N_{1,1}}\in\left\{B_n\right\}_{n\geq 1}$ verifying:
\begin{itemize}
\item[•] for all $1\leq i<j\leq N_{1,1}$,  $L_{1,1,i} \cap L_{1,1,j}=\emptyset,$\mk
\item[•] for all $1\leq i\leq N_{1,1}$, $\vert L_{1,1,i}\vert^{\beta+\varepsilon_1}\leq \mu(L_{1,1,i})\leq \vert L_{1,1,i}\vert^{\alpha-\varepsilon_1},$\mk
\item[•] $\mu(\bigcup_{1\leq i\leq N_{1,1}}L_{1,1,i})\geq \frac{1}{2}.$\mk
\end{itemize}
Set $\mathcal{F}_{1,1}=\left\{L_{1,1,i}\right\}_{1\leq i\leq N_{1,1}}.$ 

\mk

\textbf{Sub-step 1.2:}

\mk

Let $\Omega_{1,2}=\Omega_{1,1}\setminus \bigcup_{L\in\mathcal{F}_{1,1}}L.$

By Lemma \ref{gscainf} and Lemma \ref{gscaup} with $\varepsilon=\varepsilon_2$, there exists a $\mu$-a.c sub-sequence $(B_{\psi_{1,2}(n)})_{n\in\mathbb{N}}$ satisfying 
$$\vert B_{\psi_{1,2}(n)}\vert^{\beta+\varepsilon_2}\leq \mu(B_{\psi_{1,2}(n)})\leq \vert B_{\psi_{1,2}(n)} \vert^{\alpha-\varepsilon_2}.$$ 

One applies Lemma \ref{covO} to the open set $\Omega_{1,2}$, the sub-sequence of balls $(B_{\psi_{1,2}(n)})_{n\in\mathbb{N}}$ and $g=1$. There exists $N_{1,2}\in\mathbb{N}$ such that $L_{1,2,1},...,L_{1,2,N_{1,2}}$ verifies:
  \begin{itemize}
\item[•] for all $1\leq i<j\leq N_{1,2}$,  $L_{1,2,i} \cap L_{1,2,j}=\emptyset,$\mk
\item[•] for all $1\leq i\leq N_{1,2}$, $\vert L_{1,2,i}\vert^{\beta+\varepsilon_2}\leq \mu(L_{1,2,i})\leq \vert L_{1,2,i}\vert^{\alpha-\varepsilon_2},$\mk
\item[•] $\mu(\bigcup_{1\leq i\leq N_{1,2}}L_{1,2,i})\geq \frac{1}{2}\mu(\Omega_{1,2}).$\mk
\end{itemize}
The family $\mathcal{F}_{1,2}$ is defined as
$\mathcal{F}_{1,2}=\left\{L_{1,2,i}\right\}_{1\leq i\leq N_{1,2}}.$

Proceeding iteratively as  Sub-step $1.1$ and Sub-step $1.2$, for any $i\in\mathbb{N}$, at Sub-step $1.i$ a family of balls $(\mathcal{F}_{1,i})_{i\in\mathbb{N}}$ is constructed so that it verifies:
\begin{itemize}
\item[•] for all $i,j\geq 1$, $L\in\mathcal{F}_{1,i}$, $L^{\prime}\in\mathcal{F}_{1,j}$, if $L\neq L^{\prime},$ then $L\cap L^{\prime}=\emptyset,$\medskip
\item[•] for all $i\geq 1$, $\mathcal{F}_{1,i}$ is a finite subset of $\left\{B_n\right\}_{n\geq 1},$\mk
\item[•] for all $i \geq 1$, for all $L\in\mathcal{F}_{1,i}$, $\vert L\vert ^{\beta+\varepsilon_i}\leq \mu(L)\leq \vert L\vert^{\alpha-\varepsilon_i},$
\item[•] $\mu\left(\bigcup_{i\in\mathbb{N}}\bigcup_{L\in\mathcal{F}_{1,i}}L\right)=1.$
\end{itemize}
Recall that, to justify the last item, this recursive scheme allows to cover  $\mathbb{R}^d$, up to a set of  $\mu$-measure 0 (the argument is similar to the one developed  at the end of the proof of Lemma \ref{covO} to obtain \eqref{usefulmath}).

Set $\mathcal{F}_1 =\bigcup_{i\geq 1}\mathcal{F}_{1,i}.$  With each ball $L\in\mathcal{F}$ is naturally associated a positive real number $\varepsilon(L)$, such that $\varepsilon(L)=\varepsilon_i $ if $L\in\mathcal{F}_{1,i}.$

Let us notice that the construction of the family $\mathcal{F}_2$ does not rely on the existence of the family $\mathcal{F}_1$, so that the families $\mathcal{F}_k$ can actually be built independently, following the same scheme, as described below.

\mk

\textbf{Step $k$:}

\mk

As in step 1, one constructs a family of balls  $(\mathcal{F}_{k,i})_{i\geq 1}$ verifying: 
\begin{itemize}
\item[•] for all $i,j\geq 1$, $L\in\mathcal{F}_{k,i}$, $L^{\prime}\in\mathcal{F}_{k,j}$,  $L\neq L^{\prime},$  $L\cap L^{\prime}=\emptyset,$\medskip
\item[•] for all $i\geq 1$, $\mathcal{F}_{k,i}$ is a finite subset of  $\left\{B_n\right\}_{n\geq k},$\mk
\item[•] for all $i \geq 1$, for all $L\in\mathcal{F}_{k,i}$, $\vert L\vert ^{\beta+\varepsilon_{i+k}}\leq \mu(L)\leq \vert L\vert^{\alpha-\varepsilon_{i+k}},$\mk
\item[•] one has
\begin{equation}
\label{eqfulmesbis}
\mu\left(\bigcup_{i\in\mathbb{N}}\bigcup_{L\in\mathcal{F}_{k,i}}L\right)=1.
\end{equation}
\end{itemize}
Set $\mathcal{F}_{k}=\bigcup_{i\geq 1}\mathcal{F}_{k,i}$ and 
$\mathcal{F}=\bigcup_{k\geq 1}\mathcal{F}_k.$

Denote by  $(B_{ \phi(n)})_{n\in\mathbb{N}}$ the sub-sequence of balls that constitutes the family $\mathcal{F}.$

By construction, for all $i\in\mathbb{N}$, only a finite number of balls  $L\in\mathcal{F}$ verifies $\varepsilon(L)=\varepsilon_i$ (and $\vert L\vert ^{\beta+\varepsilon_i}\leq \mu(L)\leq \vert L\vert^{\alpha -\varepsilon_i}$). In particular, for all $\varepsilon>0$, there exists $N$ large enough so that, for every $n\geq N$, $\varepsilon_n \leq \varepsilon$. Similarly, there exists $N^{\prime}\in\mathbb{N}$   so large that for every $n^{\prime}\geq N^{\prime}$,
$$\vert B_{\phi(n^{\prime})}\vert^{\beta+\varepsilon}\leq \mu(B_{ \phi(n^{\prime})})\leq \vert B_{\phi(n^{\prime})} \vert^{\alpha-\varepsilon}.$$
It follows that 
$$\alpha-\varepsilon\leq \liminf_{n\rightarrow+\infty}\frac{\log \mu(B_{\phi(n)})}{\log\vert B_{\phi(n)}\vert}\leq \limsup_{n\rightarrow+\infty}\frac{\log \mu(B_{\phi(n)})}{\log \vert B_{\phi(n)}\vert}\leq \beta +\varepsilon.$$
Letting $\varepsilon\to 0$ shows that
$$\underline{\dim}_H (\mu)\leq \liminf_{n\rightarrow+\infty}\frac{\log \mu(B_{\phi(n)})}{\log\vert B_{\phi(n)}\vert}\leq \limsup_{n\rightarrow+\infty}\frac{\log \mu(B_{\phi(n)})}{\log \vert B_{\phi(n)}\vert}\leq \overline{\dim}_H (\mu).$$

It only remains to prove that $(B_{\phi(n)})_{n\in \mathbb{N}}$ is $\mu$-a.c.

Let $\Omega $ be an open set and $g\in\mathbb{N}$. We find a finite family of balls $\left\{L\right\}_{i\in\mathcal{I}}\subset \left\{B_{\phi(n)}\right\}_{n\geq g}$ satisfying the conditions of  Definition \ref{ac}. 

 Note that, by \eqref{eqfulmes}, 
$$\mbox{setting } E=\bigcap_{k\geq 1}\bigcup_{L\in\mathcal{F}_k}L, \ \ \mbox{ then } \ \mu\left(E\right)=1.$$  
Let $x\in\Omega \cap E$ and $r_x >0$ small enough so that $B(x,r_x)\subset \Omega .$ Consider $k_x \geq \phi(g)\geq g$ large enough so that, for all $n\geq k_x$, $\vert B_n \vert \leq 2 r_x$. Recall that  $\mathcal{F}_{k_x}\subset \left\{B_n\right\}_{n\geq k_x}.$ Finally,  let us fix $k$ large enough so that $\mu(\widehat{E})\geq \frac{\mu(\Omega)}{2},$  where $\widehat{E}=\left\{x\in E \ : \ k_x \leq k\right\}$. For $x\in\widehat{E}$, let $L_x \in\mathcal{F}_k$ be the ball that contains $x$ (the balls of $\mathcal{F}_k$ being pairwise disjoint, $L_x$ is well defined) and $\left\{L_i\right\}_{i\geq 1}=\left\{L_x : x\in\widehat{E}\right\}$. One has 
\begin{itemize}
\item[•] for all $1\leq i<j$, $L_i \cap L_j =\emptyset,$ 
\item[•] for all $i\in\mathbb{N}$, $L_i \in\left\{B_{\phi(n)}\right\}_{n\geq g}$ and $L_i \subset \Omega,$ 
\item[•] $\mu(\bigcup_{i \geq 1}L_i)\geq \mu(\widehat{E})\geq \frac{\mu(\Omega)}{2}.$
\end{itemize}  
By $\sigma$-additivity, there exists $N\in\mathbb{N}$ such that $\mu(\bigcup_{1\leq i\leq N}L_i)\geq \frac{\mu(\Omega)}{4}$, which proves that $(B_{\phi(n)})_{n\in\mathbb{N}}$ satisfies  Definition \ref{ac} with $C=\frac{1}{4}$ and is indeed $\mu$-a.c. 
\end{proof}

One finishes this section with the following proposition, which supports the idea that, roughly speaking, for an $\alpha$ exact-dimensional measure $\mu$ and a $\mu$-a.c sequence of balls $(B_n)$, considering balls $(B_n)_{n\in\mathbb{N}}$ which does not verify $\mu(B_n)\approx \vert B_n \vert^{\alpha}$ is not relevant from the $\mu$-standpoint.

\begin{proposition}
Let $\mu\in\mathcal{M}( \mathbb{R}^d)$ be an $\alpha$ exact-dimensional measure  and $(B_n)_{n\in\mathbb{N}}$ a sequence of balls  satisfying $\vert B_n \vert\to 0.$ Let $\varepsilon>0.$ Let is also define $\mathcal{B}_{>}^{\varepsilon}=\left\{B_n : \mu(B_n)\leq \vert B_n \vert ^{\alpha+\varepsilon}\right\}$ and  $\mathcal{B}_{<}^{\varepsilon}=\left\{B_n : \mu(B_n)\geq \vert B_n \vert ^{\alpha-\varepsilon}\right\}.$ Then
\begin{enumerate}
\item  for any  $v<1$, $\mu(\limsup_{B\in\mathcal{B}_{>}^{\varepsilon}}vB)=0,$\mk
\item $\mu(\limsup_{B\in\mathcal{B}_{<}^{\varepsilon}}B)=0.$
\end{enumerate}
\end{proposition}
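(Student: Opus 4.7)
Both items rest on the same observation: membership in the relevant $\limsup$ set forces the local dimension of $\mu$ at $x$ to deviate from $\alpha$ by at least $\varepsilon$. Since $\mu$ is $\alpha$ exact-dimensional, $\mu$-almost every $x$ satisfies $\underline{\dim}(\mu,x)=\overline{\dim}(\mu,x)=\alpha$, so any set of $x$ with exceptional local dimension is $\mu$-null.

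For item $(1)$, fix $x\in\limsup_{B\in\mathcal{B}_{>}^{\varepsilon}}vB$ and extract a subsequence $B_{n_k}=B(x_{n_k},r_{n_k})\in\mathcal{B}_{>}^{\varepsilon}$ with $x\in vB_{n_k}$. A direct triangle inequality shows $B(x,(1-v)r_{n_k})\subset B_{n_k}$, so
\begin{equation*}
\mu(B(x,(1-v)r_{n_k}))\leq \mu(B_{n_k})\leq |B_{n_k}|^{\alpha+\varepsilon}=(2r_{n_k})^{\alpha+\varepsilon}.
\end{equation*}
Writing $\rho_k=(1-v)r_{n_k}\to 0$, this reads $\mu(B(x,\rho_k))\leq C_v\rho_k^{\alpha+\varepsilon}$ with $C_v=\bigl(2/(1-v)\bigr)^{\alpha+\varepsilon}$. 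Taking logarithms (and using $\log\rho_k<0$) then letting $k\to\infty$ yields $\overline{\dim}(\mu,x)\geq \alpha+\varepsilon$. Hence
\begin{equation*}
\limsup_{B\in\mathcal{B}_{>}^{\varepsilon}}vB \ \subset \ \{x:\overline{\dim}(\mu,x)\geq \alpha+\varepsilon\},
\end{equation*}
and exact dimensionality makes the right-hand side $\mu$-null.

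For item $(2)$, fix $x\in\limsup_{B\in\mathcal{B}_{<}^{\varepsilon}}B$ and extract $B_{n_k}\in\mathcal{B}_{<}^{\varepsilon}$ with $x\in B_{n_k}$. Since $B_{n_k}\subset B(x,2r_{n_k})$,
\begin{equation*}
\mu(B(x,2r_{n_k}))\geq \mu(B_{n_k})\geq |B_{n_k}|^{\alpha-\varepsilon}=(2r_{n_k})^{\alpha-\varepsilon}.
\end{equation*}
Setting $\rho_k=2r_{n_k}\to 0$, this gives $\mu(B(x,\rho_k))\geq \rho_k^{\alpha-\varepsilon}$, whence $\underline{\dim}(\mu,x)\leq \alpha-\varepsilon$. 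The corresponding set is again $\mu$-null by exact dimensionality.

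There is no substantive obstacle; the only point of care is in item $(1)$, where the hypothesis $v<1$ is used to ensure $B(x,(1-v)r_{n_k})$ is a non-degenerate ball contained in $B_{n_k}$. This is precisely what allows the small-measure condition on $B_{n_k}$ to transfer to small-measure control at a comparable scale \emph{centered at $x$}, which is why item $(2)$ does not need any shrinkage factor.
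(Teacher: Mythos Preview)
Your proof is correct and follows essentially the same approach as the paper: in each case one transfers the measure bound on $B_n$ to a ball centered at $x$ (via $B(x,(1-v)r_n)\subset B_n$ for item~(1) and $B_n\subset B(x,2r_n)$ for item~(2)) and compares with the local dimension of $\mu$ at $x$. The only cosmetic difference is that the paper argues by contradiction (pick a point of the $\limsup$ with local dimension exactly $\alpha$ and derive a contradiction), whereas you show directly that the $\limsup$ is contained in a set of exceptional local dimension, which is $\mu$-null; these are equivalent presentations of the same idea.
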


\begin{proof}
(1) Suppose that there exists $0<v<1$ such that $\mu(\limsup_{B\in\mathcal{B}_{>}^{\varepsilon}}vB)>0.$ Then, since $\mu$ is assumed to be exact-dimensional, there exists $x\in\limsup_{B\in\mathcal{B}_{>}^{\varepsilon}}vB$ such that $\lim_{r\to 0}\frac{\log \mu(B(x,r))}{\log r}=\alpha.$

Consider $r_x>0$ small enough so that, for any $0<r\leq r_x$, $\mu(B(x,r))\geq r^{\dim (\mu)+\frac{ \varepsilon}{2}}$ and $(\frac{1-v}{2})^{\dim (\mu)+\frac{\varepsilon}{2}}\geq r_x ^{\frac{\varepsilon}{4}}.$ Let also $n$ be  large enough so that $x\in B_n$ and $\vert B_n \vert \leq r_x .$ Then $  B(x,\frac{(1-v)}{2} \vert B_n \vert)\subset B_n$, so that
\begin{align}
\label{cone}
\mu(B_n)\geq \mu(B(x,\frac{1-v}{2}\vert B_n \vert))\geq \vert B_n \vert^{\alpha+\frac{\varepsilon}{2}}(\frac{1-v}{2})^{\alpha+\frac{\varepsilon}{2}}\geq \vert B_n \vert^{\alpha+\frac{3\varepsilon}{4}}.
\end{align}
This contradicts the definition of $\mathcal{B}_{>}^{\varepsilon}.$

\medskip
\noindent(2) Assume that $\mu(\limsup_{B\in\mathcal{B}_{<}^{\varepsilon}}B)>0.$ Then, again, there exists $x\in\limsup_{B\in\mathcal{B}_{<}^{\varepsilon}}B$ so  that $\lim_{r\to 0}\frac{\log \mu(B(x,r))}{\log r}=\alpha.$ Consider $r_x >0$ small enough so such that, for any $0<r\leq r_x$, $\mu(B(x,r))\leq r^{\alpha-\frac{ \varepsilon}{2}}.$ Consider $n\in\mathbb{N}$ large enough so that $x\in B_n$ and $\vert B_n \vert \leq r_x.$ One has $B_n \subset B(x,\vert B_n \vert)$, hence 
\begin{align*}
\mu(B_n)\leq \mu(B(x,\vert B_n \vert))\leq \vert B_n \vert^{\alpha-\frac{\varepsilon}{2}}.
\end{align*}
This contradicts the definition of $\mathcal{B}_{<}^{\varepsilon}.$ 
\end{proof}
\begin{remarque}
For doubling measures, it is straightforward that item $(1)$ can be replaced by simply $\mu(\limsup_{B\in\mathcal{B}_{>}^{\varepsilon}}B)=0.$ It can be proved that this is also the case for 1-average d-1 unrectifiable measures (as a consequence of \cite[Theorem 2.11]{KS}). Some  self-similar measures with open set condition satisfies this property (see \cite{KS} again for more details).
\end{remarque}


\

\section{Some explicit examples}
\label{sec-exam}
In this section, applications of Theorem \ref{theoremextra} are given.

\subsection{Rational approximation}
Let us recall the following result from Hurwitz see \cite{HW}, p 219, for more details.

\begin{theoreme}
\label{hure}
Let $x\in[0,1]\setminus \mathbb{Q}.$ There exists an infinite number of pairs $(p,q)\in\mathbb{N}\times \mathbb{N}^*$ with $p\wedge q=1$ and 
\begin{equation}
\label{hur}
\Big\vert x -\frac{p}{q}\Big\vert<\frac{1}{\sqrt{5}q^2}.
\end{equation}
\end{theoreme}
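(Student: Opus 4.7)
The plan is to use the continued fraction expansion, for which this is the classical statement of Hurwitz's theorem. I would first develop the convergents $p_n/q_n$ of the (infinite, since $x \notin \mathbb{Q}$) simple continued fraction of $x$, recalling the standard facts that $\gcd(p_n,q_n)=1$, that $p_{n-1} q_n - p_n q_{n-1} = (-1)^n$, and that consecutive convergents bracket $x$ on opposite sides. These give the fundamental identity
\begin{equation*}
\left|x - \frac{p_n}{q_n}\right| + \left|x - \frac{p_{n+1}}{q_{n+1}}\right| = \left|\frac{p_n}{q_n} - \frac{p_{n+1}}{q_{n+1}}\right| = \frac{1}{q_n q_{n+1}}.
\end{equation*}
Thus producing infinitely many coprime $(p,q)$ satisfying \eqref{hur} reduces to showing that infinitely many convergents satisfy $|x - p_n/q_n| < 1/(\sqrt{5}\, q_n^2)$.

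The heart of the proof is an ``at least one of three'' lemma: among any three consecutive convergents $p_n/q_n, p_{n+1}/q_{n+1}, p_{n+2}/q_{n+2}$, at least one satisfies Hurwitz's inequality. I would prove this by contradiction. Suppose that for some pair of consecutive indices, both convergents violate \eqref{hur}; combining with the identity above gives
\begin{equation*}
\frac{1}{q_n q_{n+1}} \geq \frac{1}{\sqrt{5}\, q_n^2} + \frac{1}{\sqrt{5}\, q_{n+1}^2},
\end{equation*}
which, after multiplying through and setting $t = q_{n+1}/q_n$, yields $t^2 - \sqrt{5}\, t + 1 \leq 0$, i.e.\ $t \in [1/\varphi, \varphi]$ where $\varphi = (1+\sqrt{5})/2$. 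Since $q_{n+1} > q_n$ for $n \geq 1$, the $q_n$'s are strictly increasing integers, so the inequality $t \leq \varphi$ cannot be an equality (it would force $\sqrt{5}$ rational). Assuming the violation occurs also for the pair $(n+1,n+2)$ gives the analogous bounds $q_{n+2}/q_{n+1} \leq \varphi$ and $q_{n+1}/q_n \geq 1/\varphi$.

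Now I would use the three-term recurrence $q_{n+2} = a_{n+2} q_{n+1} + q_n$, with $a_{n+2} \geq 1$ integer, to get
\begin{equation*}
\frac{q_{n+2}}{q_{n+1}} \;=\; a_{n+2} + \frac{q_n}{q_{n+1}} \;\geq\; 1 + \frac{1}{\varphi} \;=\; \varphi,
\end{equation*}
using the algebraic identity $\varphi = 1 + 1/\varphi$. Combined with $q_{n+2}/q_{n+1} \leq \varphi$ from the previous paragraph, we force equality throughout, which in turn forces $q_{n+1}/q_n = 1/\varphi$ exactly — impossible since this ratio is rational while $1/\varphi$ is not. This contradiction establishes the ``one of three'' claim, hence infinitely many convergents satisfy \eqref{hur}, and since the $q_n$ are strictly increasing and each $(p_n,q_n)$ is coprime, this produces infinitely many distinct admissible pairs. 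The main obstacle is simply the careful bookkeeping to pass from ``two consecutive'' to ``three consecutive'' and rule out equality, but no deep input beyond the recurrence and the golden-ratio identity is needed.
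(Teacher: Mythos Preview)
Your argument is the standard continued-fraction proof of Hurwitz's theorem and is correct: the ``one out of three consecutive convergents'' lemma, derived exactly as you outline via the identity $t+1/t\le\sqrt{5}$ and the recurrence $q_{n+2}=a_{n+2}q_{n+1}+q_n$ together with $\varphi=1+1/\varphi$, yields the contradiction with rationality of $q_{n+1}/q_n$.

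As for comparison: the paper does not give its own proof of this statement at all. Theorem~\ref{hure} is simply quoted as a classical result with a reference to Hardy--Wright (\cite{HW}, p.~219), and is used only to deduce that the family $\big(B(p/q,1/q^2)\big)$ is $\mu$-a.c.\ for any diffuse measure~$\mu$. So your proposal supplies precisely the argument the paper chose to omit.
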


An immediate corollary of  Theorem \ref{equiac},  Theorem \ref{theoremextra} and Theorem \ref{hure} is the following:


\begin{corollary}
Let $\mu \in\mathcal{M}([0,1])$ be any diffuse measure. Then the sequence of balls $\Big(B(\frac{p}{q},\frac{1}{q^2})\Big)_{0\leq p\leq q ,  q\in\mathbb{N}^* ,p\wedge q =1}$ is weakly redundant (see \cite{BS2}) and $\mu$-a.c. In particular, if $\mu$ is $\alpha$-exact-dimensional, for $0\leq \alpha\leq d$,  then there exists a sequence $(\varepsilon_n )_{n\in\mathbb{N}}\in(\mathbb{R}^* _+ )^{\mathbb{N}} $ with $ \lim_{n\rightarrow+\infty}\varepsilon_n =0$ and an infinite number of pairs $(p_n ,q_n)\in\Big(\mathbb{N}\times\left\{0,...,q_n\right\}\Big)^{\mathbb{N}} $ such that $p_n \wedge q_n =1$ and $$\Big(\frac{1}{q_n^{2}}\Big)^{\alpha+\varepsilon_n}\leq\mu\Big(B\Big(\frac{p_n}{q_n},\frac{1}{q_n ^2}\Big)\Big)\leq \Big(\frac{1}{q_n^{2}}\Big)^{\alpha-\varepsilon_n}.$$
Moreover, writing $$\mathcal{B}_{\mu}=\left\{ B(\frac{p_n}{q_n},\frac{1}{q_n ^2}):\Big(\frac{1}{q_n^{2}}\Big)^{\alpha+\varepsilon_n}\leq\mu\Big(B\Big(\frac{p_n}{q_n},\frac{1}{q_n ^2}\Big)\Big)\leq \Big(\frac{1}{q_n^{2}}\Big)^{\alpha-\varepsilon_n}\right\}_{n\in\mathbb{N}},$$ one has
$$\dim_H(\limsup_{B\in\mathcal{B}_{\mu}}B)=\alpha.$$
\end{corollary}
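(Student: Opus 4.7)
The plan has three stages, corresponding to the three claims: general $\mu$-a.c.\ plus weak redundancy, the diagonal extraction in the exact-dimensional case, and the dimension computation of $\limsup_{B\in\mathcal{B}_\mu}B$.

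First, I would establish the $\mu$-a.c.\ statement. By Hurwitz's Theorem~\ref{hure}, every irrational $x\in[0,1]$ lies in infinitely many balls $B(p/q,1/(\sqrt 5\,q^2))=\tfrac{1}{\sqrt 5}B(p/q,1/q^2)$ with $p\wedge q=1$. Since $\mu$ is diffuse, $\mu([0,1]\cap\mathbb Q)=0$, hence
\[
\mu\Bigl(\limsup_{p\wedge q=1}\tfrac{1}{\sqrt 5}B(p/q,1/q^2)\Bigr)=1.
\]
Invoking item (2) of Theorem~\ref{equiac} with $v=1/\sqrt 5<1$ shows that $\mathcal B=(B(p/q,1/q^2))_{p\wedge q=1}$ is $\mu$-a.c. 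Weak redundancy of $\mathcal B$ is the content cited from \cite{BS2}.

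Second, assume $\mu$ is $\alpha$-exact dimensional, so that $\underline{\dim}_H(\mu)=\overline{\dim}_P(\mu)=\alpha$. I would then apply Theorem~\ref{theoremextra} in the joint form supplied by the remark following its statement: since $\mathcal B$ already satisfies $\mu(\limsup\tfrac{1}{\sqrt 5} B_n)=1$, we can extract from $\mathcal B$ a subsequence $(B_{\phi(n)})_{n\in\mathbb N}$ that is simultaneously $\mu$-a.c., weakly redundant, and verifies
\[
\alpha\le \liminf_{n\to+\infty}\frac{\log\mu(B_{\phi(n)})}{\log|B_{\phi(n)}|}\le \limsup_{n\to+\infty}\frac{\log\mu(B_{\phi(n)})}{\log|B_{\phi(n)}|}\le\alpha.
\]
Writing $B_{\phi(n)}=B(p_n/q_n,1/q_n^2)$ with $p_n\wedge q_n=1$, this produces a sequence $\varepsilon_n\to 0$ realising the double inequality in the statement; one sets $\mathcal B_\mu=\{B_{\phi(n)}\}_{n\in\mathbb N}$.

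Third, I would prove $\dim_H(\limsup_{B\in\mathcal B_\mu}B)=\alpha$. The lower bound $\ge\alpha$ is immediate: $\mathcal B_\mu$ being $\mu$-a.c., Theorem~\ref{equiac}(1) gives $\mu(\limsup_{B\in\mathcal B_\mu}B)=1$, and the variational characterisation $\underline{\dim}_H(\mu)=\inf\{\dim_H(E):\mu(E)>0\}=\alpha$ recalled after Definition~\ref{dim} forces $\dim_H\ge\alpha$. For the upper bound, fix $s>\alpha$ and $\delta>0$ with $s>\alpha+2\delta$; for $n$ past some threshold, $\mu(B_{\phi(n)})\ge|B_{\phi(n)}|^{\alpha+\delta}$. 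Using the Definition~\ref{wr} layers $\mathcal T_k(\mathcal B_\mu)$, every $B\in\mathcal T_k$ (for $k$ large) satisfies $\mu(B)\ge 2^{-(k+1)(\alpha+\delta)}$. Weak redundancy furnishes a partition $\mathcal T_k=\bigcup_{j=1}^{J_k}\mathcal T_{k,j}$ with $J_k=2^{o(k)}$ and each $\mathcal T_{k,j}$ composed of pairwise disjoint balls, so $\#\mathcal T_{k,j}\le 2^{(k+1)(\alpha+\delta)}$, hence
\[
\sum_{B\in\mathcal T_k}|B|^s\le J_k\cdot 2^{(k+1)(\alpha+\delta)}\cdot 2^{-ks}=O\bigl(J_k\,2^{-k(s-\alpha-\delta)}\bigr),
\]
which is summable over $k$ because $J_k=2^{o(k)}$ and $s-\alpha-\delta>0$. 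The Hausdorff--Cantelli lemma then gives $\mathcal H^s(\limsup_{B\in\mathcal B_\mu}B)=0$, whence $\dim_H\le s$; letting $s\downarrow\alpha$ concludes.

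The main obstacle is really step three: the counting estimate depends on keeping both the measure-radius control and the weak-redundancy property through the extraction. Without the combined version of Theorem~\ref{theoremextra}, one could not simultaneously bound $\#\mathcal T_{k,j}$ from the measure and bound $J_k$ subexponentially, and the Hausdorff--Cantelli bound would fail. The other potential pitfall is that finitely many balls may violate $\mu(B)\ge|B|^{\alpha+\delta}$, but discarding them does not alter the $\limsup$ set, so the argument is unaffected.
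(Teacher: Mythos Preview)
Your proposal is correct and follows the route the paper intends: the paper simply declares the corollary ``immediate'' from Theorem~\ref{equiac}, Theorem~\ref{theoremextra}, and Hurwitz's Theorem~\ref{hure}, and your three stages are exactly how those ingredients assemble. Your upper-bound argument in stage three is the computation \eqref{majowr} from the proof of Theorem~\ref{majoss}, specialised to $U_n=B_n$ and $\mu(B_n)\ge|B_n|^{\alpha+\delta}$; one could also note directly that any subsequence of the original weakly redundant family $\mathcal{B}$ remains weakly redundant, so invoking the combined extraction of the remark is not strictly needed there.
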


\subsection{Application to Random balls}


Let us recall Shepp's Theorem of Shepp \cite{shepp1972randomcovering}.


\begin{theoreme}
\label{she}
Let $(l_n)_{n\in\mathbb{N}}\in(\mathbb{R}_+ ^{*} )^{\mathbb{N}}$ and $(X_n )_{n\in\mathbb{N}}$ be a sequence of i.i.d uniformly distributed random variables on $[0,1]$. Then
$$
 \limsup_{n\rightarrow+\infty}B(X_n, l_n)=[0,1] \Leftrightarrow \sum_{n\geq 0}\frac{1}{n^2}\exp(l_1 +...+l_n)=+\infty. $$    
\end{theoreme}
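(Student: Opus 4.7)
The plan is to analyze the sequence of uncovered sets
$U_n := [0,1] \setminus \bigcup_{k=1}^n B(X_k, l_k)$,
viewing $[0,1]$ as the circle $\mathbb{R}/\mathbb{Z}$ so that boundary effects disappear. The event $\{\limsup_n B(X_n,l_n) = [0,1]\}$ is tail-measurable in $(X_n)$, hence has probability $0$ or $1$ by Kolmogorov's 0--1 law. Using the shift invariance of the divergence/convergence of the series $\sum_k k^{-2}\exp(l_1 + \cdots + l_k)$ (removing an initial block multiplies the exponential factor by a constant and only shifts the summation index, so the dichotomy is preserved), the problem reduces to showing that the circle is almost surely covered by $B_1,\ldots,B_n$ for some $n$ iff the Shepp series diverges. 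This in turn amounts to understanding $\mathbb{P}(U_n = \emptyset)$ as $n \to \infty$.

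The heart of the argument is an estimate for $\mathbb{P}(U_n \neq \emptyset)$, or equivalently for the expected number $\mathbb{E}[N_n]$ of connected components of $U_n$. Conditioning on the sorted positions of the centers $X_1,\ldots,X_n$ and exploiting cyclic symmetry on the circle, one derives the exact Stevens--Shepp inclusion-exclusion formula, an alternating sum over subsets $S \subset \{1,\ldots,n\}$ with summand involving $(1 - 2\sum_{k \in S} l_k)_+^{\,n-1}$. The key step is then a careful regrouping of this sum, organizing terms by which arc lies immediately to the left of each gap endpoint, which leads to the sharp criterion: $\mathbb{P}(U_n \neq \emptyset) \to 0$ precisely when $\sum_k k^{-2}\exp(l_1+\cdots+l_k) = +\infty$.

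Given this criterion, both directions follow easily. If the Shepp series diverges, monotonicity of $(U_n)$ together with $\mathbb{P}(U_n \neq \emptyset) \to 0$ yields $\mathbb{P}(\exists n : U_n = \emptyset) = 1$, and the 0--1 law combined with shift invariance upgrades this to $\limsup B(X_n,l_n) = [0,1]$ almost surely. If the Shepp series converges, the same analysis produces a uniform lower bound $\mathbb{P}(U_n \neq \emptyset) \geq c > 0$; combined with a second-moment estimate on $N_n$ (obtained from the same inclusion-exclusion applied to pairs of gaps) and the Paley--Zygmund inequality, this forces $\mathbb{P}(\forall n : U_n \neq \emptyset) > 0$, hence almost sure non-covering by the 0--1 law.

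The main obstacle is precisely the asymptotic analysis of the Stevens--Shepp formula: the alternating sum is highly oscillatory, and naive termwise bounds lose all the sharpness (they only recover the weak threshold $\sum l_k = \infty$ coming from pointwise Borel--Cantelli plus Fubini, which guarantees $\mathcal{L}^1(\limsup B_n)=1$ but not full covering). Extracting the additional factor $1/k^2$ requires Shepp's regrouping trick, which genuinely uses the rigid geometry of the circle. Most of the technical work would go into reproducing this identity, or finding a cleaner probabilistic substitute such as a marked-point-process or size-biasing computation of the first two moments of $N_n$.
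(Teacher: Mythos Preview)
The paper does not prove this theorem at all: it is simply \emph{recalled} as a known result and attributed to Shepp \cite{shepp1972randomcovering}, with no argument given. It is used only as a black box to derive the subsequent corollary via Theorem~\ref{equiac} and Theorem~\ref{theoremextra}. So there is nothing in the paper to compare your proposal against.

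As for the proposal itself, what you have written is a reasonable high-level outline of Shepp's original argument (zero--one law, reduction to the circle, the Stevens--Shepp inclusion--exclusion identity for the probability that gaps remain, and the delicate asymptotic analysis of that alternating sum), but it is a plan rather than a proof. You explicitly acknowledge that ``most of the technical work would go into reproducing this identity''; that is exactly the substance of Shepp's paper, and you have not carried it out. In the context of the present article this is fine, since the theorem is quoted, not proved---but if you intend to supply a self-contained proof, the real content (the regrouping that produces the $k^{-2}$ factor) is still missing.
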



From Theorem \ref{equiac}, Theorem \ref{theoremextra} and Theorem \ref{she}, the following corollary is deduced.


\begin{corollary}
For any $\alpha$ exact-dimensional measure $\mu \in\mathcal{M}([0,1])$, $0\leq\alpha\leq d$, for almost any i.i.d sequence of random variables uniformly distributed on $[0,1]$, $(X_n)_{n\in\mathbb{N}}$, there exists a sequence of positive real numbers $(\varepsilon_k )_{k\in\mathbb{N}}$ with $\varepsilon_k \to 0$ and a subsequence $(n_k)_{k\in\mathbb{N}}\to+\infty$ satisfying
$$\Big(\frac{2}{n_k} \Big)^{\alpha+\varepsilon_k}\leq \mu\Big(B\Big(X_{n_k} ,\frac{2}{n_k} \Big)\Big)\leq \Big(\frac{2}{n_k} \Big)^{\alpha-\varepsilon_k}.$$ 
Writing again $$\mathcal{B}_{\mu}=\left\{B\Big(X_{n_k} , \frac{2}{n_k}\Big):\Big(\frac{2}{n_k}\Big)^{\alpha+\varepsilon_k}\leq \mu\Big(B\Big(X_{n_k} ,\frac{2}{n_k} \Big)\Big)\leq \Big(\frac{2}{n_k} \Big)^{\alpha-\varepsilon_k}\right\}_{k\in\mathbb{N}},$$
one has $$\dim_H (\limsup_{B\in\mathcal{B}_{\mu}}B)=\alpha.$$ 
\end{corollary}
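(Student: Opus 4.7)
The strategy is to reduce the corollary to Theorem~\ref{theoremextra} via Shepp's theorem, then read off the Hausdorff dimension from the weakly redundant structure that the extraction provides.

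First I would fix any $v \in [1/2,1)$ and apply Theorem~\ref{she} to the contracted radii $l_n = 2v/n$. Since $\sum_{k=1}^n l_k = 2v\log n + O(1)$, we have $n^{-2}\exp(l_1+\cdots+l_n) \sim n^{2v-2}$, whose series diverges (as $2v-2 \geq -1$). Hence almost surely $\limsup_{n\to +\infty} vB(X_n,2/n) = [0,1]$. Setting $B_n = B(X_n, 2/n)$, this yields $\mu(\limsup_{n\to+\infty} v B_n) = 1$ almost surely, which is the hypothesis of Theorem~\ref{theoremextra}(2).

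Next I would invoke Theorem~\ref{theoremextra} together with the remark that follows it (obtained by first extracting via item (2), then applying item (1) to the extracted sequence). This yields a sub-sequence $(B_{\phi(k)})_{k\geq 1} = (B(X_{n_k}, 2/n_k))_{k\geq 1}$ which is simultaneously $\mu$-a.c, weakly redundant, and satisfies
$$
\underline{\dim}_H(\mu) \leq \liminf_{k\to\infty} \frac{\log \mu(B_{\phi(k)})}{\log |B_{\phi(k)}|} \leq \limsup_{k\to\infty} \frac{\log \mu(B_{\phi(k)})}{\log |B_{\phi(k)}|} \leq \overline{\dim}_P(\mu).
$$
Because $\mu$ is $\alpha$-exact dimensional, both outer terms coincide with $\alpha$, so the ratio converges to $\alpha$. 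Defining $\varepsilon_k$ as the maximum deviation of this ratio from $\alpha$ at stage $k$ produces the announced double inequality on $\mu(B(X_{n_k},2/n_k))$.

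It remains to prove $\dim_H(\limsup_{B\in\mathcal{B}_\mu} B) = \alpha$. The lower bound is immediate: since $(B_{\phi(k)})$ is $\mu$-a.c, Theorem~\ref{equiac} gives $\mu(\limsup_k B_{\phi(k)}) = 1$, and the characterisation $\underline{\dim}_H(\mu) = \inf\{\dim_H(E): \mu(E)>0\}$ together with $\underline{\dim}_H(\mu)=\alpha$ forces $\dim_H \geq \alpha$. For the upper bound, fix $s > \alpha$. By weak redundancy, the family $\mathcal{T}_j$ of balls $B_{\phi(k)}$ with radius in $(2^{-j-1},2^{-j}]$ splits into $J_j = 2^{o(j)}$ subfamilies of pairwise disjoint balls. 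Setting $\tilde{\varepsilon}_j = \sup\{\varepsilon_k : B_{\phi(k)}\in \mathcal{T}_j\}$, we have $\tilde\varepsilon_j \to 0$ (because $|B_{\phi(k)}|\to 0$ forces $k\to\infty$, hence $\varepsilon_k \to 0$). Each disjoint subfamily consists of balls with $\mu$-measure at least $2^{-(j+1)(\alpha+\tilde\varepsilon_j)}$, so contains at most $2^{(j+1)(\alpha+\tilde\varepsilon_j)}$ balls. Consequently
$$
\sum_{B\in\mathcal{T}_j}|B|^s \leq J_j \cdot 2^{(j+1)(\alpha+\tilde\varepsilon_j)}\cdot 2^{-js} = 2^{-j(s-\alpha-\tilde\varepsilon_j-o(1))},
$$
which is summable over $j$ provided $\tilde\varepsilon_j < s-\alpha$ eventually. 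Summing over $j\geq j_0$ as $j_0 \to \infty$ gives $\mathcal{H}^s(\limsup_k B_{\phi(k)}) < \infty$, and letting $s\downarrow \alpha$ yields the upper bound.

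The main obstacle is the upper bound on the Hausdorff dimension: the $\mu$-a.c property alone does not constrain $\dim_H$ from above. It is the combination of weak redundancy (limiting the multiplicity of balls at each dyadic scale) with the pointwise lower bound $\mu(B_{\phi(k)}) \geq |B_{\phi(k)}|^{\alpha+\varepsilon_k}$ that converts a measure-theoretic covering estimate into a geometric one, which is precisely why both conclusions of Theorem~\ref{theoremextra} must be used at once.
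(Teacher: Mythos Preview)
Your proof is correct and follows exactly the route the paper indicates (Shepp $\Rightarrow$ Theorem~\ref{theoremextra} via its accompanying remark $\Rightarrow$ dimension): the paper states the corollary as an immediate consequence of Theorems~\ref{equiac}, \ref{theoremextra} and~\ref{she} without further detail, and your expansion is the intended one. Your upper-bound computation---bounding the cardinality of each disjoint subfamily of $\mathcal{T}_j$ via the lower measure estimate and summing against $J_j 2^{-js}$---is precisely the weak-redundancy estimate \eqref{majowr} from the proof of Theorem~\ref{majoss}, specialized to $U_n=B_n$ (up to a harmless constant $2^s$ in the bound on $|B|^s$, since $|B|\le 2^{-j+1}$ rather than $2^{-j}$).
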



\subsection{Examples in dynamical systems}


Let us introduce some notation.

Let $m\geq 2$  and $S=\left\{f_1 ,...,f_m\right\}$ be a system of $m$ similarities of $\mathbb{R}^d \to \mathbb{R}^d$ of ratio of contraction $0<c_1 <1 ,...,0<c_m <1.$

Let us also write $\Lambda=\left\{1,...,n\right\}$, $\Lambda^{*}=\bigcup_{k\geq 0}\Lambda^k$ and for $\underline{i}=(i_1,...,i_k)\in\Lambda^k$,
\begin{itemize}
\item[•]$f_{\underline{i}}=f_{i_1}\circ ... \circ f_{i_k},$\mk
\item[•] $X_{\underline{i}}=f_{\underline{i}}([0,1]^d),$\mk
\item[•]$c_{\underline{i}}=c_{i_1}\times ... \times c_{i_k}.$
\end{itemize}

Let us fix also $(p_1,...,p_m)\in\mathbb{R}_+ ^{\mathbb{N}}$  a probability vector, i.e a vector  verifying $\sum_{1\leq i\leq m}p_i =1$.




\begin{remarque}
\label{covks}
Let $\mu$ defined by \eqref{equass} and $x\in K_S$. Then, for any $k\in\mathbb{N}$, the balls $\left\{B(f_{\underline{i}}(x),2\vert K_S \vert c_{\underline{i}})\right\}_{\underline{i}\in\Lambda^k}$ covers $K_S$. In particular, by Theorem \ref{equiac} $\Big(B(f_{\underline{i}}(x),3\vert K_S \vert c_{\underline{i}})\Big)_{\underline{i}\in\Lambda^*}$ is $\mu$-a.c.
\end{remarque}



As a consequence of Theorem \ref{theoremextra} and Lemma \ref{covks}, one gets:


\begin{corollary}
Let $\mu\in\mathcal{M}(\mathbb{R}^d)$ be a measure defined by \eqref{equass} and $x\in K_S$. There exists a $\mu$-a.c  weakly redundant sub-sequence of balls $(B_{n})_{n\in\mathbb{N}}$ extracted from $\Big(B(f_{\underline{i}}(x),3c_{\underline{i}})\Big)_{\underline{i}\in\Lambda^*}$ such that, for all $n\in\mathbb{N}$ and for some sequence $(\varepsilon_k)_{k\in\mathbb{N}}\in (\mathbb{R}^{*}_+ )^{\mathbb{N}}$ verifying $\varepsilon_k \to 0$, 
$$\vert B_n \vert^{\dim(\mu)+\varepsilon_n}\leq \mu(B_n)\leq \vert B_n \vert^{\dim(\mu)-\varepsilon_n},$$
and $\mathcal{B}_{\mu}=\left\{B_n \right\}_{n\in\mathbb{N}}$ satisfies
$$ \dim_H (\limsup_{n\rightarrow+\infty}B_n)=\dim (\mu).$$

\end{corollary}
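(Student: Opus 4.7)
The strategy is to combine Remark \ref{covks}, Theorem \ref{theoremextra}, and Corollary \ref{sharp}, exploiting the fact (Feng--Hu) that the self-similar measure $\mu$ is exact dimensional: writing $\alpha=\dim(\mu)$, one has $\underline{\dim}_H(\mu)=\overline{\dim}_P(\mu)=\alpha$.

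First, by Remark \ref{covks}, the family $\mathcal{B}=\bigl(B(f_{\underline{i}}(x),3|K_S|c_{\underline{i}})\bigr)_{\underline{i}\in\Lambda^*}$, enumerated into a sequence $(B'_n)_{n\in\mathbb{N}}$ with $|B'_n|\to 0$, is $\mu$-a.c. Since each center $f_{\underline{i}}(x)$ lies in $K=K_S$ (because $x\in K$ and $K$ is $S$-invariant), the centers of all the $B'_n$ belong to $K$.

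Next, apply Theorem \ref{theoremextra}: its item (1) produces a weakly redundant $\mu$-a.c subsequence, and its item (2) produces a $\mu$-a.c subsequence satisfying \eqref{condimesu}. As pointed out in the remarque following Theorem \ref{theoremextra}, a single $\mu$-a.c subsequence $(B_n)_{n\in\mathbb{N}}$ can be extracted which satisfies both properties simultaneously: indeed, one first applies item (2) to $(B'_n)$ to obtain a $\mu$-a.c subsequence with the prescribed measure/diameter behavior, and then applies item (1) to this new sequence, noting that item (1) only throws balls away, so the measure/diameter inequality is preserved. Exact dimensionality collapses \eqref{condimesu} to
\begin{equation*}
\lim_{n\to+\infty}\frac{\log\mu(B_n)}{\log|B_n|}=\alpha=\dim(\mu),
\end{equation*}
from which one immediately extracts a sequence $\varepsilon_n\to 0$ with
\begin{equation*}
|B_n|^{\dim(\mu)+\varepsilon_n}\le \mu(B_n)\le |B_n|^{\dim(\mu)-\varepsilon_n}.
\end{equation*}

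It remains to compute $\dim_H(\limsup_n B_n)$. The $\mu$-a.c property together with Theorem \ref{equiac} gives $\mu(\limsup_n B_n)=1$, hence $\dim_H(\limsup_n B_n)\ge \underline{\dim}_H(\mu)=\dim(\mu)$. For the reverse inequality, apply Corollary \ref{sharp} to $(B_n)_{n\in\mathbb{N}}$: the sequence is weakly redundant, centered in $K$, $\mu$-a.c, and satisfies $\limsup\log\mu(B_n)/\log|B_n|=\dim(\mu)$. Taking $\delta=1$ (so that $B_n^\delta=B_n$) yields $\dim_H(\limsup_n B_n)=\dim(\mu)/1=\dim(\mu)$, as required.

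The only non-trivial point is the simultaneous realization of items (1) and (2) of Theorem \ref{theoremextra}; everything else is a direct plug-in of results already established earlier in the paper. One minor subtlety worth double-checking is that the upper bound on $\dim_H(\limsup B_n)$ via Corollary \ref{sharp} only needs $\limsup\log\mu(B_n)/\log|B_n|\ge\dim(\mu)$ (which follows from the stronger limit obtained above), and that $\limsup_n B_n\subset K$ since the balls are centered in the compact set $K$ with diameters going to $0$, so the "intersection with $K$" issue in Theorem \ref{majoss}/Corollary \ref{sharp} is automatic.
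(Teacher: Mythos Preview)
Your proof is correct and follows essentially the same route the paper intends: the paper merely states that the corollary is a consequence of Theorem~\ref{theoremextra} and Remark~\ref{covks}, and you have spelled out the details, including the appeal to Corollary~\ref{sharp} with $\delta=1$ for the dimension equality. One small point worth making explicit: to invoke item~(2) of Theorem~\ref{theoremextra} (and the remarque following it) you need $\mu(\limsup_n vB'_n)=1$ for some $v<1$, not merely the $\mu$-a.c.\ property; this is immediate from Remark~\ref{covks} itself, since already the balls $B(f_{\underline{i}}(x),2|K_S|c_{\underline{i}})=\tfrac{2}{3}B'_{\underline{i}}$ cover $K_S$, so $v=2/3$ works.
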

 \

 \section{Proofs of Theorem \ref{majoss}}
\label{sec-upper}

The proof strongly relies on the following result proved in \cite{ED3}.
\begin{theoreme}
\label{contss}
Let $S$ be a self-similar IFS of $\R^d$. Let $K$ be the attractor of $S$. Let~$\mu$ be a self-similar measure  associated with $S$. For any $0\leq s<\dim(\mu)$, there exists a constant $c=c(d,\mu,s)>0 $ depending on the dimension $d$, $\mu$ and  $s$  only, such that for any ball $B=B(x,r)$ centered on $K$ and $r\leq 1$, any open set $\Omega$, one has 
\begin{align}
\label{genhcontss}
&c(d,\mu,s)\vert B\vert ^{s}\leq\mathcal{H}^{\mu, s }_{\infty}(\widering{B})\leq  \mathcal{H}^{ \mu, s}_{\infty}(B)\leq\vert B\vert ^{s}\text{ and } \nonumber\\
&c(d,\mu,s)\mathcal{H}^{s}_{\infty}(\Omega \cap K)\leq \mathcal{H}^{\mu,s}_{\infty}(\Omega)\leq \mathcal{H}^{s}_{\infty}(\Omega \cap K).
\end{align}
For any $s>\dim(\mu)$, $ \mathcal{H}^{\mu,s}_{\infty}(\Omega)=0.$

\end{theoreme}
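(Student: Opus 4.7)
The easy part of the statement collects the upper bounds and the vanishing at $s>\dim(\mu)$. The bound $\mathcal{H}^{\mu,s}_\infty(B) \leq |B|^s$ is immediate by taking $E=B$ in \eqref{eqmucont} and covering $E$ by itself. The monotonicity $\mathcal{H}^{\mu,s}_\infty(\widering{B}) \leq \mathcal{H}^{\mu,s}_\infty(B)$ follows from the following observation: for any competitor $E \subset B$ with $\mu(E)=\mu(B)$, the set $E\cap \widering B$ still satisfies $\mu(E\cap \widering B)=\mu(\widering B)$ (both equal $\mu(B)-\mu(\partial B)$), and $\mathcal{H}^s_\infty(E\cap \widering B)\leq \mathcal{H}^s_\infty(E)$; so $E \cap \widering B$ is a competitor for $\widering B$ of no larger content. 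The inequality $\mathcal{H}^{\mu,s}_\infty(\Omega)\leq \mathcal{H}^s_\infty(\Omega\cap K)$ is obtained by taking $E=\Omega\cap K$, which is admissible because $\supp\mu=K$. For $s>\dim(\mu)$, Feng--Hu's exact dimensionality of $\mu$ furnishes a Borel set $A$ of full $\mu$-measure with $\dim_H A=\dim(\mu)<s$, and then $\mathcal{H}^s_\infty(A\cap \Omega)\leq \mathcal{H}^s(A\cap \Omega)=0$, so $\mathcal{H}^{\mu,s}_\infty(\Omega)=0$.

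For the nontrivial lower bounds, the plan is to exploit self-similarity to manufacture a Frostman-regular subset of $K$ that is, quantitatively, dense in every ball centered on $K$. Fix $s<s'<\dim(\mu)$ and set
$$A_M=\{x\in K\,:\, \mu(B(x,r))\leq M r^{s'} \text{ for all }0<r\leq 1\},\qquad M\geq 1.$$
Exact dimensionality gives $\mu\bigl(\bigcup_M A_M\bigr)=1$. The crucial \emph{self-similar density} step is to push this full-measure regularity through the IFS: writing $\mu=\sum_{\underline i\in\Lambda^n}p_{\underline i}\,\mu\circ f_{\underline i}^{-1}$ and using that each $f_{\underline i}^{-1}(A_M)$ is again a regular set for $\mu$ (up to rescaling the constant $M$), one obtains $M_0$ and $c_0>0$ such that
$$\mu(A_{M_0}\cap B)\geq c_0\,\mu(B)$$
for every ball $B$ centered on $K$ of diameter at most some $r_0$. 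Combined with a cylinder-level lower estimate $\mu(B)\geq c_1|B|^{\dim(\mu)}$ (valid for balls comparable in diameter to cylinders $f_{\underline i}(K)\subset B$, obtained again by self-similarity), one gets $\mu(A_{M_0}\cap B)\geq c_2 |B|^{\dim(\mu)}$.

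The mass distribution principle on $A_{M_0}$ concludes. Given a competitor $E\subset \Omega$ with $\mu(E)=\mu(\Omega)$ and a countable ball cover $(C_n)$ of $E$, one may discard the $C_n$ not meeting $A_{M_0}$ (their total $\mu$-mass in $E$ is $0$), and for each remaining $C_n$ choose $x_n\in C_n\cap A_{M_0}$, giving $\mu(C_n)\leq \mu(B(x_n,2|C_n|))\leq M_0(2|C_n|)^{s'}$. Summation yields $\sum_n|C_n|^{s'}\geq c_3\,\mu(\Omega)$, and since we may assume $|C_n|\leq 1$ (longer balls can be replaced by one large covering ball of diameter $|\Omega|$ at a fixed cost), the exponent gap is absorbed into the constant: $\sum_n|C_n|^s\geq c_4\,\mu(\Omega)$. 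Applying this with $\Omega=\widering B$ gives $\mathcal{H}^{\mu,s}_\infty(\widering B)\geq c_4\,\mu(B)\geq c_5|B|^{\dim(\mu)}\geq c_6|B|^s$ (since $|B|\leq 1$). For the second lower bound, one covers $\Omega\cap K$ by balls realizing (up to an $\varepsilon$) the infimum defining $\mathcal{H}^s_\infty(\Omega\cap K)$ and applies the cylinder-level conversion between $\mu$-mass and diameter to deduce $\mathcal{H}^s_\infty(\Omega\cap K)\leq c_7\,\mu(\Omega)/c_4\leq c_8\,\mathcal{H}^{\mu,s}_\infty(\Omega)$.

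The main obstacle is the absence of uniform Ahlfors regularity for self-similar measures that do not satisfy the open set condition: the bound $\mu(B(x,r))\geq c\,r^{\dim(\mu)}$ is not available pointwise. Replacing pointwise regularity by the self-similar ``averaged'' regularity of $A_{M_0}$ described above is the key technical device, and the quantitative dependence of the final constant $c(d,\mu,s)$ on the IFS data—in particular when the contraction ratios $c_i$ differ widely—requires careful cylinder combinatorics to control the overlap and the scale-matching between balls and cylinders.
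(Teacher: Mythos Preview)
The paper does not prove Theorem~\ref{contss}: it is quoted from \cite{ED3} and used as a black box in Section~\ref{sec-upper}. There is therefore no proof in the present paper to compare your proposal against.

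Regarding the proposal itself: the upper bounds and the case $s>\dim(\mu)$ are fine. The lower-bound argument, however, has an internal inconsistency. In the middle you invoke a ``cylinder-level lower estimate $\mu(B)\geq c_1|B|^{\dim(\mu)}$'' for balls containing a cylinder $f_{\underline i}(K)$ of comparable diameter, and you use it to pass from $\mathcal{H}^{\mu,s}_\infty(\widering B)\geq c_4\,\mu(B)$ to $\geq c_5|B|^{\dim(\mu)}$. But $\mu(f_{\underline i}(K))=p_{\underline i}$, and for a general self-similar measure the products $p_{\underline i}$ are \emph{not} uniformly comparable to $c_{\underline i}^{\dim(\mu)}$ (already when all $c_i$ are equal and the $p_i$ are not). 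You explicitly acknowledge in your last paragraph that the pointwise bound $\mu(B(x,r))\geq c\,r^{\dim(\mu)}$ fails without the open set condition, yet the preceding paragraph uses precisely this. The same issue recurs in your derivation of the second lower bound, which you reduce to $\mathcal{H}^s_\infty(\Omega\cap K)\leq C\mu(\Omega)$; producing such a cover again needs lower Ahlfors regularity that you do not have. The Frostman set $A_{M_0}$ correctly yields $\mathcal{H}^{\mu,s}_\infty(\Omega)\gtrsim \mu(\Omega)$, but converting $\mu(\Omega)$ into $|B|^s$ or into $\mathcal{H}^s_\infty(\Omega\cap K)$ requires a further idea that your sketch does not supply.
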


Recall that the sequence $\mathcal{B}=(B_n)_{n\in\mathbb{N}}$ is assumed to be weakly redundant. In such a case, for any $\varepsilon>0$, following the notation involved in Definition \ref{wr}, it holds that
\begin{align*}
\sum_{n\geq 0}\vert B_n \vert^{\varepsilon}\mu(B_n)=\sum_{k\geq 0}\sum_{B\in\mathcal{T}_k (\mathcal{B})}\vert B \vert^{\varepsilon}\mu(B)\leq \sum_{k\geq 0}\sum_{1\leq j\leq J_k}2^{-k\varepsilon}\sum_{B\in T_{k,j} (\mathcal{B})} \mu(B).
\end{align*}
Since for every $(k,j)$ the family $T_{k,j}$ is composed of pairwise disjoint balls, this yields
\begin{equation}
\label{majowr}
\sum_{n\geq 0}\vert B_n \vert^{\varepsilon}\mu(B_n)\leq  \sum_{k\geq 0}\sum_{1\leq j\leq J_k}2^{-k\varepsilon}=\sum_{k\geq 0}J_k 2^{-k\varepsilon}<+\infty.
\end{equation}

Now, for $n \in\mathbb{N}$, consider a sequence of balls $(A_k ^{n})_{k\in\mathbb{N}}$, with $\vert A_k ^n \vert \leq \vert B_n \vert$ and such that $U_n \cap K\subset \bigcup_{k\geq 0}A_k ^n$. Recall Theorem \ref{contss} and its notations. One  has.
\begin{equation}
\label{equapmaj1}
\mathcal{H}^s_{\infty}(U_n)\leq \sum_{k\geq 0}\vert A_{k}^n  \vert^s \leq 2\mathcal{H}^s_{\infty}(U_n)\leq \frac{2}{c(d,\mu,s)}\mathcal{H}^{\mu,s}_{\infty}(U_n)\leq  \frac{2}{c(d,\mu,s)}\mu(B_n).
\end{equation}

Since for each $n\in\mathbb{N}$, $U_n \cap K \subset \bigcup_{k\geq 0}A_k ^n$, it holds that $\limsup_{n\rightarrow+\infty}U_n \cap K\subset \limsup_{k,n \rightarrow+\infty}A_k ^n .$ For any $\varepsilon>0$, one gets
\begin{align*}
\sum_{n\geq 0}\sum_{k\geq 0}\vert A_k ^n\vert^{s+\varepsilon}\leq \sum_{n\geq 0}\vert B_n \vert^{\varepsilon}\frac{2}{c(d,\mu,s)}\mu(B_n).
\end{align*}
In particular, by \eqref{majowr}, 
\begin{equation}
\sum_{n\geq 0}\sum_{k\geq 0}\vert A_k ^n\vert^{s+\varepsilon}<+\infty.
\end{equation}

One concludes that

$$\mathcal{H}^{s+\varepsilon}(\limsup_{n\rightarrow+\infty}U_n \cap K)\leq  \mathcal{H}^{s+\varepsilon}(\limsup_{k,n\rightarrow+\infty}A_k^n)<+\infty .$$
This implies that $\dim_H (\limsup_{n\rightarrow+\infty}U_n \cap K)\leq s+\varepsilon$ and $\varepsilon$ being arbitrary,  $$\dim_H (\limsup_{n\rightarrow+\infty}U_n \cap K)\leq s .$$

%
%
%

\begin{remarque}
\label{remajo1}
\item[•] An important fact to underline here is that the convergences  established in \eqref{majowr} and  \eqref{equapmaj1} do not rely on the fact that the measure is self-similar, but hold for any measure $\mu$. One could state a comparable upper-bound Theorem for any measure $\mu$ by replacing $K$ by a $G_{\delta}$ set of full measure in \eqref{uppernound18}. 

These computations also have the following straightforward consequence for a measure $\mu\in\mathcal{M}(\mathbb{R}^d)$ without the self-similarity assumption: Assume that, for $n$ large enough, $\mathcal{H}^{\mu,s}_{\infty}(U_n)\leq \mu(B_n).$ If the sequence $(U_n)_{n\in\mathbb{N}}$  verifies that for any ball $B_i \subset U_n$ one also has $B_i \subset \bigcup_{k\geq n}A_k^n$ (where the balls $(A_{k,n})_{k\in\mathbb{N}}$ are chosen as in the proof of Theorem \ref{majoss}), then $\dim_H (\limsup_{n\rightarrow+\infty}U_n)\leq s$. In particular if this holds for any $s>s(\mu,\mathcal{B},\mathcal{U}),$ then $\dim_H (\limsup_{n\rightarrow+\infty}U_n)=s(\mu,\mathcal{B},\mathcal{U}).$\medskip

\item[•] When the self-similar measure verifies $\supp (\mu)=[0,1]^d$, the existence of $s_0$ as in Corollary \ref{equass1} is ensured as soon as the shapes of the sets $U_n$ are ``uniform'' in $n$. For instance, consider the case where $\mu=\mathcal{L}^d$ and  $(U_n =R_n )_{n\in\mathbb{N}}$, where $R_n$ is an open rectangle associated with some vector $\boldsymbol{\tau}=(\tau_1 ,...,\tau_d)$ defined as in Theorem \ref{rectssmajo}. Recall that by Theorem \ref{contss}, the Lebesgue essential Hausdorff content and the classical Hausdorff content are equivalent. It is easily verified that, for any $n\in\mathbb{N}$, $\mathcal{H}^{s}_{\infty}(R_n)=\vert B_n \vert^{g_{\boldsymbol{\tau}}(s)}$, for the  mapping $g_{\boldsymbol{\tau}}:\mathbb{R}^+ \to \mathbb{R}^+$, defined as (see \cite{KR})
$$ g_{\boldsymbol{\tau}}(s)= \max_{1\leq k\leq d}\left\{s\tau_k -\sum_{1\leq i\leq k}\tau_k-\tau_i\right\}.$$ 
Note that $g_{\boldsymbol{\tau}}(s) $  does not depend on $n$. Corollary can therefore be applied with $s_0=\min_{s: g_{\boldsymbol{\tau}}(s)\geq d}\left\{s\right\}$. \mk
\item[•] Unfortunately, when such an $s_0$ does not exist, the Hausdorff dimension of $\limsup_{n\rightarrow+\infty}U_n$ has to depend on the structure of the sequence $(U_n)$ itself. 

Consider  $0<s_1 < s_2 \leq d$ and two vectors $\boldsymbol{\tau_1}$ and $\boldsymbol{\tau_2}$ such that $s_1=\min_{s: g_{\boldsymbol{\tau_1}}(s)\geq d}\left\{s\right\}$ and $s_2=\min_{s: g_{\boldsymbol{\tau_2}}(s)\geq d}\left\{s\right\}.$ Consider a weakly redundant sequence of balls $(B_n)_{n\in\mathbb
N}$ of $[0,1]^d$ and a sequence of open sets $(U_n)_{n\in\mathbb{N}}$, $U_n \subset B_n$ satisfying:\mk
\begin{itemize}
\item[•] $\vert B_n \vert \to 0,$\mk
\item[•] $\mathcal{L}^d (\limsup_{n\rightarrow+\infty}B_n)=1,$\mk
\item[•] for any $n\in\mathbb{N}$, $B_n \subset [0,\frac{1}{2})\times \prod_{i=2}^d [0,1]$ or $B_n \subset (\frac{1}{2},1]\times \prod_{i=2}^d [0,1],$\mk
\item[•] for any $n\in\mathbb{N}$ such that  $B_n \subset [0,\frac{1}{2})\times \prod_{i=2}^d [0,1]$, $U_n =(R_n )$ with $R_n$ an open rectangle associated with $\boldsymbol{\tau_1}$ as in Theorem \ref{rectssmajo},\mk
\item[•] for any $n\in\mathbb{N}$ such that $B_n \subset (\frac{1}{2},1]\times \prod_{i=2}^d [0,1],$  $U_n =R_n $ with $R_n$ an open rectangle associated with $\boldsymbol{\tau_2}.$\mk
\end{itemize} 
Then smallest real number such that the condition of Theorem \ref{majoss} holds is $s_2$, the largest real number such that the condition of Theorem \ref{zzani} holds is $s_1$ and $\dim_H (\limsup_{n\rightarrow+\infty}U_n)=s_2.$

On the other hand, following the scheme of example 3.5 in \cite{KR}, it is also possible to construct two weakly redundant sequences of balls $(B_{n,1})_{n\in \mathbb{N}}$ and $(B_{n,2})_{n\in\mathbb{N}}$ such that:
\begin{itemize}
\item[•] $\vert B_{n,1} \vert \to 0$ and $\vert B_{n,2}\vert \to 0,$\mk
\item[•] $0<\mathcal{L}^d (\limsup_{n\rightarrow+\infty}B_{n,2})<1,$\mk
\item[•]  $\limsup_{n\rightarrow+\infty}R_{n,2}=\emptyset$, where $R_{n,2}\subset B_{n,2}$ is an open rectangle associated with $\boldsymbol{\tau_2}$,\mk
\item[•] $\limsup_{n\rightarrow+\infty}B_{n,1}\subset [0,1]^d \setminus \limsup_{n\rightarrow+\infty}B_{n,2}$ and $$\mathcal{L}^{d}(\limsup_{n\rightarrow+\infty}B_{n,1})=1-\mathcal{L}^d (\limsup_{n\rightarrow+\infty}B_{n,2}).$$ For any $n\in\mathbb{N}$, denote by $R_{n,1}\subset B_{n,1}$  an open rectangle associated with $\boldsymbol{\tau_1}.$ 
\end{itemize} 
Those properties implies that that any sequence $(B_n)_{n\rightarrow+\infty}$ corresponding to the family $\left\{B_{n,i}\right\}_{n\in\mathbb{N}, i\in\left\{1,2\right\}}$ is weakly redundant and satisfies $\mathcal{L}^d (\limsup_{n\rightarrow+\infty}B_n)=1.$

  Again, the smallest real number such that the condition of Theorem \ref{rectssmajo} holds is $s_2$, the largest real number such that the condition of Theorem \ref{zzani} holds is $s_1$ but this time, $\dim_H (\limsup_{n\rightarrow+\infty}U_n)=s_1.$

\end{remarque}

\section{Conclusion and perspectives}

\label{conclu}
 The properties stated in Theorem \ref{theoremextra} (the prescribed measure and the weak redundancy) are of course non exhaustive and maybe more can be imposed to well chosen subsequences of $\mu$-.a.c sequences of balls.  It turns out that in the quasi-Bernoulli case it is enough to get condition under which the lower-bound found in \cite{ED1} is also an upper-bound , but it is likely that in some other cases, one needs to ask the sequence to verify more properties to ensure the sharpness of a certain lower-bound. In particular, it can be proved that, under very weak hypothesis on a $\mu$-a.c sequence $(B_n)_{n\in\mathbb{N}}$, given a $G_{\delta}$ set of full measure $G$, it is possible to assume (up to an extraction) that the set $G^{\prime}=\limsup_{n\rightarrow+\infty}B_n$ is a $G_{\delta}$ set of full measure with $G^{\prime}\subset G$ (so that one can assume that $\limsup_{n\rightarrow+\infty}B_n$ is always included in any $G_{\delta}$ of full measure if needed). 

\mk

\bibliographystyle{plain}
\bibliography{bibliogenubi}
\end{document}